\newtheorem{condition}[theorem]{Condition}
\def\Hneg#1{\widetilde{H}^{-#1}}
\newcommand{\Abs}[1]{\left|#1\right|}
\newcommand{\abs}[1]{\lvert#1\rvert}
\newcommand{\innprd}[2]{\left< #1 , #2 \right>}
\newcommand{\binnprd}[2]{\left( #1 , #2 \right)}
\def\ge{\geqslant}
\def\le{\leqslant}
\def\norm#1{\left|\!\left| #1 \right|\!\right|}
\def\nnorm#1{|\!| #1 |\!|}
\def\op#1{{\mathcal #1}}
\def\vect#1{{\bf #1}}
\def\enorm#1{|\!|\!| #1 |\!|\!|}
\def\thref#1{\textnormal{(\ref{#1}\textnormal)}}
\title{Multigrid preconditioning of linear systems \\for interior point methods
applied to a class of box-constrained optimal control problems}
\author{Andrei Dr{\u{a}}g{\u{a}}nescu\thanks{Department 
    of Mathematics and Statistics, University of Maryland, Baltimore
    County, 1000~Hilltop Circle, Baltimore, Maryland 21250 ({\tt draga@umbc.edu}). The work of this author was supported 
    in part by the Department of Energy under contract no. DE-SC0005455, and by the National Science Foundation under awards
    DMS-1016177 and DMS-0821311.}
  \and{Cosmin Petra\thanks{Mathematics and Computer Science Division,
      Argonne National Laboratory, 9700 S Cass Avenue, Argonne, IL 60439 ({\tt petra@mcs.anl.gov}).
      The work of this author was supported in part by the National Science Foundation under award CCF-0728878.}}}
\begin{document}

\maketitle

\begin{abstract} In this article we construct and analyze multigrid 
preconditioners for discretizations of operators of the form
${\mathcal D}_{\lambda}+{\mathcal K}^*{\mathcal K}$, where $D_{\lambda}$ is the
multiplication with a relatively smooth function $\lambda>0$ and
${\mathcal K}$ is a compact linear  operator. These systems arise when
applying interior point methods  to the minimization problem
$\min_{u} \frac{1}{2}(|\!|{\mathcal K} u-f|\!|^2 +\beta|\!|u|\!|^2)$ with
box-constraints $\underline{u}\leqslant u\leqslant\overline{u}$ on the controls.
The presented preconditioning technique is  closely related to the one
developed by Dr{\u a}g{\u a}nescu and Dupont  in~\cite{MR2429872} for
the associated unconstrained problem, and is intended for large-scale
problems.  As in~\cite{MR2429872}, the quality of the resulting
preconditioners is shown to increase as $h\downarrow 0$,
%at a rate that is optimal with respect to $h$  if the meshes are uniform
but decreases as the smoothness of $\lambda$ declines.  We test this
algorithm first on a Tikhonov-regularized backward parabolic equation
with box-constraints on the control, and then on a standard elliptic-constrained optimization
problem.  In both cases it is shown that the number of linear
iterations per optimization step, as well as the total number of
fine-scale matrix-vector multiplications is decreasing with increasing
resolution, thus showing the method to be potentially very efficient
for truly large-scale problems.
\end{abstract}

\begin{keywords} multigrid, interior point methods, PDE-constrained optimization
\end{keywords}

\begin{AMS} 65M55, 90C51, 65K10, 65M32, 90C06 
\end{AMS}

\pagestyle{myheadings}
\thispagestyle{plain}
\markboth{A.~DR{\u A}G{\u A}NESCU AND C.~PETRA}{MULTIGRID PRECONDITIONING FOR 
IPMs}

%-----------SECTION-----------%
\section{Introduction} 
\label{mgimp:sec:intro}
%--------------------------%
In this work we present a multigrid preconditioning technique for solving linear
systems arising when applying interior point methods to the
control-constrained optimal control problem
\begin{eqnarray}
\label{mgipm:eq:maineq}
    \textnormal{minimize}\ \  \op{J}_{\beta}(u)\stackrel{\mathrm{def}}{=}
      \frac{1}{2}\nnorm{\op{K} u-f}^2 +  \frac{\beta}{2}\nnorm{u}^2,\ \ u\in \op{U}_{\mathrm{ad}}\ ,
\end{eqnarray}
where $\Omega\subset \mathbb{R}^d$ is a bounded domain, $\beta>0$, $\op{K}:L^2(\Omega)\rightarrow L^2(\Omega)$ is 
a linear, compact operator, and the set of admissible solutions is given by
$$ \op{U}_{\mathrm{ad}} = \{u\in L^2(\Omega)\::\: 
\underline{u} \le u \le \overline{u}\ \  a.e.\},
$$ with $\underline{u}, \overline{u}\in L^{2}(\Omega)$, and $\underline{u} \le \overline{u}$.
The following examples form the main motivation for the present work:\\
\indent \emph{Example} A: {\bf Box-constrained time-reversal for a parabolic equation.}
Consider the linear parabolic initial value problem
\begin{equation}
\label{eq:parabolic_gen1}
\left\{
\begin{array}{lll}
\partial_t y  + \op{A} y = 0 &,&\ \ \mbox{on}\; \Omega\times (0,
\infty)\ , \\ y(x,t)= 0&,&\ \ \mbox{on}\; \partial\Omega\times (0,
\infty)\ , \\ y(x, 0) =u(x)&,&\ \ \mbox{for}\; x\in\Omega\ , \\
\end{array}
\right .
\end{equation}
where $\op{A}$ is a linear elliptic operator, and denote the solution
map by $\op{S}(t) u\stackrel{\mathrm{def}}{=}y(\cdot,t)$. 
To formulate a control problem we define $\op{K}=\op{S}(T)$, where $T>0$
is a fixed ``end-time'' $T>0$; the resulting optimization problem is controlled
by the initial value $u$.
Note that if the box constraints in~\eqref{mgipm:eq:maineq} are left out, i.e.,
$\op{U}_{\mathrm{ad}}=L^2(\Omega)$,  then~\eqref{mgipm:eq:maineq} is
the Tikhonov-regularized formulation of the  ill-posed problem
\begin{equation}
\label{eq:illposedgen}
\op{K} u = f\ .
\end{equation}
\indent \emph{Example} B: {\bf Elliptic-constrained distributed optimal control
problem.}\\  In this example we let  $$\op{K}=\Delta^{-1}\ ,$$ where
$\Delta$ is the Laplace operator acting on $H_0^1(\Omega)$. In this
case  the problem~\eqref{mgipm:eq:maineq} is usually formulated  as
the PDE-constrained optimization problem (e.g., see Borzi and
Schulz~\cite{MR2505585})
\begin{eqnarray}
\label{mgipm:eq:lap}
  \begin{array}{cc}\vspace{7pt}
    \textnormal{minimize}& \frac{1}{2}\norm{y-f}^2 +
    \frac{\beta}{2}\norm{u}^2\\   \textnormal{subj. to\ \ } &\Delta
    y=u\ ,\ y\in H_0^1(\Omega),\ \underline{u} \le u \le \overline{u}\
    \  a.e.\  \mathrm{in}\ \Omega.
  \end{array}
\end{eqnarray}

Our primary motivation is rooted in  solving large-scale inverse
problems like the one in Example~A, which is a simplified version of 
the problem considered by Ak{\c c}elik et
al. in~\cite{Bart_Omar:SC05a}. There the question was to identify the
initial concentration $u=y(\cdot,0)$ of a contaminant released in the atmosphere
in a given geographic area (the Los Angeles Basin) given later-time measurements at various
fixed locations. The spatio-temporal evolution of the concentration $y$ of the contaminant is
assumed, like in Example A, to be  governed by an advection-diffusion equation
with known wind-velocities and contaminant diffusivity
that can be formulated as~\eqref{eq:parabolic_gen1}. In Example~A we consider the case
where the measurements are taken at all points in space but  at a
single moment in time $T>0$, therefore the data $f$ is the entire state at time $T$. 
The problem thus becomes to invert a compact operator which, as is 
well known, is not continuously invertible. As a result, a naive approach to
inverting $\op{K}$ is unstable, in that small perturbations in the ``measurements'' 
$f$ result in exponentially large errors in the solution. If the ``exact'' measurements 
$f$ are resulted from applying $\op{K}$ to a ``true'' initial value $u$, that is, $f=\op{K}u$, then various regularization
techniques~\cite{MR97k:65145} are employed so that  the computed solution
$u_{\delta}$ of  the $\delta$-perturbed problem $\op{K} u_{\delta}=f_{\delta}$ (where 
$\nnorm{f-f_{\delta}}\le \delta$) converges to the ``true'' solution $u$
as $\delta\downarrow 0$, the most commonly  used being the
Tikhonov-regularization. One issue not resolved by the Tikhonov regularization is that
the solution $u_{\delta}$ of the regularized problem may exhibit non-physical, or otherwise qualitatively incorrect behavior: 
for example, if the concentration needs to have values in $[0,1]$, it is well known that $u_{\delta}$ may exceed these
limits. In addition, if the true initial
contamination event is localized, then $u_{\delta}$ oscillates around zero, and is not localized.
However, if explicit constraints  $\underline{u}=0$, $\overline{u}=1$  are
set, then the solution is  physically relevant, and is also localized. In Figure~\ref{fig:numerical_sol2} we show the solution of the
inverse problem where the target solution (true initial concentration) is represented by two
localized sources of contamination, with one dominating the other in
size.  The solution of the unconstrained problem (with $\delta$ down to roundoff error and $\beta$ optimized)
does not show two
sources of  pollution, unlike the solution of the constrained problem,
from which one can clearly identify two separate components in the
support of the initial value.  In addition, in the presence of
nonlinearities, with reaction terms that are sensitive  to
the sign of their arguments,  the importance of physically meaningful
box constraints cannot  be overstated. A similar situation is
encountered in image-deblurring, where  the target (gray-scale)
solution takes values in $[0,1]$; if these boundaries are strictly
enforced, then the quality of inversion (deblurring) is significantly
improved~\cite{MR1273034}.
\begin{figure}[!htb]
\begin{center}
        \includegraphics[width=5.5in]{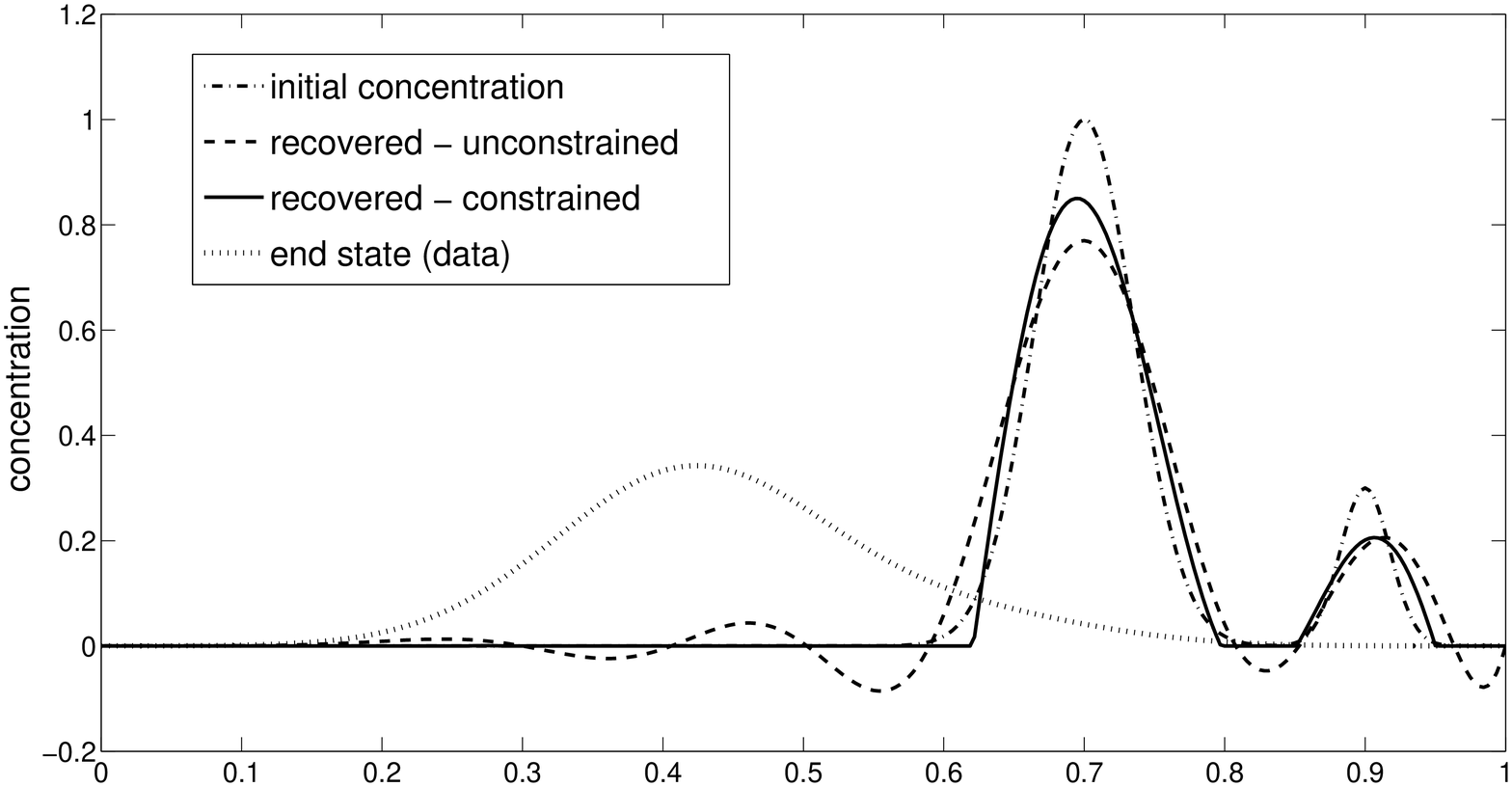}%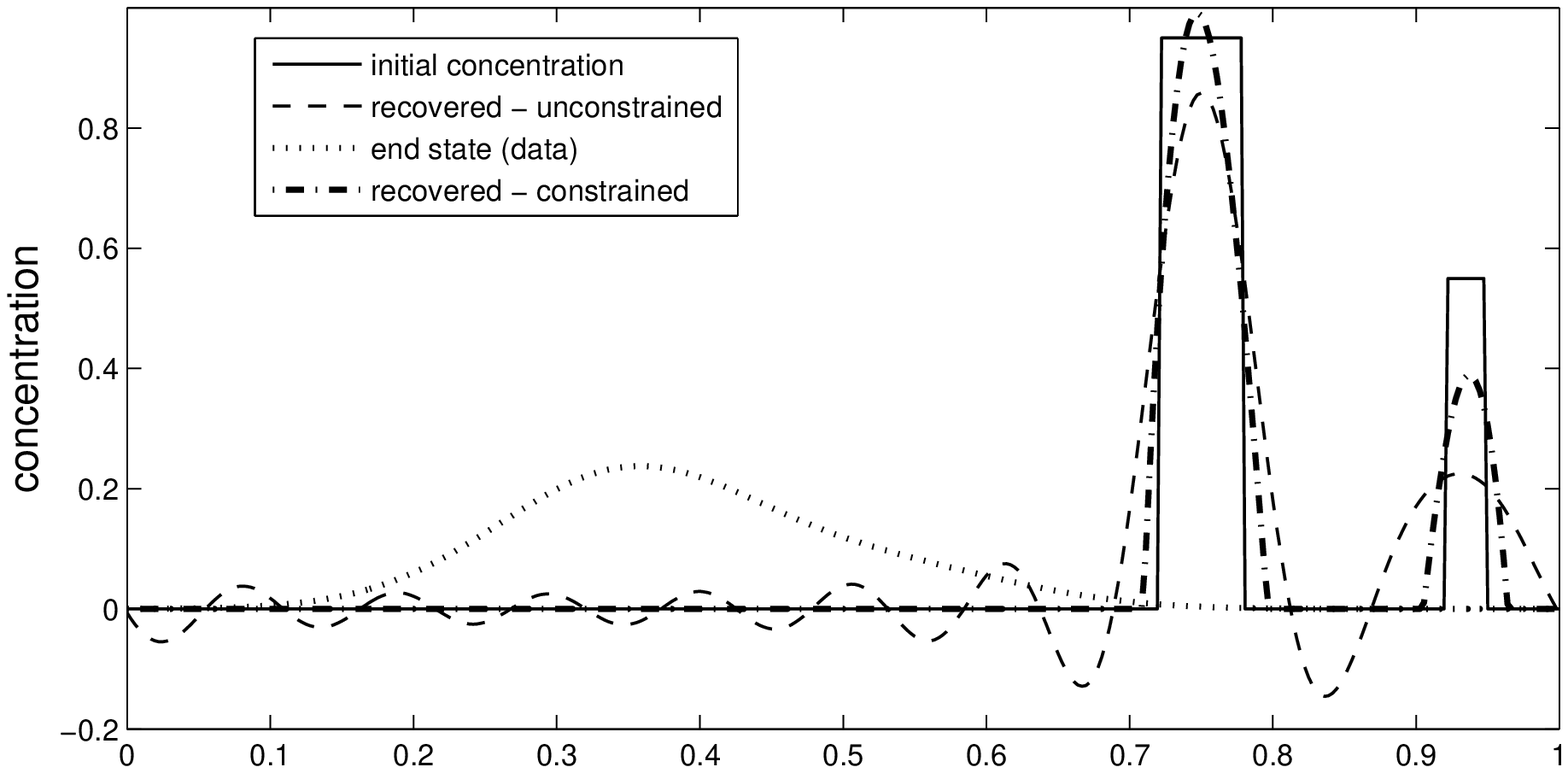
\caption{Solution of the constrained vs. unconstrained inverse
problem, with the target solution {\textnormal(}initial concentration{\textnormal)} being a smooth function
supported on two disjoint intervals.}
\label{fig:numerical_sol2}
\end{center}
\end{figure}

The unconstrained optimization problem, where 
$\op{U}_{\mathrm{ad}}=L^2(\Omega)$ in~\eqref{mgipm:eq:maineq},
ultimately reduces to solving the normal equations
\begin{equation}
\label{eq:normaleq}
(\beta I+\op{K}^*\op{K})u=\op{K}^*f\ .
\end{equation}
For compact operators these systems either represent or resemble very
well integral equations of the second kind.  Starting with the works
of Hackbusch~\cite{hackbusch81} (see also~\cite{MR1350296})
much effort  has been devoted
to developing efficient multigrid methods for
solving~\eqref{eq:normaleq}, e.g., see~\cite{MR1151773, MR97k:65299,
MR2001h:65069, MR1986801, MR2421947, MR2429872} and the references
therein. We should point out that multigrid methods were originally developed
for solving elliptic equations~\cite{MR814495, MR2001h:65002}, and later significant
efforts were devoted to extending these methods to other important differential 
equations such as advection-diffusion and the Navier-Stokes equations (e.g., see~\cite{MR2155549} and
the references therein). For elliptic equations, typically  
the goal is to reduce the condition number of the discrete system from $O(h^{-2})$ 
to $O(1)$, which results in a solution process that solves the equation
in a number of iterations that is mesh-independent. However, the 
discrete version of~\eqref{eq:normaleq} has a condition number which is $O(\beta^{-1})$,
with the bound being independent of $h$. Moreover, even for $\beta=0$, conjugate gradient (used as a regularizer)
already solves~\eqref{eq:normaleq} in a mesh-independent number of 
iterations~\cite{MR97k:65145}; in other words, mesh-independence is nearly effortless for integral equations.
Instead, for systems like~\eqref{eq:normaleq} multigrid is used
to further reduce the condition number of the preconditioned system. For example, 
in~\cite{MR2429872} it is shown that by using specially designed multigrid preconditioners, the preconditioned version 
of~\eqref{eq:normaleq}
has a condition number bounded by $O(h^2/\beta)$, the consequence of which
 is interesting at least from a theoretical point of view: if
$\beta>0$ is kept fixed  (which is normally not the case in practical
applications)  the number of iterations required to solve the problem
{\bf decreases with $h\downarrow 0$} to the point where only one iteration would be enough to solve the problem
with sufficient accuracy.  This fact, already known to
Hackbusch~\cite{hackbusch81}, constitutes a departure from the usual
multigrid framework, where, as stated before, the goal is to achieve mesh-independence ({\bf bounded} number of iterations as $h\downarrow 0$). 
%However, this behavior of multigrid for integral equations should not
%be surprising, since unpreconditioned conjugate gradient already
%solves~\eqref{eq:normaleq} in a mesh-independent number of 
%iterations~\cite{MR97k:65145}. 
Hence, as a result of specific multigrid preconditioning, 
the solution process for the unconstrained problem~\eqref{eq:normaleq} requires fewer and fewer fine-scale \emph{matrix-vector multiplications}
(mat-vecs)
as $h\downarrow 0$. The main contribution of the
present work is to show  that such performance can also be achieved  in
the presence of explicit  box constraints on the control, as
formulated in~\eqref{mgipm:eq:maineq}.

Multigrid methods have long been associated with solving large-scale
problems, and  beginning with the work of Hackbusch~\cite{MR525024,
MR600205}, and especially over the last decade, significant efforts were
concentrated on devising efficient multilevel methods for optimal
control  problems~\cite{MR2038939, MR2160699, MR2123799, MR2395267,
MR2491821, benhabtar09}. A more detailed discussion of the subject and many references 
can be found in the recent
review article by Borzi and Schulz~\cite{MR2505585}.  While most --
not all -- of the aforementioned articles discuss unconstrained
problems, the addition of box constraints  pose additional
challenges associated with the presence of the
Lagrange multipliers related to the inequality-constraints,
which in general are less regular than the solutions.  Optimization methods for such
problems with bound constraints typically fall in one of two  categories:
active-set type strategies, especially \emph{semismooth Newton methods} (SSNMs) and \emph{interior point
methods} (IPMs).
Over the last decade both IPMs and SSNMs have consistently attracted the attention 
of the scientific community due to their proven efficiency in solving 
distributed optimal control problems with PDE constraints. Both strategies lead
to superlinear local convergence~\cite{MR1776662, MR2421314, MR1972219}
and lend themselves to analysis both in a finite dimensional setting and in function
space~\cite{MR1972217, MR2193506} (see also \cite{MR2516528}), which is a critical stepping stone
to proving mesh-independence~\cite{ MR2085262}. 
Each of IPMs and SSNMs consists of  an outer
iterative process that further requires solving one or two linear systems  at
each outer iteration. For large-scale problems the solution of these linear 
systems often becomes the bottleneck of the computation.
In terms of their linear algebra needs, IPMs and SSNMs
 exhibit significant differences and require
separate treatment. For SSNMs, the linear systems involve a subset of unknowns
and equations of the Hessian of the cost functional, while for IPMs 
the systems have the same structure as the systems arising in the 
unconstrained problem, but contain additional terms on the diagonal which
usually are a source of extreme ill-conditioning. The question of efficient multigrid
preconditioning of the linear systems arising in the semismooth Newton
solution process is the subject of current research~\cite{Dra:promise}.

The focus of the current work addresses  the question of
efficient multigrid preconditioning of linear systems arising in the IPM
solution process. 
Multigrid and IPMs
have been shown to work well together for elliptic variational
inequalities (obstacle problems)~\cite{MR2038939}, and for some
classes of problems where the Hessian of the cost functional is
elliptic~\cite{benhabtar09}.  Instead, the Hessian of the cost
functional in~\eqref{mgipm:eq:maineq} is a compact operator and
requires a significantly different approach.

In this article we treat  $\op{K}$ and its discretizations as
black-box operators, and we employ the {\bf first-discretize-then-optimize} strategy. 
We apply specific primal-dual IPMs to the discrete version 
of~\eqref{mgipm:eq:maineq}, and we develop  multigrid preconditioners
for the linear systems arising at each IPM iterate. 
As shown in Section~\ref{mgimp:sec:ipm}, if the discrete optimal control 
problem is formulated appropriately, then the linear systems to be solved 
involve matrices of the form $\vect{D}+\vect{K}^*\vect{K}$, where
$\vect{D}$ is a diagonal matrix, $\vect{K}$ is the discrete representation of 
$\op{K}$, and $\vect{K}^*$ is the adjoint of $\vect{K}$ with respect to a 
certain discrete inner product. When using standard finite elements, the matrices
$\vect{K}, \vect{K}^*$ are dense; consequently, for large-scale problems, 
they cannot be formed and the systems are solved using iterative 
methods. Since residuals can be computed at the equivalent cost of two applications of~$\vect{K}$, 
residual computation is expected to be very expensive,
therefore  efficient preconditioners are critical 
for minimizing the number of necessary mat-vecs.
Our strategy in this work is to adapt the multigrid techniques developed in~\cite{MR2429872}, 
where the matrix $\vect{D}$ had the form $\beta \vect{I}$. To analyse the resulting multigrid 
preconditioner we interpret $\vect{D}$ as being the discretization of an operator $\op{D}_{{\lambda}}$, where
$(\op{D}_{{\lambda}}u)(x) = \lambda(x) u(x)$ is the operator representing
pointwise multiplication of a function $u$ with  a smooth function\footnote{This can always be accomplished, $\lambda$ can be any smooth (here $C^2$ is sufficient)
interpolant of the discrete function representing the diagonal of $\vect{D}$.} $\lambda$. From a technical perspective, 
our main  accomplishment consists of showing that the
operators of the type $\op{K} \op{D}_{{\lambda}}$ together with
their discretizations satisfy a set of smoothing
conditions shown
in~\cite{MR2429872} to be sufficient for the multigrid preconditioner
to  have the desired qualities. 
%Therefore, our first goal is to discretize~\eqref{mgipm:eq:maineq} in such a way that the discrete
%systems maintain the above form, because these types of systems render themselves to good multigrid preconditioning. 

For simplicity and concreteness we restrict most of our study to the
two-dimensional case, and we consider a standard finite element
discretization for $\op{K}$ using triangular elements and continuous piecewise linear functions. As will result from the analysis, these techniques
can be easily generalized to three dimensions and rectangular
elements, however, the extension to higher degree finite elements is not obvious and
forms the subject of current research. 

This article is organized as follows: After formally introducing the
discrete optimization problem in Section~\ref{mgipm:sec:problem}, we
discuss the specific linear algebra requirements of the interior point methods in
Section~\ref{mgimp:sec:ipm}. Section~\ref{mgipm:sec:two_grid} is
central to this work as it presents the analysis of the two-grid
preconditioner, the main result being Theorem~\ref{mgipm:th:twogrid}.
In Section~\ref{mgipm:sec:multigrid} we develop a multigrid
preconditioner that preserves the qualities of the two-grid preconditioner.
Further, we apply the methods to Examples A and B in
Section~\ref{mgimp:sec:num_examples} and show some numerical results. 
%We end with some concluding remarks.

%-----------SECTION-----------%
\section{Notation and discrete problem formulation} 
\label{mgipm:sec:problem}
%--------------------------%
Throughout this paper we shall denote by  $W_p^m(\Omega), H^m(\Omega),
H_0^m(\Omega)$ (with $p\in[1,\infty], m\in \mathbb{N}$) the standard
Sobolev spaces, while \mbox{$\nnorm{\cdot}$} and
$\innprd{\cdot}{\cdot}$ are the $L^2$-norm and inner product,
respectively.  Let $\Hneg{m}(\Omega)$ be the dual (with respect to the
$L^2$-inner product) of $H^m(\Omega)\cap H^1_0(\Omega)$ for $m> 0$.
If $X$ is a Banach space then $\mathfrak{L}(X)$ denotes the space of bounded
linear operators on $X$. We regard square $n\times n$ matrices as operators in
$\mathfrak{L}(\mathbb{R}^n)$ and we write matrices and vectors using bold font. If $\vect{A}$ is a symmetric positive definite matrix, 
we denote by $\binnprd{\vect{u}}{\vect{v}}_{\vect{A}}=\vect{v}^T \vect{A} \vect{u}$ 
the $\vect{A}$-dot product of two vectors $\vect{u}, \vect{v}$, and by
$\abs{\vect{u}}_{\vect{A}} = \sqrt{\binnprd{\vect{u}}{\vect{u}}_{\vect{A}}}$ 
the $\vect{A}$-norm; if $\vect{A}=\vect{I}$ we drop the subscript from the inner product
and norm. The space of $m\times n$ matrices is denoted by $M_{m\times n}$; if $m=n$ we write
$M_{n}$ instead of $M_{m\times n}$. Given some norm $\nnorm{\cdot}_s$ on a vector
space $\op{X}$, and $T\in\mathfrak{L}(\op{X})$, we denote by $\nnorm{T}_s$ the induced operator-norm
$$\nnorm{T}_s = \sup_{u\in \op{X},\ \nnorm{u}_s = 1}\nnorm{T u}_s\ .$$
Consequently, if $T\in\mathfrak{L}(L^2(\Omega))$ then $\nnorm{T}$ (no subscripts) is the $L^2$ operator-norm
of~$T$. If $\op{X}$ is a  Hilbert space and $T\in \mathfrak{L}(\op{X})$ then
$T^*\in \mathfrak{L}(\op{X})$ denotes the adjoint of $T$.

We assume that  $\Omega\subset \mathbb{R}^2$ is a bounded, polygonal domain and that $\op{K}$
is discretized using continuous  piecewise linear functions on
triangular elements. We consider the usual multigrid framework where the operator
is discretized at several resolutions. Let~${\mathcal T}_{h_0}$ be a triangulation of the domain $\Omega$, and
define ${\mathcal T}_{h/2}$ inductively to be the Goursat
refinement of ${\mathcal T}_{h}$ for all $h\in I$ with
$$I= \{h_0/2^i: i=0, 1, 2,\dots\}\ ,$$ 
where each triangle in $T\in {\mathcal T}_{h}$ is cut along
the three lines obtained by joining the midpoints of its edges.  Note
that $({\mathcal T}_{h})_{h\in I}$ is a  quasi-uniform
triangulation and the usual approximations hold~\cite{MR2373954}. We define
\begin{eqnarray*}
%\label{def:vhlin}
{\mathcal V}_h& =& \{u\in {\mathcal C}(\overline{\Omega})\ :\  \forall
T\in {\mathcal T}_{h}\ \ u|_T\  \mathrm{is\ linear}, \ \mathrm{and}\ u|_{\partial \Omega} \equiv 0
\}\ ,
\end{eqnarray*}  
so that $\ {\mathcal V}_{h/2} \subset {\mathcal V}_{h} \subset H_0^1(\Omega)$. 
We should note that zero-boundary conditions for the controls are consistent with the examples considered, 
and present a convenient framework for the analysis, but alternate 
boundary conditions can be considered. For Example~A,  setting $u$ to be zero on $\partial\Omega$ is natural, while
in Example~B  the boundary values of the control do not enter the discrete problem unless higher order cubatures
are used in the discretization. 
%Additional, problem-dependent boundary conditions may further be imposed, although
%they do not play an active role for either of the two applications considered: Example~A calls
%for zero-boundary conditions, while in Example~B boundary values of the control do not enter
%the problem.
If $N_h=\mathrm{dim}(\op{V}_h)$ and $P^h_1, \dots, P^h_{N_h}$ are the nodes of $\op{T}_h$ that lie in the interior of 
$\Omega$ let $\op{I}_h:\op{C}(\Omega)\rightarrow \op{V}_h$  be the standard interpolation operator 
$$\op{I}_h(u)=\sum_{i=1}^{N_h} u(P^h_i)\varphi_i^h\ ,$$ 
where $\varphi_i^h, i=1,\dots, N_h$ are the standard nodal basis functions. 
Given a family of positive weight-functions $(w_h)_{h\in I}\subset\op{V}_h$ 
we define the mesh-dependent inner products
\begin{eqnarray*}
\label{def:meship}
\innprd{u}{v}_h = \sum_{i=1}^{N_h} w_h(P^h_i)\: u(P^h_i) v(P^h_i),\ \ \mathrm{for}\
u, v\in {\mathcal V}_h\ ,
\end{eqnarray*} 
and let $\enorm{u}_h = \sqrt{\innprd{u}{u}_h}$. 
In order to satisfy $\innprd{\cdot}{\cdot}_h \approx \innprd{\cdot}{\cdot}$ as close as possible we replace 
exact integration on each triangle $\Delta P_1 P_2 P_3$ with the cubature
$$
\int_T f(x) dx \approx Q(f)=\frac{\mathrm{area}(T)}{3} \sum_{i=1}^3 f(P_i)\ .
$$
This defines the 
weight functions $w_h$ 
\begin{equation}
\label{eq:weighdef}
w_h(P^h_i) = \frac{1}{3}\sum_{P_i^h\in T\in\op{T}_h} \mathrm{area}(T)\ .
\end{equation}
Since the cubature $Q$ is exact for linear functions~\cite{MR0327006} we have
\begin{eqnarray*}
%\label{eq:weighdef}
\innprd{u}{v}_h = \int_{\Omega} \op{I}_h (u v),\ \mathrm{for\ all\ }u, v\in\op{V}_h\ .
\end{eqnarray*}
Moreover, if the grids are  quasi-uniform, then $h^{-2}w_h$ are uniformly bounded and bounded away from
$0$ with respect to $h\in I$, therefore by Lemma 6.2.7 in~\cite{MR2373954} there exist positive constants $C_1, C_2$
independent of $h$ such that
\begin{equation}
\label{eq:equivmeshipl2}
C_1\nnorm{u}\le \enorm{u}_h\le C_2\nnorm{u},\ \forall u \in\op{V}_h\ .
\end{equation}
We should point out that the norm-equivalence~\eqref{eq:equivmeshipl2}
extends to operator norms for operators in $\mathfrak{L}(L^2(\Omega))$, which allows us to 
interchange $\nnorm{T}$ with $\enorm{T}_h$ when needed as long as we factor in a 
mesh-independent constant.

We assume that for each $h\in I$ is given  a natural discretization  $\op{K}_h\in
\mathfrak{L}(\op{V}_h)$ of $\op{K}$, so that $\op{K}, \op{K}_h$ satisfy 
the \emph{Smoothed Approximation Condition} (SAC):
%%---------------------------------------%
\begin{condition}[SAC]
\label{mgipm:cond:condsmooth} An operator $\op{M}$ together with its discretization $\op{M}_h$ is said to satisfy
the Smoothed Approximation Condition if there exists a constant $C(\op{M})$ depending on $\op{M}, 
\Omega, \op{T}_{h_0}$ and independent of $h$
 so that 
\begin{enumerate}
\item[{\bf [a]}] smoothing:
\begin{equation}
\label{mgipm:cond:par_smooth}
\nnorm{\op{M} u}_{H^m(\Omega)} \le C(\op{M}) \norm{u},\ \ \forall u\in L^2(\Omega),\  m=0, 1, 2\ ;
\end{equation}
\item[{\bf [b]}] smoothed approximation:
\begin{equation}
\label{mgipm:cond:consist}
\nnorm{\op{M} u - \op{M}_h u}_{H^m(\Omega)}  \le C(\op{M}) h^{2-m}\norm{u}
\ \ \forall u\in {\mathcal V}_h,\  m=0, 1,\ h\in I\ .
\end{equation}
\end{enumerate}
\end{condition}
%%---------------------------------------%
%
Given two discrete functions $\underline{u}_h, \overline{u}_h\in\op{V}_h$ 
representing $\underline{u}, \overline{u}$, respectively, we now define the discrete
optimization problem
\begin{eqnarray}
\label{eq:maineqdiscrete}
    \textnormal{minimize}\ \  \op{J}^h_{\beta}(u)\stackrel{\mathrm{def}}{=}
      \frac{1}{2}\enorm{\op{K}_h u-f_h}_h^2 +  \frac{\beta}{2}\enorm{u}_h^2,\ \ u\in \op{U}^h_{\mathrm{ad}}\ ,
\end{eqnarray}
where $f_h\in\op{V}_h$ represents $f$ and the set of discrete admissible solutions is given by
$$ \op{U}^h_{\mathrm{ad}} = \{u\in \op{V}_h\::\: 
\underline{u}_h(P_i^h) \le u(P_i^h) \le \overline{u}(P_i^h)\ \  \forall i=1,\dots, N_h\}\ .
$$
The formulation~\eqref{eq:maineqdiscrete} needs a few comments. First we remark that the use of the discrete
norm $\enorm{\cdot}_h$ instead of $\norm{\cdot}$ is essential in order for  the linear systems  to be solved 
at each outer iteration to have a form amenable to efficient multigrid preconditioning. In other words, we have chosen
a discretization that allows for an efficient solution process. 
%Second, the choice of weights is important not only for
%convergence of the discrete solution of~\eqref{eq:maineqdiscrete} to that of~\eqref{mgipm:eq:maineq}, but also for the efficiency of the multigrid algorithm. 
Second, if $\underline{u}$ and $-\overline{u}$ are convex and continuous (e.g., when they are constant), 
then  the choice 
\begin{equation}
\label{eq:discrboundsdef}
\underline{u}_h = \op{I}_h(\underline{u}),\  \ \ \overline{u}_h=\op{I}_h(\overline{u})
\end{equation}
implies
$\op{U}^h_{\mathrm{ad}}\subset \op{U}_{\mathrm{ad}}$. This construction can be easily generalized to three dimensions and/or
tensor-product finite elements. 

We obtain a matrix formulation of~\eqref{eq:maineqdiscrete} by
representing  all vectors and operators using the standard nodal basis
functions $\varphi_i^h, i=1,\dots, N_h$.  More precisely, if we define
$T:\mathbb{R}^{N_h}\rightarrow \op{V}_h$ by
$$T(\vect{u}) = \sum_{i=1}^{N_h} u_i \varphi_i^h,\ \ \mathrm{where}\ \vect{u}=[u_1,\dots, u_{N_h}]^T\ ,$$
then the matrix \mbox{$\vect{K}_h=T^{-1}\op{K}_h T$}, regarded as an operator in $\mathfrak{L}(\mathbb{R}^{N_h})$,
 represents $\op{K}_h$ with respect to the nodal basis. 
If $\vect{W}_h$ is the diagonal matrix with diagonal entries $(w_h(P_i^h))_{1\le i\le N_h}$, and 
$\vect{f}_h, \underline{\vect{u}}_h, \overline{\vect{u}}_h$ represent $f_h, \underline{u}_h, \overline{u}_h$ respectively, 
then~\eqref{eq:maineqdiscrete} is equivalent to 
\begin{eqnarray}
\label{eq:maineqdiscretevect}
    \textnormal{minimize}\ \  J_{\beta}(\vect{u})\stackrel{\mathrm{def}}{=}
      \frac{1}{2}\abs{\vect{K}_h\vect{u}- \vect{f}_h}_{\vect{W}_h}^2 +  \frac{\beta}{2}
\abs{\vect{u}}_{\vect{W}_h}^2,\ \ \underline{\vect{u}}_h\le \vect{u}\le\overline{\vect{u}}_h\ ,
\end{eqnarray}
where the inequality $\vect{u}\le \vect{v}$ between vectors is meant coordinate-wise. When operating 
on a single grid we will omit the subscript $h$ for matrices and vectors.

Existence and uniqueness of solutions for both~\eqref{mgipm:eq:maineq} and~\eqref{eq:maineqdiscrete}  follows from 
the fact that $\op{J}_{\beta},~\op{J}^h_{\beta}$ are uniformly convex and $\op{U}_{\mathrm{ad}},~\op{U}^h_{\mathrm{ad}}$
are convex sets (e.g., see Theorem~1.43 in~\cite{MR2516528}). Furthermore, cf. Lemma 1.12 in~\cite{MR2516528},
the solution $\widehat{u}$ of~\eqref{mgipm:eq:maineq} is characterized by the following condition: there exist
$\underline{\lambda}, \overline{\lambda}\in L^2(\Omega)$ so that
\begin{equation}
\label{eq:lambdacharsol}
\left\{
\begin{array}{l}\vspace{5pt}
(\beta I  + \op{K}^*\op{K}) \widehat{u} + \overline{\lambda} - \underline{\lambda} = \op{K}^* f\ ,\\\vspace{5pt}
\widehat{u}\ge \underline{u},\hspace{10pt} \underline{\lambda}\ge 0,\hspace{10pt}  \underline{\lambda}(\widehat{u}-\underline{u})=0\ \ a.e.,\\
\widehat{u}\le \overline{u},\hspace{10pt} \overline{\lambda}\ge 0,\hspace{10pt}  \overline{\lambda}(\overline{u}-\widehat{u})=0\ \  a.e.
\end{array}
\right .
\end{equation}

%\input{convergence}
%-----------SECTION-----------%
\section{Interior point methods and linear systems}
\label{mgimp:sec:ipm}
%--------------------------%
In this section we briefly discuss the specifics of the interior point method
we use for solving the discrete optimization problem~\eqref{eq:maineqdiscretevect},
and we describe in detail the linear systems that need to be solved at each outer
iteration. If we denote 
\begin{equation}
\label{eq:Adef}
\vect{A} \stackrel{\mathrm{def}}{=} \beta \vect{W}+\vect{K}^T  \vect{W}  \vect{K}\ ,
\end{equation}
then after a rearrangement of the terms in the objective function $J_{\beta}$
and dropping constant terms we write~\eqref{eq:maineqdiscretevect}
as a regular convex quadratic problem with affine constraints in  standard form:
\begin{eqnarray}
  \label{qp_form}
  \begin{array}{cl}\vspace{5pt}
    \textnormal{minimize}&  \frac{1}{2}\vect{u}^T\vect{A}\vect{u} - (\vect{K}^T  \vect{W}  \vect{f})^T \vect{u} \\
    \textnormal{subj to:\ \ } &\underline{\vect{u}}\le \vect{u} \le \overline{\vect{u}}\ .\\
  \end{array}
\end{eqnarray}
Since $\vect{W}$ is a diagonal matrix with positive entries, the matrix $\vect{A}$ is positive definite, therefore the above problem has a
unique solution (see \cite{NocedalWright:book}, p. 320).
Let us denote the Lagrangian corresponding to the QP \eqref{qp_form} by
\begin{eqnarray*}
  \label{eq:lagrangian}
    L(\vect{u},\vect{v}_1, \vect{v}_2) =
    \frac{1}{2} \vect{u}^T \vect{A} \vect{u} -
    (\vect{K}^T  \vect{W}  \vect{f})^T \vect{u} - \vect{v}_2^T (\overline{\vect{u}}-\vect{u})-\vect{v}_1^T ({\vect{u}}-\underline{\vect{u}}),
\end{eqnarray*} 
where $\vect{v}_1, \vect{v}_2$ are  vectors of non-negative multipliers corresponding to the inequality constraints.
Then the gradient and Hessian of the Lagrangian are given by
\begin{equation*}%\label{eq:lagrangianGrad}
\nabla L_\vect{u}(\vect{u}, \vect{v}_1, \vect{v}_2) =
\vect{A} \vect{u} - \vect{K}^T  \vect{W}  \vect{f} + \vect{v}_2- \vect{v}_1,\ \ \nabla^2 L_{\vect{u}\vect{u}}(\vect{u}, \vect{v})= \vect{A}\ .
\end{equation*}
Since the the linear independence constraint qualification holds, the unique solution~$\widehat{\vect{u}}$ of~\eqref{qp_form} satisfies
the Karush-Kuhn-Tucker (KKT) conditions
\begin{equation}\label{eq:KKT}
  \left\{
  \begin{array}{l}\vspace{5pt}
    \vect{A}\widehat{\vect{u}} + \widehat{\vect{v}}_2 - \widehat{\vect{v}}_1  = \vect{K}^T  \vect{W}  \vect{f} \\\vspace{5pt}
    \widehat{\vect{u}}\ge \underline{\vect{u}},\hspace{10pt} \widehat{\vect{v}}_1\ge 0,\hspace{10pt}  \widehat{\vect{v}}_1\cdot(\widehat{\vect{u}}-\underline{\vect{u}})=0\ ,\\
    \widehat{\vect{u}}\le \overline{\vect{u}},\hspace{10pt} \widehat{\vect{v}}_2\ge 0,\hspace{10pt}  \widehat{\vect{v}}_2\cdot(\overline{\vect{u}}-\widehat{\vect{u}})=0\ ,
  \end{array}
  \right .
\end{equation}
where $\widehat{\vect{v}}_1, \widehat{\vect{v}}_2$ are the 
multipliers, and $\vect{u}\cdot \vect{s}$ denotes the component-wise product.
Moreover, since $\nabla^2 L_{\vect{u}\vect{u}}$ is positive definite, the above KKT conditions are also sufficient.
The primal-dual IPM consists of solving the perturbed KKT system
\begin{equation}\label{eq:cp}
  \left\{
  \begin{array}{l}\vspace{5pt}
    \vect{A}{\vect{u}} + {\vect{v}}_2 - {\vect{v}}_1  = \vect{K}^T  \vect{W}  \vect{f}\ , \\\vspace{5pt}
    {\vect{u}} > \underline{\vect{u}},\hspace{10pt} {\vect{v}}_1 > 0,\hspace{10pt}
    {\vect{v}}_1\cdot({\vect{u}}-\underline{\vect{u}})=\mu \vect{e}\ ,\\
    {\vect{u}} < \overline{\vect{u}},\hspace{10pt} {\vect{v}}_2 > 0,\hspace{10pt}
    {\vect{v}}_2\cdot(\overline{\vect{u}}-{\vect{u}})=\mu \vect{e}\ ,
  \end{array}
  \right .
\end{equation}
whose one-parameter family of  solutions $(\widehat{\vect{u}}(\mu),\widehat{\vect{v}}_1(\mu), \widehat{\vect{v}}_2(\mu))$ defines the \emph{central path}. As usual,
$\vect{e}=[1,1,\dots,1]^T\in \mathbb{R}^{N_h}$. 
Practical IPM algorithms produce solutions that lie sufficiently close to the central path and converge rapidly to 
$(\widehat{\vect{u}},\widehat{\vect{v}}_1, \widehat{\vect{v}}_2)$.
An example of such method is Mehrotra's  predictor-corrector algorithm. Initially introduced for linear
programming~\cite{mehrotra92}, the method was successfully adapted to convex QPs and emerged in the last fifteen years as the (arguably) most practical and 
 efficient algorithm for this class of problems. For this project we used  Matlab to implement Mehrotra's method for
convex QPs from OOQP (see \cite{GerWri03:ooqp} for details).

To describe the method we first consider the the linear system defining the Newton direction
$(\delta\vect{u}, \delta\vect{v}_1, \delta\vect{v}_2)$ for~\eqref{eq:cp}:
\begin{equation}
\label{eq:lin_sys_aug_pd}
  \left \{
  \begin{array}{rcl}\vspace{5pt}
    \vect{A}\delta\vect{u} + \delta\vect{v}_2- \delta\vect{v}_1  &=&  \vect{K}^T  \vect{W}  \vect{f}
    -\vect{A} \vect{u} -\vect{v}_2 + \vect{v}_1\ ,  \\\vspace{5pt}
    \vect{V}_1 \delta\vect{u} + (\vect{U}-\underline{\vect{U}}) \delta\vect{v}_1 & = &\mu\vect{e}-\vect{v}_1\cdot(\vect{u}-\underline{\vect{u}}) ,\\
    -\vect{V}_2 \delta\vect{u} + (\overline{\vect{U}}-{\vect{U}}) \delta\vect{v}_2 & = &\mu\vect{e}-\vect{v}_2\cdot(\overline{\vect{u}}-{\vect{u}}) \ ,
  \end{array}
  \right .
\end{equation}
where $\vect{U}, \underline{\vect{U}}, \overline{\vect{U}}, \vect{V}_1$, and $\vect{V}_2$ are diagonal matrices with the diagonal given 
by the vectors $\vect{u}, \underline{\vect{u}}, \overline{\vect{u}}, \vect{v}_1$, and $\vect{v}_2$, respectively.
%The generic primal-dual approach presented above  requires a feasible starting point, that is, a point $\vect{u}, \vect{v}_1, \vect{v}_2$
%that satisfies the top equation and the inequalities in~\eqref{eq:cp}. Even though such points can be obtained by artificially transforming
%the problem, certain "infeasible" interior-point methods require a starting point that satisfies only the inequalities in~\eqref{eq:cp}.
In Mehrotra's algorithm, given  the  current iterate $(\vect{u},\vect{v}_1, \vect{v}_2)$, one first computes  the predictor direction 
$(\delta\vect{u}^{a},\delta\vect{v}_1^{a},\delta\vect{v}_2^{a})$  as the solution of~\eqref{eq:lin_sys_aug_pd} with $\mu=0$.
Secondly, the corrector direction $(\delta\vect{u},\delta\vect{v}_1,\delta\vect{v}_2)$ is the solution of a linear system that differs
from~\eqref{eq:lin_sys_aug_pd} only in the right-hand side, namely:
\begin{equation}
\label{lin_sys_aug_corr}
  \left \{
  \begin{array}{rcl}\vspace{5pt}
    \vect{A}\delta\vect{u} + \delta\vect{v}_2- \delta\vect{v}_1  &=&  \vect{K}^T  \vect{W}  \vect{f}
    -\vect{A} \vect{u} -\vect{v}_2 + \vect{v}_1\ ,  \\\vspace{5pt}
    \vect{V}_1 \delta\vect{u} + (\vect{U}-\underline{\vect{U}}) \delta\vect{v}_1 & = &\sigma \mu\vect{e}-\vect{v}_1\cdot(\vect{u}-\underline{\vect{u}})
    +\delta\vect{u}^{a}\cdot\delta\vect{v}_1^{a},\\
    -\vect{V}_2 \delta\vect{u} + (\overline{\vect{U}}-{\vect{U}}) \delta\vect{v}_2 & = &\sigma \mu\vect{e}-\vect{v}_2\cdot(\overline{\vect{u}}-{\vect{u}})
    +\delta\vect{u}^{a}\cdot\delta\vect{v}_2^{a}\ ,
  \end{array}
  \right .
\end{equation}
%\begin{equation}
%\label{lin_sys_aug_corr1}
%    \begin{array}{rcl}
%    (\beta \vect{W}+\vect{K}^T  \vect{W}  \vect{K})\delta\vect{u} - \delta\vect{v}  &=&
%      -((\beta \vect{W}+\vect{K}^T  \vect{W}  \vect{K})\vect{u} + \vect{K}^T  \vect{W}  \vect{f} - \vect{v}) \\
%      \vect{V} \delta\vect{u} + \vect{U} \vect{v} & = &\sigma\mu\vect{e}-\vect{u}\cdot\vect{v}+ \delta\vect{u}^{a}\cdot\delta\vect{v}^{a},
%    \end{array}
%\end{equation}
%\begin{displaymath}
%where $\mu = \frac{(\vect{u}-\underline{\vect{u}})^T\vect{v}_1 + (\overline{\vect{u}}-\vect{u})^T\vect{v}_2}{2n}$
where $\mu = \left((\vect{u}-\underline{\vect{u}})^T\vect{v}_1 + (\overline{\vect{u}}-\vect{u})^T\vect{v}_2\right)/(2 N_h)$,
%\end{displaymath}
%$\mu = \vect{u}^T\vect{v}/n$ \emph{(Cosmin, what is $\mu$ if we have two $v_1, v_2$ instead of just $v$ ?)}
and $\sigma >0$ is a centering parameter that is computed accordingly to Mehrotra's heuristic.
Therefore, both the predictor and the corrector step involve a system -- the  \emph{augmented system} -- of the form
\begin{equation}
\label{eq:linsys}
\left[
  \begin{array}{ccc}\vspace{5pt}
    \vect{A}& -\vect{I}& \vect{I}\\\vspace{5pt}
    \vect{V}_1& (\vect{U}-\underline{\vect{U}})& \vect{0}\\
    -\vect{V}_2 & \vect{0}& (\overline{\vect{U}}-{\vect{U}})
  \end{array}
  \right]
\cdot
\left[
  \begin{array}{l}\vspace{5pt}
    \delta\vect{u}  \\\vspace{5pt}
    \delta\vect{v}_1 \\
    \delta\vect{v}_2
  \end{array}
  \right] =
\left[
  \begin{array}{l}\vspace{5pt}
    \vect{r}_u  \\\vspace{5pt}
    \vect{r}_{v_1} \\
    \vect{r}_{v_2}
  \end{array}
  \right]\ .
\end{equation}
Since the matrices on the second and third block-rows of~\eqref{eq:linsys} are diagonal, we can substitute
$\delta\vect{v}_1 = (\vect{U}-\underline{\vect{U}})^{-1}(\vect{r}_{v_1}-\vect{V}_1\delta \vect{u})$
and $ \delta\vect{v}_2 = (\overline{\vect{U}}-\vect{U})^{-1}(\vect{r}_{v_2}+\vect{V}_2\delta \vect{u})$
into the first block-row to obtain the \emph{reduced system}
\begin{equation}
\label{lin_sys_normal_eqn1}
\left[\vect{A}+(\vect{U}-\underline{\vect{U}})^{-1} \vect{V}_1 + (\overline{\vect{U}}-\vect{U})^{-1}\vect{V}_2\right]\delta \vect{u} = \vect{r}
\end{equation}
with
$$
\vect{r}=\vect{r}_u + (\vect{U}-\underline{\vect{U}})^{-1} \vect{r}_{v_1} - (\overline{\vect{U}}-\vect{U})^{-1}\vect{r}_2\ .
$$

We note that the matrix of the reduced system~\eqref{lin_sys_normal_eqn1} is symmetric positive definite, while the
matrix of~\eqref{eq:linsys} is similar to the symmetric indefinite matrix
%\begin{equation}
%\label{eq:linsysindef}
$$
\vect{C}=\left[
  \begin{array}{ccc}\vspace{5pt}
    \vect{A}& \vect{I}& \vect{I}\\\vspace{5pt} \vect{I}&
    -\vect{V}_1^{-1}(\vect{U}-\underline{\vect{U}})& \vect{0}\\
    \vect{I}& \vect{0}&
    -\vect{V}_2^{-1}(\overline{\vect{U}}-{\vect{U}})
  \end{array}
  \right]\ .
$$
If strict complementarity holds for at least one coordinate in the pair
($\widehat{\vect{u}}, \widehat{\vect{v}}_1$)
(resp. ($\widehat{\vect{u}}, \widehat{\vect{v}}_2$)), then the diagonal
matrix $\vect{V}_1^{-1}(\vect{U}-\underline{\vect{U}})$ (resp. $\vect{V}_2^{-1}(\overline{\vect{U}}-\vect{U})$)
has increasingly small and/or large
entries as the interior-point algorithm approaches the solution.
It is easy to see that the largest eigenvalue of $\vect{C}$ is larger than any
of the diagonal entries of $\vect{V}_1^{-1}(\vect{U}-\underline{\vect{U}})$ or $\vect{V}_2^{-1}(\overline{\vect{U}}-\vect{U})$,
while the smallest (in absolute value) of its eigenvalues are $O(1)$. Therefore, the matrix $\vect{C}$, and hence the 
system~\eqref{eq:linsys} is ill-conditioned. Several
equivalent reformulations of the system~\eqref{eq:linsys} are
proposed in~\cite{benhabtar09} (also see~\cite{MR1361601}). 
As in~\cite{Bart_Omar:SC05a, MR2429872}, in this article we use 
the reduced form~\eqref{lin_sys_normal_eqn1} which, for the problem under study, is symmetric positive definite.
While~\eqref{lin_sys_normal_eqn1} also suffers from the well known ill-conditioning 
of interior point methods, and in fact is even more ill-conditioned than~\eqref{eq:linsys} if $\beta\ll 1$, the condition number 
of~\eqref{lin_sys_normal_eqn1} can be significantly reduced
first by rescaling both the unknowns and the equations  (left- and right-preconditioning), 
as shown below, and then by two-grid and multigrid preconditioning,
as discussed in Sections~\ref{mgipm:sec:two_grid} and~\ref{mgipm:sec:multigrid} .

Given a  vector $\vect{m}=[m_1,m_2,\dots]^T$ we denote by $\vect{D}_{\vect{m}}$ the diagonal matrix with entries
\mbox{$D_{ii}=m_i$}, and by $\vect{p}\cdot/\vect{m}$ the vector $[p_1/m_1,p_2/m_2,\dots]^T$. With $\vect{A}$ as in~\eqref{eq:Adef},
$\vect{w}=\vect{w}_h=[w_1,w_2,\dots, w_{N_h}]^T$, and
$$\vect{m} = \vect{v}_1\cdot/(\vect{u}-\underline{\vect{u}})  + \vect{v}_2\cdot /(\overline{\vect{u}}-\vect{u})\ ,$$
the reduced system~\eqref{lin_sys_normal_eqn1} can be written as
\begin{equation}
\label{eq:shortlinsysD}
  \left(\vect{D}_{\vect{m}+\beta \vect{w}}+\vect{K}^T \vect{W} \vect{K} \right) \delta\vect{u} =  \vect{r}\ .
\end{equation}
Left-multiplication with $\vect{W}^{-1}$ further yields
\begin{equation}
  \left(\vect{D}_{(\vect{m}/\vect{w})+\beta\vect{e}}+\vect{W}^{-1}\vect{K}^T \vect{W} \vect{K} \right)\delta\vect{u} =  \vect{W}^{-1}\vect{r}\ .
  \label{lin_sys_normal_genW}
\end{equation}
Let $\vect{p} = \sqrt{(\vect{m}/\vect{w})+\beta\vect{e}}$ (component-wise). By rescaling $\delta\vect{u}' = \vect{D}_{\vect{p}} \delta \vect{u}\ ,$
and factoring out $\vect{D}_{\vect{p}}$ in~\eqref{lin_sys_normal_genW}, the system becomes
\begin{equation}
  \left(\vect{I} +\vect{W}^{-1}\vect{L}^T \vect{W} \vect{L} \right)\delta\vect{u}' =
  \vect{D}_{1\cdot/\vect{p}} \vect{W}^{-1}\vect{r}\ ,
  \label{lin_sys_normal_genWD}
\end{equation}
where $$\vect{L}=\vect{L}_h  = \vect{K}_h\:\vect{D}_{1\cdot/\vect{p}}\ ,$$
and we used the commutation of the diagonal matrices $\vect{W}^{-1}$ and $\vect{D}_{1\cdot/\vect{p}}$.
We prefer to write~\eqref{lin_sys_normal_genWD} in  compact form as
\begin{equation}
  \left(\vect{I} +\vect{H}\right)\delta\vect{u}' =  \vect{r}'\ ,
  \label{lin_sys_normal_genWDH}
\end{equation}
with
$$\vect{H} = \vect{H}_h \stackrel{\mathrm{def}}{=} (\vect{W}_h)^{-1}\vect{L}_h^T \vect{W}_h \vect{L}_h\ ,$$
and $\vect{r}' = \vect{D}_{1\cdot/\vect{p}} \vect{W}^{-1}\vect{r}$.
Note that the matrix in~\eqref{eq:shortlinsysD} is symmetric positive definite, while the matrix $\vect{H}$
is symmetric (and positive definite) with respect to the $\vect{W}$-dot product. 
Furthermore, it is easy to see that $\nnorm{\vect{H}}= O(\beta^{-1}\nnorm{\vect{K}}^2)$
which implies that
$$\mathrm{cond}(\vect{I}+\vect{H})=O(\beta^{-1}\nnorm{\vect{K}}^2)\ ,$$
independently of the mesh parameter $h$.
However, for the model problems considered,
the matrix $\left(\vect{I} +\vect{H}\right)$ is dense (in standard representation),
and therefore for large-scale problems it cannot be formed and/or stored. Matrix-vector multiplication can be performed at a cost equivalent to
two applications of the matrix $\vect{K}$, hence residual computations are expensive. So~\eqref{lin_sys_normal_genWDH}
has to be solved using iterative methods, and for increased efficiency  we need high-quality, matrix-free  preconditioners.
As it turns out, it is the system~\eqref{lin_sys_normal_genWDH} rather than~\eqref{eq:shortlinsysD} that renders itself to good multigrid preconditioning.
In the next sections
we develop  a multigrid preconditioner for~\eqref{lin_sys_normal_genWDH} under the assumption
that~$\vect{m}$ represents a positive and relatively ``smooth'' function~$\mu_h$.

\section{The two-grid preconditioner} 
\label{mgipm:sec:two_grid}
In this section we develop and analyze a two-grid preconditioner for the linear 
system~\eqref{lin_sys_normal_genWDH}. The work relies on the multigrid techniques
developed by Dr{\u a}g{\u a}nescu and Dupont in~\cite{MR2429872} for~\eqref{eq:normaleq}. As will be shown, the constructed
preconditioner has, under certain hypotheses, optimal order quality
with respect to the discretization parameter $h$.

%%%%%%%%%%%%%%%%%%%%%%%%%%%%%%%%%%%%%%%%%%%%%%%%%%%%%%%%%%%%
% SUBSECTION
%%%%%%%%%%%%%%%%%%%%%%%%%%%%%%%%%%%%%%%%%%%%%%%%%%%%%%%%%%%%
%--------------------------%
\subsection{Algorithm design}
%---------------------%
\label{ssec:tgheuristics}
For the purpose of algorithm design and analysis it is advantageous 
to regard~\eqref{lin_sys_normal_genWDH} as an equation in  $\op{V}_h$ rather than $\mathbb{R}^{N_h}$,
so we have to identify  the operator in $\mathfrak{L}(\op{V}_h)$ that is represented by $\vect{H}_h$.
First we define for $\lambda\in L^{\infty}(\Omega)$ the multiplication-by-$\lambda$ operator
$\op{D}_{\lambda}:L^2\rightarrow L^2$ by
$$
\op{D}_{\lambda} u = \lambda\: u\ ,
$$
and its discrete version $\op{D}^h_{\lambda}\in\mathfrak{L}(\op{V}_h)$ by
$$\op{D}^h_{\lambda} u = \op{I}_h \op{D}_{\lambda} u\ .$$
Given a vector $\vect{m}\in\mathbb{R}^{N_h}$ we define a function $\mu_h\in\op{V}_h$ by setting
$\mu_h(P^h_i) = \vect{m}_i$, $i=1,\dots,N_h$; it follows that the diagonal matrix $\vect{D}_{(\vect{m}/\vect{w})+\beta}$ represents the 
operator 
$$\op{D}^h_{(\mu_h/w_h)+\beta} = \op{I}_h \op{D}_{(\mu_h/w_h)+\beta}\ .$$ 
To simplify notation let
\begin{equation}
\label{eq:lambda_def}
\lambda_h = (\mu_h/w_h) +\beta\ .
\end{equation}
Then $\vect{L}_h=\vect{K}_h \vect{D}_{1\cdot/\vect{p}}$ represents the operator
\begin{equation}
\label{eq:Lhdef}
\op{L}_h = \op{K}_h \op{D}^h_{1/\sqrt{\lambda_h}}\ .
\end{equation}
If we denote by $\op{L}_h^*$ the dual of $\op{L}_h$ with respect to the 
$\innprd{\cdot}{\cdot}_h$-inner product, that is, 
$$
\innprd{\op{L}_h u}{v}_h = \innprd{u}{\op{L}^*_h v}_h\ ,\ \ \forall u, v\in \op{V}_h\ ,
$$
then $\op{L}_h^*$ is represented by $\vect{W}_h^{-1} \vect{L}_h^T \vect{W}_h$, 
so  $\vect{H}_h=(\vect{W}_h)^{-1} \vect{L}_h^T \vect{W}_h \vect{L}_h$ represents the  operator
\begin{equation}
\label{eq:Hhdef}
\op{H}_h \stackrel{\mathrm{def}}{=} \op{L}_h^*\op{L}_h\ .
\end{equation}
Hence, the operator we need to invert for solving~\eqref{lin_sys_normal_genWDH} is
\begin{equation}
\label{mgipm:eq:fineopdef}
\op{G}_h={I}+ \op{H}_h \ .
\end{equation}
Note that the operator $\op{G}_h$ is symmetric with respect to 
$\innprd{\cdot}{\cdot}_h$, i.e., $\op{G}_h = \op{G}_h^*$.

The idea behind the proposed two-grid preconditioner for $\op{G}_h$ lies in the ``smoothing''
properties of $\op{L}_h$. More precisely, we regard $\op{L}_h$ as a discretization of $$\op{L}=\op{K}\: \op{D}_{1/\sqrt{\lambda}}$$
for some function $\lambda$ for which $$\op{I}_h(\lambda) = \lambda_h\ .$$ 
If $\lambda$ (assumed to be $\ge \beta >0$) is relatively smooth (e.g., it can always be chosen to be~$C^2$),
then the multiplication operator $\op{D}_{1/\sqrt{\lambda}}$ is neither  smoothing nor roughening,
so the follow-on application of $\op{K}$ results in smoothing. An alternative point of view is that
$\op{D}_{1/\sqrt{\lambda}}$ is bounded in $\mathfrak{L}(L^2)$, and if $\op{K}$ is compact, then $\op{K} \op{D}_{1/\sqrt{\lambda}}$ is also compact.
Hence, it is natural to assume that $\op{H}_h=\op{L}_h^* \op{L}_h$ is ``smoothing'', even though $\op{L}_h^*$ has no
direct connection with the dual of $\op{K} \op{D}_{1/\sqrt{\lambda}}$ in $\mathfrak{L}(L^2)$.

We consider the $L^2$-orthogonal splitting of the discrete space
\begin{equation}
\label{eq:vhsplit}
\op{V}_h = \op{V}_{2h}\oplus \op{W}_{2h}\ ,
\end{equation}
and let $\pi=\pi_{2h}$ be the $L^2$-projector onto $\op{V}_{2h}$. Following~\cite{MR97k:65299, MR2001h:65069, mythesis, MR2429872}, 
we propose 
\begin{equation}
\label{mgipm:eq:twogridprec}
\op{N}_h=\rho+ \op{G}_{2h}\pi \ 
\end{equation}
as a two-grid preconditioner, where $\rho=\rho_{2h} = I-\pi_{2h}$ is the projector on $\op{W}_{2h}$, and
the coarse function $\lambda_{2h}$ entering the definition of $\op{G}_{2h}$ 
%through $\op{L}_{2h}= \op{K}_{2h} \op{D}^{2h}_{1/\sqrt{\lambda_{2h}}}$
is given by 
\begin{equation}
\label{eq:lambdacoarse}
\lambda_{2h} = \op{I}_h \lambda_h\ .
\end{equation}
The operator $\op{N}_h$ can also be regarded as an additive Schwartz preconditioner with respect to the splitting~\eqref{eq:vhsplit} (see~\cite{MR2001h:65069}). 
Moreover, the inverse of $\op{N}_h$ is given by
\begin{equation}
\label{mgipm:eq:twogridprecinv}
\op{S}_h\stackrel{\mathrm{def}}{=} \op{N}_h^{-1} = \rho+ \op{G}_{2h}^{-1}\pi \ .
\end{equation}
For developing a multigrid algorithm of comparable quality with the two-grid preconditioner we follow the same strategy as in~\cite{mythesis, MR2429872}, which we
briefly outline in Section~\ref{mgipm:sec:multigrid}.

As shown in the analysis, the use of the $L^2$-projector $\pi_{2h}$ in the definition of $\op{S}_h$, as opposed to other projectors or restriction operators
turns out to be critical for the quality of the preconditioner.
Unfortunately $\pi_{2h}$ is not symmetric with respect to $\innprd{\cdot}{\cdot}_h$, therefore $\op{N}_h$ and $\op{S}_h$ are 
symmetric neither with respect to $\innprd{\cdot}{\cdot}_h$ nor to $\innprd{\cdot}{\cdot}_{L^2}$, but they are almost symmetric.
At the root of this problem lies the use of mesh-dependent norms in the formulation of the discrete optimization problem~\eqref{eq:maineqdiscrete}, 
which in turn was necessary for  the  linear systems inner to the interior point algorithms to be of the form~\eqref{eq:shortlinsysD}, that is, to 
have a diagonal matrix $\vect{D}_{\vect{m}+\beta \vect{w}}$ 
added to $\vect{K}^T \vect{W}\vect{K}$. Had we used the exact $L^2$-norm in the discrete formulation~\eqref{eq:maineqdiscrete},
then the matrix in the system~\eqref{eq:shortlinsysD} would have had the form 
$\vect{D}_{\vect{m}}+\beta \vect{M}+\vect{K}^T \vect{M} \vect{K}$, where $\vect{M}$  is the mass matrix, 
and this form  is less convenient for
preconditioning.

In order to describe the matrices representing  $\op{N}_h$ and $\op{S}_h$
we consider the prolongation operator
$\vect{J}_{h}\in M_{N_h\times N_{2h}}$ representing the natural embedding of $\op{V}_{2h}$ into
$\op{V}_h$, and we define the restriction $\vect{R}_{2h}\in M_{N_{2h}\times N_h}$ 
by $\vect{R}_{2h}=2^{-d}\vect{J}_{h}^T$, $d$ being the dimension of the ambient space (here $d=2$).
Then $\pi_{2h}$ is represented  by the matrix
$$ \vect{\Pi}_{2h} = \vect{M}_{2h}^{-1}\cdot \vect{R}_{2h}\cdot \vect{M}_h\ ,$$
where $\vect{M}_{h}$ (resp. $\vect{M}_{2h}$) is the rescaled mass
matrix on the  fine (resp. coarse)  mesh, defined by
$(\vect{M}_{h})_{i j} = h^{-d}\innprd{\varphi^h_i}{\varphi^h_j}$. Note
that $\vect{\Pi}_{2h}$ is a matrix of type $N_{2h}\times N_h$, and that
$\vect{M}_{2h}=\vect{R}_{2h}\cdot \vect{M}_h\cdot \vect{J}_h$. Furthermore,
the { square} ${N_h\times N_h}$ projection matrix is  given by 
$\vect{P}_{h}= \vect{J}_h\cdot \vect{\Pi}_{2h}$, so that $\vect{P}_h^2 = \vect{P}_h$.  The
projector $\rho=(I-\pi)$ is represented by the matrix
$\vect{Q}_h=(\vect{I}-\vect{P}_h)\in M_{N_h\times N_h}$.  Finally, $\op{S}_h$
is represented by
\begin{equation}
\label{eq:matSdef}
\vect{S}_h = \vect{Q}_h + \vect{J}_h \vect{G}_{2h}^{-1}\vect{\Pi}_{2h}\ .
\end{equation}
Oftentimes in practice the exact projection $\vect{\Pi}_{2h}$ in~\eqref{eq:matSdef}  is replaced by the 
restriction~$\vect{R}_{2h}$, and $\vect{Q}_h$ is taken to be ($\vect{I}-\vect{R}_{2h}$). While for certain problems
this is a viable option~\cite{Bart_Omar:SC05a}, for the applications considered in this work the quality of the preconditioner 
is significanly diminished by this change.

%%%%%%%%%%%%%%%%%%%%%%%%%%%%%%%%%%%%%%%%%%%%%%%%%%%%%%%%%%%%
% SUBSECTION
%%%%%%%%%%%%%%%%%%%%%%%%%%%%%%%%%%%%%%%%%%%%%%%%%%%%%%%%%%%%
%--------------------------%
\subsection{Algorithm analysis}
%---------------------%
\label{ssec:tganalysis}
Our analysis consists of evaluating the quality of the two-grid preconditioner by eventually estimating the
spectral distance $d_{\sigma}(\op{S}_h, \op{G}_h^{-1})$ (Theorem~\ref{mgipm:th:twogrid}), and is performed over 
three steps. First we evaluate the norm-distance $\nnorm{\op{G}_h-\op{N}_h}$ under the assumption that $\op{H}_h$ satisfies 
Condition~\ref{mgipm:cond:cond1} below. Second, we show that if $\op{L}$ and $\op{L}_h$ verify Condition~\ref{mgipm:cond:condsmooth}, then 
$\op{H}_h$ satisfies 
Condition~\ref{mgipm:cond:cond1}. Third, we show that $\op{L}, \op{L}_h$ satisfy Condition~\ref{mgipm:cond:condsmooth} (SAC) with a 
constant $C(\op{L})$ depending on $\lambda$ and on $C(\op{K})$, that is, the constant associated  to $\op{K}, \op{K}_h$ satisfying
SAC. For specific applications, the fact that $\op{K}, \op{K}_h$ satifsy SAC is normally verifiable, as is shown
in Section~\ref{mgimp:sec:num_examples}.
%Throughout this section we assume that the operators $\op{H}_h$ satisfy
%%---------------------------------------%
\begin{condition}
\label{mgipm:cond:cond1} The operators $\op{H}_{h}$ are symmetric with respect to $\innprd{\cdot}{\cdot}_h$, positive semidefinite, and
uniformly bounded with respect to $h\in I$, that is, there exists a constant $C(\op{H})>0$ independent on $h$ such that
\begin{equation}
\label{mgipm:cond:unifbdH}
\enorm{\op{H}_h}_h\le C(\op{H})\ ,\ \ \forall h\in I\ .
\end{equation}
Moreover, there exists $p > 0$ so that
\begin{equation}
\label{mgipm:cond:twolevapproxH}
\nnorm{(\op{H}_{h}-\op{H}_{2h}\pi_{2h})u} \le C(\op{H}) h^p \nnorm{u},\ \ \mathrm{for\ all\ }u\in\op{V}_h,\ h\in I\ .
\end{equation}
\end{condition}
\noindent For linear splines the optimal approximation order is $p=2$, but
for certain problems and discretizations the actual rate  may be suboptimal.
%%---------------------------------------%
%
%% TWO-GRID APPROXIMATION LEMMA
%
\begin{lemma}
\label{lma:GNapprox}
If $\op{H}_h$ satisfies Condition~\textnormal{\ref{mgipm:cond:cond1}}, then
\begin{equation}
\label{eq:GNapprox}
\nnorm{\op{G}_{h}-\op{N}_h} \le C(\op{H}) h^p\ .
\end{equation}
\end{lemma}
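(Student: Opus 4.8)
The plan is to observe that the two-grid preconditioner $\op{N}_h$ differs from the fine operator $\op{G}_h$ by \emph{exactly} the quantity estimated in Condition~\ref{mgipm:cond:cond1}, so that \eqref{eq:GNapprox} follows by a one-line computation followed by a direct appeal to \eqref{mgipm:cond:twolevapproxH}. Concretely, I would first expand $\op{G}_h-\op{N}_h$ as an operator in $\mathfrak{L}(\op{V}_h)$, using $\op{G}_h=I+\op{H}_h$ from \eqref{mgipm:eq:fineopdef}, the analogous $\op{G}_{2h}=I+\op{H}_{2h}$, and $\rho=\rho_{2h}=I-\pi_{2h}$ from the definition \eqref{mgipm:eq:twogridprec}. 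Since $\op{G}_{2h}\pi_{2h}$ acts as $\pi_{2h}+\op{H}_{2h}\pi_{2h}$ on $\op{V}_h$ (the coarse identity restricted to the range of $\pi_{2h}$ being the fine identity there), one gets
\[
\op{N}_h=(I-\pi_{2h})+\op{G}_{2h}\pi_{2h}=(I-\pi_{2h})+\pi_{2h}+\op{H}_{2h}\pi_{2h}=I+\op{H}_{2h}\pi_{2h},
\]
so the identity and $\pm\pi_{2h}$ terms cancel and
\[
\op{G}_h-\op{N}_h=\op{H}_h-\op{H}_{2h}\pi_{2h}.
\]

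With this identity in hand the lemma is immediate: for every $u\in\op{V}_h$, the two-level approximation bound \eqref{mgipm:cond:twolevapproxH} gives
\[
\nnorm{(\op{G}_h-\op{N}_h)u}=\nnorm{(\op{H}_h-\op{H}_{2h}\pi_{2h})u}\le C(\op{H})\,h^p\nnorm{u},
\]
and taking the supremum over $u\in\op{V}_h$ with $\nnorm{u}=1$ yields $\nnorm{\op{G}_h-\op{N}_h}\le C(\op{H})h^p$, which is \eqref{eq:GNapprox}. Here $\nnorm{\cdot}$ is the $L^2$-operator norm, in agreement with the statement of \eqref{mgipm:cond:twolevapproxH}.

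I do not expect a genuine obstacle here; the only points needing a bit of care are bookkeeping ones. One must keep track of the natural inclusion $\op{V}_{2h}\hookrightarrow\op{V}_h$ when reading $\op{G}_{2h}\pi_{2h}$ and $\op{H}_{2h}\pi_{2h}$ as operators on $\op{V}_h$, and use that $\pi_{2h}$ is a projector, $\pi_{2h}^2=\pi_{2h}$, so that the cancellation leading to $\op{G}_h-\op{N}_h=\op{H}_h-\op{H}_{2h}\pi_{2h}$ is exact rather than approximate. It is worth noting that neither the uniform boundedness \eqref{mgipm:cond:unifbdH} nor the symmetry/positive-semidefiniteness in Condition~\ref{mgipm:cond:cond1} is used for this lemma; those hypotheses enter only later, when passing from this norm estimate to the spectral-distance bound of Theorem~\ref{mgipm:th:twogrid}.
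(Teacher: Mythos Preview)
Your proposal is correct and matches the paper's own proof almost verbatim: the paper also observes that $\op{G}_h-\op{N}_h=(I+\op{H}_h)-(\rho+(I+\op{H}_{2h})\pi)=\op{H}_h-\op{H}_{2h}\pi$ and then invokes \eqref{mgipm:cond:twolevapproxH} directly. Your additional remarks about which parts of Condition~\ref{mgipm:cond:cond1} are actually used are accurate and do not appear in the paper's terse one-line argument.
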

%
%%% BEGIN PROOF
\indent{\em Proof}.
This result is an immediate consequence of~\eqref{mgipm:cond:twolevapproxH}, since
$$\op{G}_{h}-\op{N}_h = (I+\op{H}_{h})-(\rho+ (I+\op{H}_{2h}) \pi) = \op{H}_{h} - \op{H}_{2h}\pi\ .\qquad\endproof$$
%%% END PROOF
%
%%---------------------------------------%
Verifying that~\eqref{mgipm:cond:twolevapproxH} of Condition~\ref{mgipm:cond:cond1} holds under
some general hypotheses is nontrivial for this problem due to the presence of multiple inner products that have to be taken into
consideration. More precisely, if $\op{L}_h^*$ were the dual of $\op{L}_h$ with respect to $\innprd{\cdot}{\cdot}$ instead of
$\innprd{\cdot}{\cdot}_h$, then Condition~\ref{mgipm:cond:cond1} would follow from the
approximability of $\op{L}$ by $\op{L}_h$ together with the smoothing properties of $\op{L}$, as is shown
in~\cite{MR2429872} (proof of Theorem 4.1). Hence a natural requirement is that $\innprd{\cdot}{\cdot}_h$  approximates
$\innprd{\cdot}{\cdot}$  well. 
%We restrict the remainder of this discussion to the case $d=2$.

%%---------------------------------------%
%
% LEMMA
%
\begin{lemma}
\label{lma:innprdapprox}
With $w_h$ chosen as in~\thref{eq:weighdef} there exists a constant $C=C(\op{T}_0)>0$  independent of $h$ such that
\begin{equation}
\label{eq:innprdtsapprox}
\abs{\innprd{u}{v}_h-\innprd{u}{v}}\le C h^2 \nnorm{u}_{H^1(\Omega)}\cdot\nnorm{v}_{H^1(\Omega)}\ ,\ \forall u,v \in \op{V}_h.
\end{equation}
\end{lemma}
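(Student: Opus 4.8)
The plan is to reduce the estimate to an element-by-element interpolation bound, using the identity $\innprd{u}{v}_h=\int_\Omega\op{I}_h(uv)$ that holds for all $u,v\in\op{V}_h$ (established above from the exactness of the cubature $Q$ on linear functions). This gives
\begin{equation*}
\innprd{u}{v}_h-\innprd{u}{v}=\int_\Omega\bigl(\op{I}_h(uv)-uv\bigr)=\sum_{T\in\op{T}_h}\int_T\bigl(\op{I}_h(uv)-uv\bigr),
\end{equation*}
so it suffices to control $\nnorm{\op{I}_h(uv)-uv}_{L^1(T)}$ on each $T$ and sum. First I would note that $\op{I}_h(uv)$ restricted to any $T$ is exactly the linear Lagrange interpolant of $uv\big|_T$ at the three vertices of $T$ — the restriction to interior nodes in the definition of $\op{I}_h$ changes nothing, since $uv$ vanishes at every boundary node whenever $u,v\in\op{V}_h$ — and that $uv\big|_T$ is a quadratic polynomial because $u\big|_T$ and $v\big|_T$ are affine. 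The standard affine-scaling (Bramble--Hilbert) interpolation estimate on a shape-regular element then yields a constant $C$, depending only on the shape-regularity of $\op{T}_{h_0}$, with
\begin{equation*}
\nnorm{\op{I}_h(uv)-uv}_{L^1(T)}\le C\,h_T^2\,\abs{uv}_{W^2_1(T)},\qquad T\in\op{T}_h
\end{equation*}
(see, e.g.,~\cite{MR2373954}).

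The key computation is the second step. On $T$, since $u\big|_T$ and $v\big|_T$ are affine, the Hessian of $uv$ is the constant matrix $D^2(uv)\big|_T=\nabla u\big|_T\otimes\nabla v\big|_T+\nabla v\big|_T\otimes\nabla u\big|_T$; hence $\abs{D^2(uv)}\le 2\,\abs{\nabla u}\,\abs{\nabla v}$ pointwise on $T$, and therefore $\abs{uv}_{W^2_1(T)}\le 2\int_T\abs{\nabla u}\,\abs{\nabla v}$. This is exactly the point where only the $H^1$-content of $u$ and $v$ enters: the "pure'' second-order terms $v\,D^2u$ and $u\,D^2v$ vanish because $u,v$ are piecewise affine, so only the cross term survives and no $H^2$-regularity of $u$ or $v$ is needed.

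Finally I would sum over $T\in\op{T}_h$, use quasi-uniformity of $(\op{T}_h)_{h\in I}$ to replace $h_T$ by $Ch$ uniformly, and apply Cauchy--Schwarz:
\begin{equation*}
\abs{\innprd{u}{v}_h-\innprd{u}{v}}\le C\sum_{T}h_T^2\int_T\abs{\nabla u}\,\abs{\nabla v}\le Ch^2\int_\Omega\abs{\nabla u}\,\abs{\nabla v}\le Ch^2\,\nnorm{\nabla u}\,\nnorm{\nabla v}\le Ch^2\,\nnorm{u}_{H^1(\Omega)}\nnorm{v}_{H^1(\Omega)},
\end{equation*}
where $\nabla u,\nabla v$ are taken elementwise, which is all the integrals require; this is the claimed bound with $C=C(\op{T}_0)$. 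The argument is essentially routine and presents no serious obstacle. The one place deserving care is the second step — recognizing that the $O(h^2)$ interpolation error multiplies $\abs{uv}_{W^2_1}$ and that this seminorm collapses to $\int\abs{\nabla u}\abs{\nabla v}$, so that the estimate closes in the $H^1$-norms appearing in the statement rather than in unavailable $H^2$-norms of $u$ and $v$. The bookkeeping for boundary triangles and the interior-node convention in $\op{I}_h$ needs only the one-sentence remark made above.
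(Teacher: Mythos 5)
Your proposal is correct and follows essentially the same route as the paper: reduce to $\nnorm{uv-\op{I}_h(uv)}_{L^1(\Omega)}$, apply the elementwise Bramble--Hilbert interpolation estimate in $L^1$, observe that the $W^2_1(T)$-seminorm of $uv$ collapses to $\int_T\abs{\nabla u}\,\abs{\nabla v}$ because $u,v$ are affine on $T$, and finish with Cauchy--Schwarz. Your treatment of the cross-term Hessian and the boundary-node convention is, if anything, slightly more explicit than the paper's.
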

%
% PROOF
%
%%% BEGIN PROOF
\indent{\em Proof}.
By Theorem 4.4.4 in~\cite{MR2373954}, given $d=2$,
there exists a constant $C>0$ that depends on the ``chunkiness'' of the initial triangulation $\op{T}_0$ but is 
independent of $h$ and of $T\in\op{T}_h$, so that for all $u, v\in \op{V}_h$
\begin{eqnarray*}
\nnorm{u v-\op{I}_h(u v)}_{L^1(T)}\le C h^2 \abs{u v}_{W^2_1(T)} \stackrel{u, v\ \mathrm{linear\ on\ }T}{=}
C h^2 \int_T \abs{\nabla u\cdot \nabla v}\ .
\end{eqnarray*}
After summing  over all triangles $T\in\op{T}_h$ we obtain
\begin{eqnarray*}
\nnorm{u v-\op{I}_h(u v)}_{L^1(\Omega)}\le C h^2  \nnorm{\nabla u\cdot \nabla v}_{L^1(\Omega)}
\le C h^2  \abs{u}_{H^1(\Omega)} \cdot \abs{v}_{H^1(\Omega)}\ .
\end{eqnarray*}
The conclusion follows from 
\begin{eqnarray*}
\abs{\innprd{u}{v}-\innprd{u}{v}_h} =  \Abs{\int_{\Omega}\left(u v - \op{I}_h(u v)\right)} \le\nnorm{u v-\op{I}_h(u v)}_{L^1(\Omega)}\ .\qquad\endproof
\end{eqnarray*}
%%---------------------------------------%

Throughout this section we denote by $C(\op{L})$ a generic constant that is proportional to the 
constant $C(\op{L})$ of Condition~\ref{mgipm:cond:condsmooth} if the proportionality depends only on the domain $\Omega$ and the initial
triangulation $\op{T}_0$.
\noindent We define the restriction operator \mbox{$\op{R}^w_{2h}:\op{V}_h\rightarrow \op{V}_{2h}$} by 
$$
\innprd{\iota_h u}{v}_h = \innprd{u}{\op{R}^w_{2h} v}_{2h}\ ,\ \forall u\in \op{V}_{2h}, v\in \op{V}_h\ ,
$$
where $\iota_h:\op{V}_{2h}\rightarrow \op{V}_h$ is the inclusion operator. It follows that $\op{R}^w_{2h}$ is
uniformly bounded with respect to $h\in I$, that is, there exists $C$ independent of $h\in I$ so that
\begin{equation}
\label{eq:Rstab}
\nnorm{\op{R}^w_{2h} u}\le C \nnorm u,\ \ \forall u\in\op{V}_h\ .
\end{equation}

We call a  triangulation $\op{T}$ \emph{locally symmetric} if for every vertex $P$ the union of triangles
in $\op{T}$ having $P$ as a corner is invariant with respect to the reflection through $P$ given by $r_P(x) = (2 P-x)$. If $\op{T}$ is locally symmetric
and $\varphi_P$ is the nodal basis function at $P$, then $\varphi_P\circ r_P = \varphi_P$. Furthermore, a simple calculation shows
that for any linear map $L(x)=a_1 x_1 + a_2 x_2$  we have
\begin{equation}
\label{eq:locsymmconseq}
\int_{\Omega}\varphi_P(x) L(x-P) dx = 0\ .
\end{equation}
Naturally, a uniform mesh is locally symmetric.

The following grouped results are either simple consequences
of Condition~\ref{mgipm:cond:condsmooth} or extracted from~\cite{MR2429872}.
%%---------------------------------------%
%
% LEMMA
%
\begin{lemma}
\label{lma:manyresults}
If $\op{L}, \op{L}_h$ satisfy Condition~\ref{mgipm:cond:condsmooth} there exist 
constants $C(\op{L})$ and \mbox{$C'=C'(\Omega)$}  independent of $h$ such that the following hold:\\
\textnormal{(a)} $H^1, L^2$ - uniform stability of $\op{L}_h$: 
\begin{equation}
\label{eq:H1stabdisc}
\nnorm{\op{L}_h u}_{H^m(\Omega)}\le C(\op{L}) \nnorm{u}\ ,\ \forall u\in \op{V}_h,\ m=0, 1;
\end{equation}
\textnormal{(b)} smoothing of negative-index norm: 
\begin{equation}
\label{eq:Hm2stabcont}
\nnorm{\op{L} u} \le C(\op{L}) \norm{u}_{\Hneg{m}}\ ,\ \forall u\in {\mathcal V}_h, m=1,2\ ;
\end{equation}
\textnormal{(c)}  negative-index norm approximation of the identity by $\pi_{2h}, \op{R}^w_{2h}$:
\begin{eqnarray}
\label{eq:projid_approx}
\norm{(I - \pi_{2h}) u}_{\Hneg{2}(\Omega)}& \le& C' h^2 \norm{u},\ \ \forall u\in \op{V}_h; \\
\label{eq:restr_approx}
\norm{(I - \op{R}^w_{2h}) u}_{\Hneg{p}(\Omega)}& \le& C' h^p \norm{u},\ \ \forall u\in \op{V}_h\ , 
\end{eqnarray}
where $p=1$ on an unstructured grid, and  $p=2$ on a locally symmetric grid;\\
%\footnote{The uniform, periodic grid is
%the three-line mesh obtained by dividing the unit square into equally sized squares with sides
%parallel to the coordinate axes, and by further cutting each little square along its slope-one
%diagonal; the functions are then considered doubly-periodic.};\\
%
\textnormal{(d)} $\op{L}$ diminishes high-frequencies:
\begin{eqnarray}
\label{eq:projid_approxL}
\norm{\op{L}(I - \pi_{2h}) u}& \le& C(\op{L}) h^2 \norm{u},\ \ \forall u\in \op{V}_h\ ; \\
\label{eq:restr_approxK}
\norm{\op{L}(I - \op{R}^w_{2h}) u}& \le& C(\op{L}) h^p \norm{u},\ \ \forall u\in \op{V}_h\ ,
\end{eqnarray}
where $p=1$ on an unstructured grid, and  $p=2$ on a locally symmetric grid;\\
\textnormal{(e)} 
\begin{eqnarray}
\label{eq:innprdKKH}
\abs{\innprd{\op{L} u}{\op{L} v}-\innprd{\op{L}_h u}{\op{L}_h v}} \le C(\op{L}) h^2 \nnorm{u}\cdot \nnorm{v}\ , \ \forall u\in \op{V}_h\ .
\end{eqnarray}
\end{lemma}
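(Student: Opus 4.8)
The plan is to dispatch the five items in the order (a), (e), (b), (c), (d), so that the later ones can quote the earlier ones. Items (a) and (e) follow directly from SAC. For (a), write $\op{L}_h u=\op{L} u-(\op{L}-\op{L}_h)u$ and use the triangle inequality in $H^m(\Omega)$: the smoothing bound \eqref{mgipm:cond:par_smooth} controls $\nnorm{\op{L} u}_{H^m(\Omega)}$ by $C(\op{L})\nnorm{u}$, while the smoothed-approximation bound \eqref{mgipm:cond:consist} controls $\nnorm{(\op{L}-\op{L}_h)u}_{H^m(\Omega)}$ by $C(\op{L})h^{2-m}\nnorm{u}$ for $u\in\op{V}_h$, $m=0,1$; since $h\le h_0$ on $I$, $h^{2-m}$ is bounded and (a) follows. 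For (e), insert $\pm\innprd{\op{L}_h u}{\op{L} v}$ to rewrite the difference as $\innprd{\op{L} u-\op{L}_h u}{\op{L} v}+\innprd{\op{L}_h u}{\op{L} v-\op{L}_h v}$, apply Cauchy--Schwarz to each term, and bound the four $L^2$ norms by the $m=0$ cases of \eqref{mgipm:cond:par_smooth}, \eqref{mgipm:cond:consist} and part~(a); the $h^2$ factor in \eqref{mgipm:cond:consist} then yields the stated rate.

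For (b) I would use $L^2$-duality. Denoting by $\op{L}^*=\op{D}_{1/\sqrt{\lambda}}\op{K}^*$ the $L^2$-adjoint of $\op{L}$, for $u\in\op{V}_h$ and $m\in\{1,2\}$,
\begin{equation*}
\nnorm{\op{L} u}=\sup_{\nnorm{v}=1}\innprd{\op{L} u}{v}=\sup_{\nnorm{v}=1}\innprd{u}{\op{L}^*v}\le\nnorm{u}_{\Hneg{m}}\,\sup_{\nnorm{v}=1}\nnorm{\op{L}^*v}_{H^m(\Omega)\cap H^1_0(\Omega)}\ ,
\end{equation*}
because $\Hneg{m}$ is by definition the $L^2$-dual of $H^m(\Omega)\cap H^1_0(\Omega)$. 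It then remains to check that $\op{L}^*$ maps $L^2$ boundedly into $H^m(\Omega)\cap H^1_0(\Omega)$, i.e., that $\op{L}^*$ satisfies the smoothing estimate \eqref{mgipm:cond:par_smooth}. This holds because $\op{L}^*$ has the same product structure as $\op{L}$ — multiplication by $1/\sqrt{\lambda}$, which is $C^m$ since $\lambda\ge\beta>0$ is smooth and therefore maps $H^m$ into $H^m$ and preserves vanishing boundary values, composed with the smoothing operator $\op{K}^*$ — and is obtained exactly as in \cite{MR2429872}.

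For (c), both estimates are duality arguments against standard finite element approximation. Since $\pi_{2h}$ is an $L^2$-orthogonal projector,
\begin{equation*}
\norm{(I-\pi_{2h})u}_{\Hneg{2}(\Omega)}=\sup_{\nnorm{\phi}_{H^2(\Omega)}=1}\innprd{(I-\pi_{2h})u}{\phi}=\sup\,\innprd{u}{(I-\pi_{2h})\phi}\le\nnorm{u}\,\sup\nnorm{(I-\pi_{2h})\phi}\ ,
\end{equation*}
and $\nnorm{(I-\pi_{2h})\phi}\le C'(2h)^2\nnorm{\phi}_{H^2(\Omega)}$ by the usual best-approximation estimate on the quasi-uniform triangulation $\op{T}_{2h}$ \cite{MR2373954}, which gives \eqref{eq:projid_approx}. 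The estimate \eqref{eq:restr_approx} for $\op{R}^w_{2h}$ is the genuinely delicate one, since here the mesh-dependent inner products enter: from the defining identity of $\op{R}^w_{2h}$, the uniform bound \eqref{eq:Rstab}, Lemma~\ref{lma:innprdapprox}, and — to improve the rate from $p=1$ to $p=2$ on a locally symmetric grid — the cancellation \eqref{eq:locsymmconseq}, one follows the argument of \cite{MR2429872}. Finally, (d) is immediate from (b) and (c): for $u\in\op{V}_h$ both $(I-\pi_{2h})u$ and $(I-\op{R}^w_{2h})u$ again lie in $\op{V}_h$, so \eqref{eq:projid_approxL} follows by applying (b) with $m=2$ and then the $\pi_{2h}$-part of (c), and \eqref{eq:restr_approxK} by applying (b) with $m=p\in\{1,2\}$ and then the $\op{R}^w_{2h}$-part of (c).

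The main obstacle is the $p=2$ case of the $\op{R}^w_{2h}$ estimate in (c): unlike the other items it is not a soft consequence of boundedness and duality but rests on a superconvergence-type cancellation tied to local symmetry of the mesh, which is exactly why the lemma defers that point to \cite{MR2429872}. A secondary technical point is the one flagged in (b), namely that the $L^2$-adjoint $\op{L}^*$ inherits the smoothing estimate of SAC; this is routine given the structure $\op{L}^*=\op{D}_{1/\sqrt{\lambda}}\op{K}^*$, but it does need to be noted.
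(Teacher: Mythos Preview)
Your proposal is correct and follows essentially the same route as the paper's (very terse) proof: (a) and (e) directly from SAC, (b) by duality through the adjoint (the paper defers this to \cite{MR2429872}, Corollary~6.2), (c) by Bramble--Hilbert for $\pi_{2h}$ and by reference to \cite{MR2429872}, Theorem~6.6 for $\op{R}^w_{2h}$, and (d) by combining (b) and (c). Your flag that (b) genuinely needs $\op{L}^*$ to be smoothing---hence the product structure $\op{L}=\op{K}\op{D}_{1/\sqrt{\lambda}}$ rather than the bare SAC hypothesis on $\op{L}$---is a fair point the paper glosses over with its citation.
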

%
% PROOF
%
\begin{proof}
The stability conditions at (a) are direct consequences of~\eqref{mgipm:cond:par_smooth} and~\eqref{mgipm:cond:consist}, and (b) follows from~\eqref{mgipm:cond:par_smooth}
(see also~\cite{MR2429872} Corollary 6.2).
The estimate~\eqref{eq:projid_approx} is a straightforward consequence of the Bramble-Hilbert Lemma~\cite{MR2373954}, 
while \eqref{eq:restr_approx} follows from Theorem 6.6 in~\cite{MR2429872}
(see Example 6.7 for the uniform mesh case). The inequalities at (d) follow from (b) and (c), and (e) follows from~\eqref{mgipm:cond:consist} and the uniform boundedness of
$\op{L}_h$. 
\end{proof}
%%---------------------------------------%
%
% PROPOSITION
%
\begin{proposition}
\label{prop:conditionimplication}
If the operators $\op{L}$, $\op{L}_h$ satisfy Condition~\textnormal{\ref{mgipm:cond:condsmooth}} with the weights given by~\thref{eq:weighdef},
then Condition~\textnormal{\ref{mgipm:cond:cond1}} holds with $C(\op{H})=C(\op{L})$ and  $p=2$  if the meshes are locally symmetric, or $p=1$ otherwise.
\end{proposition}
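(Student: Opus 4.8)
The plan is to verify the three properties that make up Condition~\ref{mgipm:cond:cond1} for $\op{H}_h = \op{L}_h^*\op{L}_h$, using the smoothing and approximation estimates for $\op{L}, \op{L}_h$ collected in Lemma~\ref{lma:manyresults} together with the inner-product approximation of Lemma~\ref{lma:innprdapprox}. Symmetry of $\op{H}_h$ with respect to $\innprd{\cdot}{\cdot}_h$ is immediate from the definition $\op{H}_h = \op{L}_h^*\op{L}_h$, since $\innprd{\op{L}_h^*\op{L}_h u}{v}_h = \innprd{\op{L}_h u}{\op{L}_h v}_h = \innprd{u}{\op{L}_h^*\op{L}_h v}_h$; positive semidefiniteness follows similarly because $\innprd{\op{H}_h u}{u}_h = \enorm{\op{L}_h u}_h^2 \ge 0$. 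The uniform bound~\eqref{mgipm:cond:unifbdH} follows from part~(a) of Lemma~\ref{lma:manyresults} ($L^2$-stability of $\op{L}_h$), the norm-equivalence~\eqref{eq:equivmeshipl2}, and the fact (noted after~\eqref{eq:equivmeshipl2}) that norm-equivalence extends to operator norms: $\enorm{\op{H}_h}_h = \enorm{\op{L}_h^*\op{L}_h}_h \le \enorm{\op{L}_h^*}_h\enorm{\op{L}_h}_h$, and each factor is $O(\nnorm{\op{L}_h}) = O(C(\op{L}))$. So the first sentence of Condition~\ref{mgipm:cond:cond1} is routine.

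The real work is the two-level estimate~\eqref{mgipm:cond:twolevapproxH}, i.e.\ bounding $\nnorm{(\op{H}_h - \op{H}_{2h}\pi_{2h})u}$ by $C(\op{L})h^p\nnorm{u}$. The strategy is to insert the continuous operator $\op{L}$ as an intermediary and split the difference into a "consistency" part and a "high-frequency damping" part, mirroring the proof of Theorem~4.1 in~\cite{MR2429872} but now carefully tracking the discrepancy between $\innprd{\cdot}{\cdot}$ and $\innprd{\cdot}{\cdot}_h$. Writing $\op{H}_h = \op{L}_h^*\op{L}_h$ and $\op{H}_{2h} = \op{L}_{2h}^*\op{L}_{2h}$, I would test against an arbitrary $v\in\op{V}_h$: since $\op{H}_h - \op{H}_{2h}\pi$ maps into $\op{V}_h$, it suffices to estimate $\innprd{(\op{H}_h-\op{H}_{2h}\pi)u}{v}$ for $v\in\op{V}_h$ and use norm-equivalence. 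Now $\innprd{\op{H}_h u}{v} \approx \innprd{\op{H}_h u}{v}_h = \innprd{\op{L}_h u}{\op{L}_h v}_h \approx \innprd{\op{L}_h u}{\op{L}_h v} \approx \innprd{\op{L}u}{\op{L}v}$, where the first and second $\approx$ cost an $h^2$-error controlled by Lemma~\ref{lma:innprdapprox} together with $H^1$-stability of $\op{L}_h$ from Lemma~\ref{lma:manyresults}(a), and the third $\approx$ costs an $h^2$-error by~\eqref{eq:innprdKKH}. Similarly $\innprd{\op{H}_{2h}\pi u}{v}$: note $\pi v = v - \rho v$ and... here one writes $\innprd{\op{H}_{2h}\pi u}{v}$; since $\op{H}_{2h}\pi u\in\op{V}_{2h}$ and $\pi$ is the $L^2$-projector, $\innprd{\op{H}_{2h}\pi u}{v} = \innprd{\op{H}_{2h}\pi u}{\pi v}$, and then one passes to $\innprd{\cdot}{\cdot}_{2h}$, getting $\innprd{\op{L}_{2h}\pi u}{\op{L}_{2h}\pi v}_{2h}$, which is $\approx \innprd{\op{L}\pi u}{\op{L}\pi v}$ up to $h^2$. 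So the difference reduces, modulo $O(h^2)$ errors, to $\innprd{\op{L}u}{\op{L}v} - \innprd{\op{L}\pi u}{\op{L}\pi v} = \innprd{\op{L}u}{\op{L}(v-\pi v)} + \innprd{\op{L}(u-\pi u)}{\op{L}\pi v}$. The second term is bounded using~\eqref{eq:projid_approxL}: $\nnorm{\op{L}(I-\pi)u}\le C(\op{L})h^2\nnorm{u}$, times $\nnorm{\op{L}\pi v}\le C(\op{L})\nnorm{v}$. For the first term one uses~\eqref{eq:Hm2stabcont} (smoothing into $\op{L}$ from $\Hneg{2}$): $\abs{\innprd{\op{L}u}{\op{L}(I-\pi)v}}\le \nnorm{\op{L}^*\op{L}u}_{H^2}\,\norm{(I-\pi)v}_{\Hneg{2}}$ — wait, more carefully, $\abs{\innprd{\op{L}u}{\op{L}(I-\pi)v}} = \abs{\innprd{\op{L}^*\op{L}u}{(I-\pi)v}}$ and here $\op{L}^*$ is the $L^2$-adjoint of $\op{L}$; since $\op{L}^*$ also smooths (it too satisfies an SAC-type bound, being $\op{D}_{1/\sqrt\lambda}\op{K}^*$), $\op{L}^*\op{L}u\in H^2$ with norm $\le C(\op{L})\nnorm{u}$, and then $\le C(\op{L})\nnorm{u}\cdot\norm{(I-\pi)v}_{\Hneg 2}\le C(\op{L})h^2\nnorm{u}\nnorm{v}$ by~\eqref{eq:projid_approx}.

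The main obstacle is exactly this bookkeeping of which inner product ($\innprd{\cdot}{\cdot}$ versus $\innprd{\cdot}{\cdot}_h$ versus $\innprd{\cdot}{\cdot}_{2h}$) is in force at each step and making sure every substitution either is exact or incurs only an $O(h^p)$ error with a constant of the right form; in particular one must check that $\op{L}^*$, the $L^2$-adjoint appearing implicitly, genuinely has the smoothing property — this is where the factorization $\op{L} = \op{K}\op{D}_{1/\sqrt\lambda}$ and the $C^2$-smoothness of $\lambda$ are used, since $\op{L}^* = \op{D}_{1/\sqrt\lambda}\op{K}^*$ and multiplication by the smooth function $1/\sqrt\lambda$ does not destroy the $H^2$-regularity produced by $\op{K}^*$. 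One also needs $\norm{\pi u}_{H^1}\le C\norm{u}_{H^1}$ ($H^1$-stability of the $L^2$-projection on quasi-uniform meshes, standard) to control the error terms where Lemma~\ref{lma:innprdapprox} is applied to $\pi u,\pi v$. Finally, the exponent $p$: on a locally symmetric mesh~\eqref{eq:restr_approx}–\eqref{eq:restr_approxK} give $p=2$ and all error terms above are $O(h^2)$, so $p=2$; on a general unstructured mesh the restriction-approximation estimates degrade to $p=1$ and hence $p=1$. Assembling the pieces and invoking norm-equivalence~\eqref{eq:equivmeshipl2} once more to return from $\innprd{\cdot}{\cdot}$ to $\nnorm{\cdot}$ yields~\eqref{mgipm:cond:twolevapproxH} with $C(\op{H}) = C(\op{L})$, completing the proof.
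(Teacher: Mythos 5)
Your treatment of the symmetry, positive semidefiniteness, and the uniform bound~\eqref{mgipm:cond:unifbdH} is fine, and your overall plan for~\eqref{mgipm:cond:twolevapproxH} --- test against $v$, insert the continuous $\op{L}$ as intermediary, reduce to high-frequency damping of $\op{L}$ --- is the right one. The gap is in the step where you pass from the $L^2$ pairing to the mesh-dependent one, $\innprd{\op{H}_h u}{v}\approx\innprd{\op{H}_h u}{v}_h$ (and likewise $\innprd{\op{H}_{2h}\pi u}{\pi v}\approx\innprd{\op{H}_{2h}\pi u}{\pi v}_{2h}$ on the coarse side): these do not cost $O(h^2)$ as you claim. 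Lemma~\ref{lma:innprdapprox} gives an error of $Ch^2\nnorm{\op{H}_h u}_{H^1(\Omega)}\nnorm{v}_{H^1(\Omega)}$, and $v$ is an \emph{arbitrary} element of $\op{V}_h$, so $\nnorm{v}_{H^1(\Omega)}$ is only controlled by $Ch^{-1}\nnorm{v}$ via an inverse inequality; you lose at least one power of $h$ there. (You would also need $H^1$-stability of $\op{H}_h=\op{L}_h^*\op{L}_h$, i.e.\ of the $\innprd{\cdot}{\cdot}_h$-adjoint $\op{L}_h^*$, which is not among the available estimates.) A telltale sign that the bookkeeping is off: your final main term, $\innprd{\op{L}u}{\op{L}v}-\innprd{\op{L}\pi u}{\op{L}\pi v}$, involves only the $L^2$-projector and is $O(h^2)$ on \emph{any} quasi-uniform mesh by~\eqref{eq:projid_approx} and~\eqref{eq:projid_approxL}; if the remaining steps were genuinely $O(h^2)$ you would have proved $p=2$ without local symmetry, contradicting the statement. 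The symmetry hypothesis must enter somewhere, and in your argument it can only be hiding in the inner-product conversions you dismissed as $O(h^2)$.

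The paper's proof avoids this trap by never pairing $\op{H}_h u$ or $\op{H}_{2h}\pi u$ against a raw test function in the wrong inner product. It starts from $\innprd{(\op{H}_h-\op{H}_{2h}\pi)u}{v}_h$ and uses the exact identities $\innprd{\op{H}_h u}{v}_h=\innprd{\op{L}_h u}{\op{L}_h v}_h$ and $\innprd{\op{H}_{2h}\pi u}{v}_h=\innprd{\op{L}_{2h}\pi u}{\op{L}_{2h}\op{R}^w_{2h} v}_{2h}$, the second via the weighted restriction $\op{R}^w_{2h}$ defined by $\innprd{\iota_h w}{v}_h=\innprd{w}{\op{R}^w_{2h}v}_{2h}$. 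Only then does it switch inner products, at which point every argument has the form $\op{L}_{\bullet}(\cdot)$ and so has $H^1$-norm bounded by $C(\op{L})$ times an $L^2$-norm of its preimage, so Lemma~\ref{lma:innprdapprox} really does deliver $O(h^2)$. The price is that the identity is now approximated by $\op{R}^w_{2h}$ rather than by $\pi_{2h}$, and it is precisely~\eqref{eq:restr_approx}--\eqref{eq:restr_approxK} for $\op{R}^w_{2h}$ --- order $h^2$ only on locally symmetric meshes, order $h$ otherwise --- that produces the dichotomy $p=2$ versus $p=1$. To repair your proof, work in $\innprd{\cdot}{\cdot}_h$ throughout and replace $\pi v$ on the coarse side by $\op{R}^w_{2h}v$; this is exactly the paper's decomposition into the five terms $A_1,\dots,A_5$.
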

%
% PROOF
%
\begin{proof} To simplify notation we write $\pi = \pi_{2h}, \op{R} = \op{R}^w_{2h}$.
First we have 
\begin{eqnarray*}
&&\innprd{\op{L}_{2h}^*\op{L}_{2h}\pi u}{v}_h = \innprd{\op{L}_{2h}^*\op{L}_{2h}\pi u}{\op{R} v}_{2h} = \innprd{\op{L}_{2h}\pi u}{\op{L}_{2h}\op{R} v}_{2h}\ ,\ \mathrm{and}\\
&&\innprd{\op{L}_h^*\op{L}_h u}{v}_h = \innprd{\op{L}_h u}{\op{L}_h v}_h\ .
\end{eqnarray*}
Therefore
\begin{eqnarray*}
  \lefteqn{\abs{\innprd{\op{L}_{2h}^*\op{L}_{2h}\pi u}{v}_h - \innprd{\op{L}_h^*\op{L}_h u}{v}_h} =
    \abs{\innprd{\op{L}_{2h}\pi u}{\op{L}_{2h}\op{R} v}_{2h}- \innprd{\op{L}_h u}{\op{L}_h v}_h}}\\
  &\le& \underbrace{\abs{\innprd{\op{L}_{2h}\pi u}{\op{L}_{2h}\op{R} v}_{2h}- \innprd{\op{L}_{2h}\pi u}{\op{L}_{2h}\op{R} v}}}_{A_1}+ 
  \underbrace{\abs{\innprd{\op{L}_{2h}\pi u}{\op{L}_{2h}\op{R} v}- \innprd{\op{L}\pi u}{\op{L}\op{R} v}}}_{A_2}\\
  &&+ \underbrace{\abs{\innprd{\op{L}\pi u}{\op{L}\op{R} v} - \innprd{\op{L} u}{\op{L} v}}}_{A_3}\\
  &&+\underbrace{\abs{\innprd{\op{L} u}{\op{L} v} - \innprd{\op{L}_h u}{\op{L}_h v}}}_{A_4} +
  \underbrace{\abs{\innprd{\op{L}_h u}{\op{L}_h v}-\innprd{\op{L}_h u}{\op{L}_h v}_h}}_{A_5}\ .
\end{eqnarray*}
For $A_1$ we have
\begin{eqnarray*}
A_1 &\stackrel{\eqref{eq:innprdtsapprox}}{\le}& C(\op{L}) h^2 \nnorm{\op{L}_{2h}\pi u}_{H^1(\Omega)}\cdot \nnorm{\op{L}_{2h}\op{R} v}_{H^1(\Omega)} 
\stackrel{\eqref{eq:H1stabdisc}}{\le} C(\op{L}) h^2 \nnorm{\pi u}\cdot \nnorm{\op{R} v}\\
&\stackrel{\eqref{eq:Rstab}}{\le}&C(\op{L}) h^2 \nnorm{u}\cdot \nnorm{v}\ ,
\end{eqnarray*}
and a similar estimate  holds for $A_5$. Also~\eqref{eq:innprdKKH} implies that 
$$\max(A_2, A_4) \le C(\op{L}) h^2 \nnorm{u}\cdot \nnorm{v}\ .$$
For $A_3$ we have
\begin{eqnarray*}
A_3&\le& \abs{\innprd{\op{L}(\pi - I)u}{\op{L}\op{R} v}}+\abs{\innprd{\op{L} u}{(\op{L}(\op{R} - I)v)}} 
\stackrel{\eqref{eq:projid_approxL},~\eqref{eq:restr_approxK}}{\le} C(\op{L}) h^p \nnorm{u}\cdot \nnorm{v}\ .
\end{eqnarray*}
Since $p\le 2$, $A_3$ is the weak link, and we have
$$\abs{\innprd{(\op{H}_{2h}\pi -\op{H}_h)u}{v}_h} = \abs{\innprd{(\op{L}_{2h}^*\op{L}_{2h}\pi -\op{L}_h^*\op{L}_h)u}{v}_h}
\le C(\op{L}) h^p \nnorm{u} \cdot \nnorm{v},\ \forall v\in \op{V}_h\ ,$$
and the conclusion follows from the equivalence of $\nnorm{\cdot}$ with $\enorm{\cdot}_h$.
\end{proof}\\
For $\rho\in W^2_{\infty}(\Omega)$ we denote by
$$
\nnorm{\rho}_{W^2_{\infty}(\Omega)/\mathbb{R}} = \max_{1\le \abs{\alpha}\le 2} \nnorm{\partial^{\alpha} \rho}_{L^{\infty}(\Omega)}\ ,
$$
which effectively is the norm on the quotient space $W^2_{\infty}(\Omega)/\mathbb{R}$.

%%---------------------------------------%
%% LEMMA
\begin{lemma} 
\label{mgipm:lma:dhapprox}
If $\rho\in W^2_{\infty}(\Omega)$ then there exists a constant $C>0$ 
independent of $\rho, h$ so that
\begin{eqnarray}
\label{mgipm:eq:dhapprox}
\nnorm{(\op{D}_{\rho}-\op{D}^h_{\rho_h}) u} \le C h^2 \nnorm{\rho}_{W^2_{\infty}(\Omega)/\mathbb{R}} \nnorm{u}_{H^1(\Omega)}\ ,\ \ \forall u\in\op{V}_h,\  h\in I\ ,
\end{eqnarray}
where $\rho_h = \op{I}_h(\rho)$.
\end{lemma}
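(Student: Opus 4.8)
The plan is to reduce the estimate to an elementwise piecewise-linear interpolation bound, exploiting that every $u\in\op{V}_h$ is affine on each triangle. First I would note that, since $\rho_h=\op{I}_h(\rho)$ agrees with $\rho$ at every interpolation node $P^h_i$ and every $u\in\op{V}_h$ is continuous, the functions $\rho_h u$ and $\rho u$ coincide at the nodes; as $\op{I}_h$ depends only on nodal values this yields $\op{I}_h(\rho_h u)=\op{I}_h(\rho u)$, so that
$$(\op{D}_{\rho}-\op{D}^h_{\rho_h})u=\rho u-\op{I}_h(\rho u)\ .$$
(Here I use that on the polygonal domain $\Omega$ one has $W^2_{\infty}(\Omega)\subset C(\overline{\Omega})$, so that nodal values of $\rho$ make sense and $\rho_h$ is well defined.) The task is thus to bound $\nnorm{\rho u-\op{I}_h(\rho u)}$.

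Next I would argue triangle by triangle. The Lagrange interpolant localises, $(\op{I}_h v)|_T=\op{I}_T(v|_T)$, and the standard piecewise-linear interpolation estimate (Bramble--Hilbert plus a scaling argument, as in \cite{MR2373954}) gives, for each $T\in\op{T}_h$,
$$\nnorm{\rho u-\op{I}_h(\rho u)}_{L^2(T)}\le C h_T^2\,\snorm{\rho u}_{H^2(T)}\ ,$$
with $C$ independent of $h$ and $T$ because the refinement yields a quasi-uniform family. To control $\snorm{\rho u}_{H^2(T)}$ I would use that $u$ is affine on $T$, so $\partial^{\alpha}u=0$ for $\abs{\alpha}=2$; by the Leibniz rule, for $\abs{\alpha}=2$,
$$\partial^{\alpha}(\rho u)=(\partial^{\alpha}\rho)\,u+\sum_{\abs{\beta}=1}\binom{\alpha}{\beta}(\partial^{\beta}\rho)\,(\partial^{\alpha-\beta}u)\ .$$
Taking $L^2(T)$ norms, each term is bounded by $\nnorm{\rho}_{W^2_{\infty}(\Omega)/\mathbb{R}}$ times $\nnorm{u}_{L^2(T)}$ or $\nnorm{\nabla u}_{L^2(T)}$; crucially only derivatives of $\rho$ of orders $1$ and $2$ occur, which is precisely why the quotient norm, rather than the full $W^2_{\infty}$ norm, suffices. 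Hence $\snorm{\rho u}_{H^2(T)}\le C\nnorm{\rho}_{W^2_{\infty}(\Omega)/\mathbb{R}}\nnorm{u}_{H^1(T)}$, and since $h_T\le h$,
$$\nnorm{\rho u-\op{I}_h(\rho u)}_{L^2(T)}\le C h^2\nnorm{\rho}_{W^2_{\infty}(\Omega)/\mathbb{R}}\nnorm{u}_{H^1(T)}\ .$$

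Finally I would square this bound, sum over all $T\in\op{T}_h$, use $\sum_T\nnorm{u}_{H^1(T)}^2=\nnorm{u}_{H^1(\Omega)}^2$, and take a square root to obtain \thref{mgipm:eq:dhapprox}. The constant is independent of $\rho$ because the estimate is linear in $\rho$ after extracting $\nnorm{\rho}_{W^2_{\infty}(\Omega)/\mathbb{R}}$, and independent of $h$ by quasi-uniformity. I do not foresee a genuine obstacle; the only delicate points are the identity $\op{I}_h(\rho_h u)=\op{I}_h(\rho u)$, which eliminates $\rho_h$ from the picture entirely, and keeping track of the Leibniz expansion so that only first- and second-order derivatives of $\rho$ appear, making the estimate invariant under adding constants to $\rho$.
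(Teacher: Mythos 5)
Your proposal is correct and follows essentially the same route as the paper's proof: reduce to the interpolation error $\rho u-\op{I}_h(\rho u)$ (the paper phrases the first step as $\op{D}^h_{\rho}=\op{D}^h_{\rho_h}=\op{I}_h\op{D}_{\rho}$, which is your nodal-agreement observation), then apply the elementwise $H^2$-seminorm interpolation estimate and bound $\abs{\rho u}_{H^2(T)}$ by $C\nnorm{\rho}_{W^2_{\infty}(T)/\mathbb{R}}\nnorm{u}_{H^1(T)}$ using that $u$ is affine on each triangle, and finally sum over elements. Your explicit Leibniz expansion just spells out the step the paper states in one line as~\eqref{mgipm:eq:2normeval}.
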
\\
%% PROOF
\indent{\em Proof}.
Note first that $\op{D}^h_{\rho} =  \op{D}^h_{\rho_h} = \op{I}_h \op{D}_{\rho}$, since only the node values
of $\rho$ enter the definition of the operator $\op{D}^h_{\rho}$. Given $u\in \op{V}_h$, for each triangle $T\in \op{T}_h$ we have
\begin{equation}
\label{mgipm:eq:2normeval}
\abs{\rho \:u}_{H^2(T)}\le C\nnorm{\rho}_{W^2_{\infty}(T)/\mathbb{R}}\cdot\norm{u}_{H^1(T)}\ ,
\end{equation}
because $u$ is linear on $T$. Therefore
\begin{eqnarray*}
\norm{(\op{D}_{\rho}-\op{D}^h_{\rho}) u}&=& \nnorm{\rho\: u - \op{I}_h( \rho\: u)} 
\le  C h^2 \left(\sum_{T\in \op{T}_h}\abs{\rho\: u}_{H^2(T)}^2\right)^{\frac{1}{2}} \\
&\le &C h^2 \left(\sum_{T\in \op{T}_h}\nnorm{\rho}_{W^2_{\infty}(T)/\mathbb{R}}^2 \nnorm{u}_{H^1(T)}^2\right)^{\frac{1}{2}}
\le C h^2 \nnorm{\rho}_{W^2_{\infty}(\Omega)/\mathbb{R}} \nnorm{u}_{H^1(\Omega)}.\qquad\endproof
\end{eqnarray*}

%%---------------------------------------%
%% LEMMA
\begin{lemma} 
\label{mgipm:lma:dhapproxhm1}
If $\rho\in W^2_{\infty}(\Omega)$ then there exists a constant $C>0$ 
independent of $\rho, h$ so that
\begin{eqnarray}
\label{mgipm:eq:dhapproxhm1}
\nnorm{(\op{D}_{\rho}-\op{D}^h_{\rho_h}) u}_{\widetilde{H}^{-1}(\Omega)} \le C h^p \nnorm{\rho}_{W^p_{\infty}(\Omega)/\mathbb{R}} \nnorm{u}\ ,\ \ 
\forall u\in\op{V}_h,\  h\in I\ ,
\end{eqnarray}
where $\rho_h = \op{I}_h(\rho)$, and $p=2$ if the mesh is locally symmetric, otherwise $p=1$.
\end{lemma}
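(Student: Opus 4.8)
\emph{Proof plan.}
Recall from the proof of Lemma~\ref{mgipm:lma:dhapprox} that for $u\in\op{V}_h$ one has $(\op{D}_{\rho}-\op{D}^h_{\rho_h})u=\rho u-\op{I}_h(\rho u)=:w$, and that $w$ is unchanged if $\rho$ is replaced by $\rho+c$, $c\in\mathbb{R}$; hence we may assume $\nnorm{\rho}_{W^p_{\infty}(\Omega)}\le C(\Omega)\nnorm{\rho}_{W^p_{\infty}(\Omega)/\mathbb{R}}$ throughout. On a general mesh ($p=1$) the bound is immediate from $\nnorm{w}_{\Hneg{1}(\Omega)}\le\nnorm{w}$ together with a direct elementwise estimate: on each $T\in\op{T}_h$, subtracting the value of $\rho$ at a vertex of $T$ and using that the nodal interpolation error of a Lipschitz function is controlled by $h_T$ times its gradient, one obtains $\nnorm{w}_{L^{\infty}(T)}\le C h_T\nnorm{\nabla\rho}_{L^{\infty}(T)}\nnorm{u}_{L^{\infty}(T)}$ (after an inverse inequality to dispose of $\nabla u$); summing over $T$ and using quasi-uniformity gives $\nnorm{w}\le C h\nnorm{\rho}_{W^1_{\infty}(\Omega)/\mathbb{R}}\nnorm{u}$. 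This is just the cheap form of Lemma~\ref{mgipm:lma:dhapprox} in which an inverse inequality trades the $H^1$-seminorm of $u$ for a power of $h$, carried out so that only $\nnorm{\rho}_{W^1_{\infty}(\Omega)/\mathbb{R}}$ enters.

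So the real content is the locally symmetric case, $p=2$, which I would attack by duality: $\nnorm{w}_{\Hneg{1}(\Omega)}=\sup\{\innprd{w}{\phi}:\phi\in H^1_0(\Omega),\ \nnorm{\phi}_{H^1(\Omega)}\le 1\}$. Write $w=(\rho-\rho_h)u+e$ with $\rho_h=\op{I}_h\rho$ and $e=\rho_h u-\op{I}_h(\rho_h u)$. The first summand is disposed of crudely: $\nnorm{(\rho-\rho_h)u}\le\nnorm{\rho-\rho_h}_{L^{\infty}(\Omega)}\nnorm{u}\le C h^2\nnorm{\rho}_{W^2_{\infty}(\Omega)/\mathbb{R}}\nnorm{u}$, already $O(h^2)$ in $L^2$ and hence in $\Hneg{1}(\Omega)$. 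For $e$, split $\phi=\phi_h+(\phi-\phi_h)$ with $\phi_h\in\op{V}_h$ a stable $L^2$-bounded quasi-interpolant (e.g.\ the Scott--Zhang interpolant), so that $\nnorm{\phi-\phi_h}\le C h\nnorm{\phi}_{H^1(\Omega)}$ and $\nnorm{\phi_h}_{H^1(\Omega)}\le C\nnorm{\phi}_{H^1(\Omega)}$; since $\rho_h$ and $u$ are linear on each $T$, the same elementwise argument as above gives $\nnorm{e}\le C h\nnorm{\nabla\rho}_{L^{\infty}(\Omega)}\nnorm{u}$, whence $\innprd{e}{\phi-\phi_h}=O(h^2)$. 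Everything therefore reduces to proving, on a locally symmetric mesh,
$$
\snorm{\innprd{e}{\phi_h}}\le C h^2\,\nnorm{\rho}_{W^2_{\infty}(\Omega)/\mathbb{R}}\,\nnorm{u}\,\nnorm{\phi_h}_{H^1(\Omega)}\ ,\qquad\phi_h\in\op{V}_h\ .
$$

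This inequality is the crux, and is superconvergence in character. I would expand $\innprd{e}{\phi_h}=\sum_P\phi_h(P)\int_\Omega e\,\varphi_P$ over the interior nodes $P$, use that $e$ is, on each $T$, the explicit quadratic vanishing at the three nodes of $T$ assembled from the nodal increments of $u$ and of $\rho$ along the edges of $T$, expand those $\rho$-increments about the star centre $P$ (the $O(h^2\nnorm{D^2\rho}_{L^{\infty}})$ remainders summing directly to $O(h^2)$, with no symmetry needed), and then pair the triangles of each star by the reflection $r_P(x)=2P-x$, which maps the star onto itself and fixes $\varphi_P$. Combining this symmetry with~\eqref{eq:locsymmconseq} and a discrete summation by parts over the nodes, weighted by the slowly varying $\phi_h(P)$, is what produces the missing power of $h$. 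Structurally this is the mechanism behind Theorem~6.6 of~\cite{MR2429872} (which underlies Lemma~\ref{lma:manyresults}(c)); the one new ingredient is the bilinear dependence on the pair $(\rho,u)$.

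The step I expect to be the main obstacle is exactly this one. Elementwise, $e=\rho_h u-\op{I}_h(\rho_h u)$ is only $O(h)$ rather than $O(h^2)$, because $\rho_h u$ has second derivatives of size $O(h^{-1}\nnorm{u}_{L^{\infty}})$ coming from the factor $\nabla u$ --- an inverse-inequality barrier that no purely local estimate can cross. The extra power of $h$ must therefore be extracted from cancellation across the mesh, which is available only under local symmetry; arranging that cancellation so that it lands precisely at $h^2$, and with the quotient norm $\nnorm{\rho}_{W^2_{\infty}(\Omega)/\mathbb{R}}$ rather than something weaker or half a power worse, is the delicate part. On an unstructured mesh no such cancellation is at hand, which is precisely why the rate there is only $p=1$.
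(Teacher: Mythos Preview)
Your duality set-up is right, but the decomposition you choose makes the job harder than it needs to be, and the ``crux'' you correctly identify---the estimate on $\innprd{e}{\phi_h}$ via star-by-star cancellation plus a discrete summation by parts---is precisely what the paper manages to avoid.

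The paper neither splits $w=(\rho-\rho_h)u+e$ nor introduces a quasi-interpolant of the test function. Instead it expands $u$ (not the test function) in the nodal basis and uses the elementary identity
\[
\rho\,\varphi_i^h-\op{I}_h(\rho\,\varphi_i^h)=\varphi_i^h\bigl(\rho-\rho(P_i^h)\bigr),
\]
so that directly
\[
w=\rho u-\op{I}_h(\rho u)=\sum_i u(P_i^h)\,\varphi_i^h\bigl(\rho-\rho(P_i^h)\bigr).
\]
Each summand already carries one factor of $h$ from $\rho-\rho(P_i^h)$ and is supported on the star $S_i$. Pairing with an arbitrary $v\in H_0^1(\Omega)$, the paper splits $v$ \emph{locally} on each $S_i$ into its mean $v_i=\mathrm{area}(S_i)^{-1}\int_{S_i}v$ and the oscillation $v-v_i$. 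The oscillation supplies the second factor of $h$ via the Poincar\'e inequality $\nnorm{v-v_i}_{L^2(S_i)}\le Ch\,\abs{v}_{H^1(S_i)}$; on the constant part $v_i$ one invokes~\eqref{eq:locsymmconseq} directly, subtracting the linear Taylor polynomial of $\rho$ at $P_i^h$ to leave an $O(h^2\abs{\rho}_{W^2_\infty})$ remainder. A single Cauchy--Schwarz over $i$ then finishes.

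What this buys: the local symmetry is applied once per node to $\int_{S_i}\varphi_i^h\bigl(\rho-\rho(P_i^h)-d\rho_{P_i^h}(\cdot-P_i^h)\bigr)$, with $\rho$ smooth, so Taylor's theorem applies cleanly. There is no piecewise-linear $\rho_h$ to track, no elementwise quadratic $e$ to parameterise, and no summation by parts to orchestrate. Your plan of pairing reflected triangles while simultaneously tracking the nodal increments of both $u$ and $\rho_h$, and then extracting $\nnorm{\phi_h}_{H^1}$ through neighboring-node differences, may be workable, but it is all bypassed by two choices: expand $u$ rather than $\phi$, and split $v$ into local means rather than projecting it onto $\op{V}_h$.
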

%% PROOF
\begin{proof} 
We focus on the situation when the mesh is locally symmetric and leave the general case as an exercise.
Let $u\in\op{V}_h, v\in H_0^1(\Omega)$ be arbitrary, and denote 
by $S_i=\mathrm{supp}(\varphi_i^h)$. The constant $C$ is assumed to be independent of $u, v, \rho, h$.
First remark that each triangle in $\op{T}_h$ lies in at most
three of the sets $S_i$ and that
\begin{equation}
\label{eq:diamSi}
\mathrm{diam}(S_i) \le C h,\ \ 1\le i\le N_h\ 
\end{equation}
due to the quasi-uniformity for the meshes. Also note that 
\begin{equation}
\label{eq:interpidentprd}
\rho \varphi_i^h - \op{I}_h(\rho \varphi_i^h) = \varphi_i^h(\rho-\rho(P_i^h)),\ \ 1\le i\le N_h\ .
\end{equation}
Further we define $v_i = \frac{1}{\mathrm{area}(S_i)}\int_{S_i} v$, for $1\le i\le N_h$,
and $u_i=u(P_i^h)$. Then \mbox{$u=\sum_{i=1}^{N_h} u_i \varphi^h_i$} and 
\begin{eqnarray*}
{\Abs{\innprd{(\op{D}_{\rho}-\op{D}^h_{\rho_h}) u}{v}} =  \Abs{\int_{\Omega}\left(\rho u-\op{I}_h(\rho u)\right) v} 
\stackrel{\eqref{eq:interpidentprd}}{=}
\Abs{\sum_{i=1}^{N_h} u_i \left(\int_{S_i}\varphi_i^h (\rho-\rho(P_i^h)) v\right)}}\\
\le
\sum_{i=1}^{N_h} \abs{u_i} \left(\Abs{\int_{S_i}\varphi_i^h (\rho-\rho(P_i^h)) (v-v_i)} + \Abs{\int_{S_i}\varphi_i^h (\rho-\rho(P_i^h)) v_i\right)}\ .
\end{eqnarray*}
For the first term in the sum above
\begin{eqnarray*}
\Abs{\int_{S_i}\varphi_i^h (\rho-\rho(P_i^h)) (v-v_i)}&
\stackrel{}{\le}& \nnorm{\varphi_i^h}\cdot \nnorm{\rho-\rho(P_i^h)}_{L^{\infty}(S_i)}\cdot \nnorm{v-v_i}_{L^2(S_i)}\\
&\stackrel{\eqref{eq:diamSi}}{\le}& C h^2 \nnorm{\varphi_i^h}\cdot \abs{\rho}_{W^1_{\infty}(S_i)}\cdot \abs{v}_{H^1(S_i)}\ .
\end{eqnarray*}
For the second term in the sum we take advantage of the local grid symmetry:
\begin{eqnarray*}
\Abs{\int_{S_i}\varphi_i^h (\rho-\rho(P_i^h)) v_i}& \stackrel{\eqref{eq:locsymmconseq}}{=} &\Abs{\int_{S_i}v_i\: \varphi_i^h(x) (\rho(x)-\rho(P_i^h)-d\rho_{P_i^h}(x-P_i^h))dx}\\
&\le& \nnorm{v_i\: \varphi_i^h}_{L^1(S_i)}\cdot \nnorm{\rho-\rho(P_i^h)-d\rho_{P_i^h}(x-P_i^h)}_{L^{\infty}(S_i)}\\
&\le& C h^2 \nnorm{v_i}_{L^2(S_i)} \nnorm{\varphi_i^h} \cdot \abs{\rho}_{W^2_{\infty}(S_i)}\ .
\end{eqnarray*}
Since $\norm{v_i}_{L^2(S_i)} \le \norm{v}_{L^2(S_i)}$ we now have
\begin{eqnarray*}
\lefteqn{\Abs{\int_{\Omega}\left(\rho u-\op{I}_h(\rho u)\right) v}} \\
&\le& C h^2 \sum_{i=1}^{N_h} \abs{u_i} \nnorm{\varphi_i^h} 
\left(\abs{\rho}_{W^1_{\infty}(S_i)}\cdot \abs{v}_{H^1(S_i)} + \nnorm{v}_{L^2(S_i)} \abs{\rho}_{W^2_{\infty}(S_i)}\right)\\
&\le & C h^2 \nnorm{\rho}_{W^2_{\infty}(\Omega)/\mathbb{R}}\sum_{i=1}^{N_h} \abs{u_i}\cdot \nnorm{\varphi_i^h} \cdot \nnorm{v}_{H^1(S_i)}\\
&\le & C h^2 \nnorm{\rho}_{W^2_{\infty}(\Omega)/\mathbb{R}}\left(\sum_{i=1}^{N_h} \abs{u_i}^2\cdot \nnorm{\varphi_i^h}^2 \right)^{\frac{1}{2}}
\cdot \left(\sum_{i=1}^{N_h} \nnorm{v}^2_{H^1(S_i)}\right)^{\frac{1}{2}}\\
&\le& C  h^2 \nnorm{\rho}_{W^2_{\infty}(\Omega)/\mathbb{R}} \nnorm{u}\cdot \nnorm{v}_{H^1(\Omega)}\ ,
\end{eqnarray*}
where for the last inequality we used the quasi-uniformity of the mesh and the fact the each triagle is in at most three of the sets $S_i$.
The conclusion follows after dividing by $\nnorm{v}_{H^1(\Omega)}$ and taking the $\sup$ over all $v\in H_0^1(\Omega)$.
\end{proof}

%%---------------------------------------%
%
% PROPOSITION
%
\begin{proposition}
\label{prop:conditionimplicationKL}
If the operators $\op{K}$, $\op{K}_h$ satisfy Condition~\textnormal{\ref{mgipm:cond:condsmooth}} with $p=2$ on the locally-symmetric
meshes $\op{T}_h$ with the weights given by~\thref{eq:weighdef}, then $\op{L}$, $\op{L}_h$ also satisfy Condition~\textnormal{\ref{mgipm:cond:condsmooth}} with 
$$C(\op{L})= C(\op{K}) \nnorm{\lambda^{-\frac{1}{2}}}_{W^2_{\infty}(\Omega)}\ .$$ If the meshes are not locally-symmetric, then the power of $h$ in 
Condition~\textnormal{\ref{mgipm:cond:condsmooth}} \textnormal{[b]} for the operators $\op{L}, \op{L}_h$ is $1$ for both $m=0, 1$. 
\end{proposition}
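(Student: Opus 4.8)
The plan is to check the two halves of Condition~\ref{mgipm:cond:condsmooth} for $\op{L}=\op{K}\,\op{D}_{1/\sqrt{\lambda}}$ and $\op{L}_h=\op{K}_h\,\op{D}^h_{1/\sqrt{\lambda_h}}$ separately, reducing each to the SAC for the pair $\op{K},\op{K}_h$ combined with the interpolation estimates of Lemmas~\ref{mgipm:lma:dhapprox} and~\ref{mgipm:lma:dhapproxhm1}. Set $\rho=\lambda^{-1/2}$; since $\lambda$ may be taken in $C^2(\overline{\Omega})\subset W^2_{\infty}(\Omega)$ with $\lambda\ge\beta>0$, the chain rule gives $\rho\in W^2_{\infty}(\Omega)$, and $\op{D}_{1/\sqrt{\lambda}}$ is bounded on $L^2$ with operator norm $\nnorm{\rho}_{L^\infty(\Omega)}\le\nnorm{\lambda^{-1/2}}_{W^2_{\infty}(\Omega)}$. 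Part~\textbf{[a]} is then immediate: by~\eqref{mgipm:cond:par_smooth} for $\op{K}$ one has, for $m=0,1,2$ and $u\in L^2(\Omega)$,
\[ \nnorm{\op{L}u}_{H^m(\Omega)}=\nnorm{\op{K}(\rho u)}_{H^m(\Omega)}\le C(\op{K})\nnorm{\rho u}\le C(\op{K})\nnorm{\lambda^{-1/2}}_{W^2_{\infty}(\Omega)}\nnorm{u}, \]
which already produces the asserted constant $C(\op{L})=C(\op{K})\nnorm{\lambda^{-1/2}}_{W^2_{\infty}(\Omega)}$.

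For part~\textbf{[b]}, fix $u\in\op{V}_h$. As in the proof of Lemma~\ref{mgipm:lma:dhapprox}, only the nodal values of $1/\sqrt{\lambda_h}$ enter the definition of $\op{D}^h_{1/\sqrt{\lambda_h}}$, and these agree with those of $\rho$, so $\op{D}^h_{1/\sqrt{\lambda_h}}u=\op{I}_h(\rho u)$; hence
\[ \op{L}u-\op{L}_h u = \op{K}\bigl(\rho u-\op{I}_h(\rho u)\bigr) + (\op{K}-\op{K}_h)\,\op{I}_h(\rho u) =: T_1+T_2 . \]
The term $T_2$ is the benign one: $\op{I}_h(\rho u)\in\op{V}_h$, and by Lemma~\ref{mgipm:lma:dhapprox} together with the inverse inequality $\nnorm{v}_{H^1(\Omega)}\le Ch^{-1}\nnorm{v}$ on the quasi-uniform meshes, $\nnorm{\op{I}_h(\rho u)}\le\nnorm{\rho u}+\nnorm{\rho u-\op{I}_h(\rho u)}\le C\nnorm{\lambda^{-1/2}}_{W^2_{\infty}(\Omega)}\nnorm{u}$; then the consistency bound~\eqref{mgipm:cond:consist} for $\op{K}$ gives $\nnorm{T_2}_{H^m(\Omega)}\le C(\op{K})h^{2-m}\nnorm{\op{I}_h(\rho u)}\le C(\op{L})h^{2-m}\nnorm{u}$ for $m=0,1$, with no assumption on the mesh geometry.

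The term $T_1=\op{K}(\rho u-\op{I}_h(\rho u))$ carries the mesh geometry. For $m=1$ I would use~\eqref{mgipm:cond:par_smooth} for $\op{K}$ (with $m=1$), then Lemma~\ref{mgipm:lma:dhapprox} and the inverse inequality:
\[ \nnorm{T_1}_{H^1(\Omega)}\le C(\op{K})\nnorm{\rho u-\op{I}_h(\rho u)}\le C(\op{K})\,h^2\nnorm{\rho}_{W^2_{\infty}(\Omega)/\mathbb{R}}\nnorm{u}_{H^1(\Omega)}\le C(\op{L})\,h\,\nnorm{u}, \]
again for any mesh. For $m=0$ the crude $O(h)$ bound on $\nnorm{\rho u-\op{I}_h(\rho u)}$ is too weak; instead I would invoke the negative-order smoothing of $\op{K}$, $\nnorm{\op{K}w}\le C(\op{K})\nnorm{w}_{\Hneg{1}}$ for $w\in L^2(\Omega)$ --- which follows from~\eqref{mgipm:cond:par_smooth} exactly as for $\op{L}$ in Lemma~\ref{lma:manyresults}(b) (cf.~\cite{MR2429872}, Corollary~6.2) --- and couple it with Lemma~\ref{mgipm:lma:dhapproxhm1}: $\nnorm{T_1}\le C(\op{K})\nnorm{\rho u-\op{I}_h(\rho u)}_{\Hneg{1}}\le C(\op{K})h^p\nnorm{\rho}_{W^p_{\infty}(\Omega)/\mathbb{R}}\nnorm{u}$, with $p=2$ on locally symmetric meshes and $p=1$ otherwise.

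Assembling the estimates, on locally symmetric meshes $\nnorm{\op{L}u-\op{L}_h u}_{H^m(\Omega)}\le C(\op{L})h^{2-m}\nnorm{u}$ for $m=0,1$ with $C(\op{L})=C(\op{K})\nnorm{\lambda^{-1/2}}_{W^2_{\infty}(\Omega)}$, which together with part~\textbf{[a]} is exactly Condition~\ref{mgipm:cond:condsmooth}; when the meshes are not locally symmetric only the degraded $m=0$ bound for $T_1$ is available, so the exponent of $h$ in~\eqref{mgipm:cond:consist} for $\op{L},\op{L}_h$ drops to $1$ for both $m=0$ and $m=1$. I expect the one genuinely delicate step to be the $m=0$ estimate for $T_1$: it rests on the cancellation~\eqref{eq:locsymmconseq} built into Lemma~\ref{mgipm:lma:dhapproxhm1} and on coupling it with the correct ($\Hneg{1}$) smoothing property of $\op{K}$; the rest is a routine combination of the SAC for $\op{K}$, the inverse inequality, and the $W^2_{\infty}$ control of $\lambda^{-1/2}$.
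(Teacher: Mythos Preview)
Your proof is correct and follows essentially the same route as the paper: the same splitting $\op{L}-\op{L}_h=\op{K}(\op{D}_{\rho}-\op{D}^h_{\rho})+(\op{K}-\op{K}_h)\op{D}^h_{\rho}$, the same use of Lemma~\ref{mgipm:lma:dhapprox} plus an inverse estimate for the $H^1$ bound, and the same pairing of the $\Hneg{1}$-smoothing of $\op{K}$ with Lemma~\ref{mgipm:lma:dhapproxhm1} for the $L^2$ bound on locally symmetric meshes. The only cosmetic difference is that the paper bounds $\nnorm{\op{D}^h_{1/\sqrt{\lambda}}u}$ directly by $\nnorm{\lambda^{-1/2}}_{L^\infty}\nnorm{u}$ (via the discrete norm equivalence), whereas you reach the same estimate through the triangle inequality and the interpolation bound.
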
\\
%
% PROOF
%
\indent{\em Proof}.
Note that for $\rho\in L^{\infty}(\Omega), \nnorm{\op{D}_{\rho }u} \le \nnorm{\rho}_{L^{\infty}(\Omega)}\cdot \nnorm{u}$,
therefore
\begin{eqnarray*}
\nnorm{\op{K} \op{D}_{1/\sqrt{\lambda}} u}_{H^m(\Omega)} \le C(\op{K}) \nnorm{\op{D}_{1/\sqrt{\lambda}} u}\le 
C(\op{K}) \nnorm{\lambda^{-\frac{1}{2}}}_{L^{\infty}(\Omega)}\cdot \nnorm{u}\ ,\ \ m=0, 1, 2,
\end{eqnarray*}
which implies the smoothing condition~\eqref{mgipm:cond:par_smooth}.
For $u\in \op{V}_h$
\begin{eqnarray*}
\lefteqn{\nnorm{(\op{K} \op{D}_{1/\sqrt{\lambda}} - \op{K}_h \op{D}^h_{1/\sqrt{\lambda}}) u}_{H^1(\Omega)}}\\
& \le&
\nnorm{\op{K} (\op{D}_{1/\sqrt{\lambda}}-\op{D}^h_{1/\sqrt{\lambda}})u}_{H^1(\Omega)} + \nnorm{(\op{K} - \op{K}_h) \op{D}^h_{1/\sqrt{\lambda}} u}_{H^1(\Omega)}\\
&\stackrel{\eqref{mgipm:cond:par_smooth},\eqref{mgipm:cond:consist}}{\le}&
C(\op{K})\nnorm{(\op{D}_{1/\sqrt{\lambda}}-\op{D}^h_{1/\sqrt{\lambda}})u} + C(\op{K}) h\nnorm{\op{D}^h_{1/\sqrt{\lambda}} u}\\
&\stackrel{\eqref{mgipm:eq:dhapprox}}{\le}&
C(\op{K}) h^2 \nnorm{\lambda^{-\frac{1}{2}}}_{W^2_{\infty}(\Omega)/\mathbb{R}} \nnorm{u}_{H^1(\Omega)} + 
C(\op{K}) h\nnorm{\lambda^{-\frac{1}{2}}}_{L^{\infty}(\Omega)} \nnorm{ u}\\
&\le& C(\op{K}) h\nnorm{\lambda^{-\frac{1}{2}}}_{W^2_{\infty}(\Omega)} \nnorm{ u}\ ,
\end{eqnarray*}
where in the last inequality we have used an inverse estimate. This proves~\eqref{mgipm:cond:consist} for $\op{K}, \op{K}_h$ with $m=1$ and concludes 
the proof for the non-locally-symmetric case. 
For $m=2$ and locally-symmetric mesh
\begin{eqnarray*}
\lefteqn{\nnorm{(\op{K} \op{D}_{1/\sqrt{\lambda}} - \op{K}_h \op{D}^h_{1/\sqrt{\lambda}}) u}}\\
& \le&
\nnorm{\op{K} (\op{D}_{1/\sqrt{\lambda}}-\op{D}^h_{1/\sqrt{\lambda}})u} + \nnorm{(\op{K} - \op{K}_h) \op{D}^h_{1/\sqrt{\lambda}} u}\\
&\stackrel{\eqref{eq:Hm2stabcont},\eqref{mgipm:cond:consist}}{\le}&
C(\op{K})\nnorm{(\op{D}_{1/\sqrt{\lambda}}-\op{D}^h_{1/\sqrt{\lambda}})u}_{\Hneg{1}(\Omega)} + C(\op{K}) h^2\nnorm{\op{D}^h_{1/\sqrt{\lambda}} u}\\
&\stackrel{\eqref{mgipm:eq:dhapproxhm1}}{\le}&
C(\op{K}) h^2 \nnorm{\lambda^{-\frac{1}{2}}}_{W^2_{\infty}(\Omega)/\mathbb{R}} \nnorm{u} + C(\op{K}) h^2\nnorm{\lambda^{-\frac{1}{2}}}_{L^{\infty}(\Omega)} \nnorm{ u}\\
&\le& C(\op{K}) h^2\nnorm{\lambda^{-\frac{1}{2}}}_{W^2_{\infty}(\Omega)} \nnorm{ u}.\qquad\endproof
\end{eqnarray*}

We conclude the analysis of the two-grid preconditioner with the computation of the \emph{spectral distance} (SD) $d_{\sigma}$ 
between $(\op{G}_h)^{-1}$ and the two-grid preconditioner $\op{S}_h$ defined in~\eqref{mgipm:eq:twogridprecinv}. This step facilitates
a smooth transition from the two-grid analysis to the multigrid analysis of the next section.
Given a Hilbert space $(\op{X}, \innprd{\cdot}{\cdot})$ we denote by $\mathfrak{L}_+(\op{X})$ the set of operators with positive definite symmetric part:
$$ \mathfrak{L}_+(\op{X})= \left\{T\in \mathfrak{L}(\op{X}): \innprd{T u}{u} > 0,\ \ \forall u\in
\op{X}\setminus\{0\}\right\}\ .$$  
First we define the joined numerical range of $S, T\in\mathfrak{L}_+(\op{X})$ by
\begin{eqnarray*}
W(S, T) = \left\{ \frac{\innprd{S_{\mathbb C} w}{w}}{\innprd{T_{\mathbb C} w}{w}}\ :\ w \in \op{X}^{\mathbb{C}}\setminus \{0\}\right\}\ ,
\end{eqnarray*}
where $T_{\mathbb C}(u+{\bf i}v) = T(u)+{\bf i}T(v)$ is the complexification of $T$.
Note that if $T$ is symmetric positive definite, then $W(S, T)$ is simply the numerical range of $T^{-\frac{1}{2}} S T^{-\frac{1}{2}}$.
The spectral distance between $S, T \in \mathfrak{L}_+(\op{X})$, introduced in~\cite{MR2429872} as a measure of spectral equivalence between $S$ and $T$, 
is defined by
\begin{eqnarray*}
d_{\sigma}(S, T)& =& \sup\{\abs{\ln z}\ :\  z \in W(S, T) \}\ ,
\end{eqnarray*}
where $\ln$ is the branch of the logarithm corresponding to 
$\mathbb{C}\setminus (-\infty, 0]$.
Following Lemma 3.2 in~\cite{MR2429872}, if $W(S,T)\subseteq \op{B}_{\alpha}(1) = \{z\in\mathbb{C}\ :\ \abs{z-1} < \alpha\}$ with $\alpha\in (0,1)$, then
\begin{equation}
\label{eq:distineq}
d_{\sigma}(S, T)\le \frac{\abs{\ln (1-\alpha)}}{\alpha} \sup\{|z-1|\  : \ z\in W(S,T)\}\ ,
\end{equation}
which offers a practical way to estimate the spectral distance when it is small.
The spectral distance serves 
both as a means to quantify the quality of a preconditioner
and also as a convenient analysis tool for multigrid algorithms. Essentially, if two operators $S, T$ satisfy
$$1-\delta\le \Abs{\frac{\innprd{S_{\mathbb C} w}{w}}{\innprd{T_{\mathbb C} w}{w}}}\le 1+ \delta\ ,\  \ \forall w \in \op{X}^{\mathbb{C}}\setminus \{0\},$$
with $\delta \ll 1$, then $d_{\sigma}(S, T) \approx \delta$. 
If $N\approx G^{-1}$ is a preconditioner for $G$, then both $d_{\sigma}(N, G^{-1})$ and $d_{\sigma}(N^{-1}, G)$ (quantities which are
are equal if $G, N$ are symmetric) are shown to control the spectral radius $\rho(I-N G)$ (see Lemma~\ref{lma:dist_ctrl_radius} in   Appendix~\ref{sec:spec_dist} 
for a precise formulation), which is an accepted quality-measure for a preconditioner. The advantage of using $d_{\sigma}$ 
over $\rho(I-N G)$ is that the former is a true distance function.
%
%% TWO-GRID APPROXIMATION THEOREM
%
\begin{theorem} 
\label{mgipm:th:twogrid}
If the operators $\op{K}$, $\op{K}_h$ satisfy Condition~\textnormal{\ref{mgipm:cond:condsmooth}} on the locally-symmetric
meshes $\op{T}_h$ with the weights given by~\thref{eq:weighdef}, and $\lambda\in W^2_{\infty}(\Omega)$ satisfies 
\mbox{$\op{I}_h \lambda = \lambda_h$},  there exist $C, \delta>0$  independent of $h$, $\lambda$ so that for 
\mbox{$h^2 \nnorm{\lambda^{-\frac{1}{2}}}_{W^2_{\infty}(\Omega)} \le \delta$} 
\begin{equation}
\label{mgipm:eq:NGest}
d_{\sigma}\left(\op{G}_h^{-1}, \op{S}_h\right) \le C h^2 \nnorm{\lambda^{-\frac{1}{2}}}_{W^2_{\infty}(\Omega)}\ ,
\end{equation}
where $\op{G}_h$  and $\op{S}_h$ are defined as in~\thref{mgipm:eq:fineopdef} and~\thref{mgipm:eq:twogridprecinv}. 
If the meshes are not locally-symmetric then the power of $h$ in\textnormal{~\eqref{mgipm:eq:NGest}} is $1$.
\end{theorem}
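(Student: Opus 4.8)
The plan is to chain the results of this section to get a norm estimate on $\op{G}_h-\op{N}_h$, and then convert it into the spectral-distance bound~\eqref{mgipm:eq:NGest} via the elementary inequality~\eqref{eq:distineq}. \emph{Step 1 (norm distance).} First I would invoke Proposition~\ref{prop:conditionimplicationKL}: under the stated hypotheses $\op{L},\op{L}_h$ satisfy the SAC (Condition~\ref{mgipm:cond:condsmooth}) with $C(\op{L})=C(\op{K})\nnorm{\lambda^{-1/2}}_{W^2_{\infty}(\Omega)}$ and with $p=2$ on locally symmetric meshes ($p=1$ otherwise). Feeding this into Proposition~\ref{prop:conditionimplication} yields Condition~\ref{mgipm:cond:cond1} for $\op{H}_h=\op{L}_h^*\op{L}_h$ with $C(\op{H})=C(\op{L})$ and the same $p$, and Lemma~\ref{lma:GNapprox} then gives
$$\nnorm{\op{G}_h-\op{N}_h}\le C(\op{K})\nnorm{\lambda^{-1/2}}_{W^2_{\infty}(\Omega)}\,h^p\ ,$$
and hence, up to an $h$-independent factor, the same bound for $\enorm{\op{G}_h-\op{N}_h}_h$ via the operator-norm form of~\eqref{eq:equivmeshipl2}.

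\emph{Step 2 (uniform bounds).} Next I would record the bounds needed to pass from norms to spectra, taking care to keep them independent of $\lambda$. Since $\lambda_h=(\mu_h/w_h)+\beta\ge\beta>0$ at the nodes, a direct computation gives $\enorm{\op{D}^h_{1/\sqrt{\lambda_h}}}_h\le\beta^{-1/2}$; combined with $\enorm{\op{K}_h}_h\le C\,C(\op{K})$ (a consequence of the SAC for $\op{K}$) this yields $\enorm{\op{H}_h}_h=\enorm{\op{L}_h}_h^2\le\widetilde{C}$, with $\widetilde{C}$ depending only on $C(\op{K}),\beta$ and \emph{not} on $\lambda$ or $h$. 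Therefore $\op{G}_h=I+\op{H}_h$ is $\innprd{\cdot}{\cdot}_h$-symmetric with spectrum in $[1,1+\widetilde{C}]$, so $\enorm{\op{G}_h^{-1}}_h\le1$ and $\innprd{(\op{G}_h^{-1})_{\mathbb{C}}w}{w}_h\ge(1+\widetilde{C})^{-1}\enorm{w}_h^2$ for $w\ne0$. Likewise, using $\op{S}_h=\rho_{2h}+\op{G}_{2h}^{-1}\pi_{2h}$, the facts that $\pi_{2h},\rho_{2h}$ are $L^2$-orthogonal projectors, $\enorm{\op{G}_{2h}^{-1}}_{2h}\le1$, and~\eqref{eq:equivmeshipl2}, one gets $\enorm{\op{S}_h}_h\le C_1$ uniformly in $h,\lambda$.

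\emph{Step 3 (spectral distance).} Writing $\op{S}_h-\op{G}_h^{-1}=\op{N}_h^{-1}(\op{G}_h-\op{N}_h)\op{G}_h^{-1}$ and combining Steps 1--2,
$$\enorm{\op{S}_h-\op{G}_h^{-1}}_h\le\enorm{\op{S}_h}_h\,\enorm{\op{G}_h-\op{N}_h}_h\,\enorm{\op{G}_h^{-1}}_h\le C_2\,h^p\,\nnorm{\lambda^{-1/2}}_{W^2_{\infty}(\Omega)}\ ,$$
with $C_2$ independent of $h,\lambda$. For $w\in\op{V}_h^{\mathbb{C}}\setminus\{0\}$ the denominator of the quotient defining $W(\op{S}_h,\op{G}_h^{-1})$ is real and $\ge(1+\widetilde{C})^{-1}\enorm{w}_h^2$, so
$$\left|\frac{\innprd{(\op{S}_h)_{\mathbb{C}}w}{w}_h}{\innprd{(\op{G}_h^{-1})_{\mathbb{C}}w}{w}_h}-1\right|\le(1+\widetilde{C})\,\enorm{\op{S}_h-\op{G}_h^{-1}}_h\le C_3\,h^p\,\nnorm{\lambda^{-1/2}}_{W^2_{\infty}(\Omega)}\ ,$$
with $C_3$ independent of $h,\lambda$. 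Choosing $\delta=1/(2C_3)$, the hypothesis $h^2\nnorm{\lambda^{-1/2}}_{W^2_{\infty}(\Omega)}\le\delta$ forces the right-hand side $\le\tfrac12$, whence $W(\op{S}_h,\op{G}_h^{-1})\subseteq\op{B}_{1/2}(1)$ (in particular $\op{S}_h\in\mathfrak{L}_+(\op{V}_h)$), and~\eqref{eq:distineq} with $\alpha=\tfrac12$ gives $d_{\sigma}(\op{S}_h,\op{G}_h^{-1})\le 2\ln2\cdot C_3\,h^p\,\nnorm{\lambda^{-1/2}}_{W^2_{\infty}(\Omega)}$; since $d_{\sigma}$ is symmetric this equals $d_{\sigma}(\op{G}_h^{-1},\op{S}_h)$, which is~\eqref{mgipm:eq:NGest} with $p=2$. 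The non-locally-symmetric case is identical with $h^2$ replaced by $h$ throughout.

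The genuine technical content of the theorem already resides in Propositions~\ref{prop:conditionimplication} and~\ref{prop:conditionimplicationKL} (established above), so the remaining work is essentially bookkeeping. Its one delicate point is that the mesh-dependent inner product $\innprd{\cdot}{\cdot}_h$ makes $\op{S}_h$ and $\op{N}_h$ non-symmetric, so one cannot read off spectral information directly but must route everything through the equivalence~\eqref{eq:equivmeshipl2}; a second, easily overlooked point is that one must keep all constants independent of $\lambda$, which is why in Step 2 the bound on $\enorm{\op{H}_h}_h$ is obtained from $\lambda\ge\beta$ rather than from the (generally much larger, $\lambda$-dependent) constant $C(\op{H})$ of Condition~\ref{mgipm:cond:cond1}.
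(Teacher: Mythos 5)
Your argument is correct, and its core is the paper's: both proofs chain Proposition~\ref{prop:conditionimplicationKL}, Proposition~\ref{prop:conditionimplication} and Lemma~\ref{lma:GNapprox} to obtain $\nnorm{\op{G}_h-\op{N}_h}\le C(\op{K})\,h^p\,\nnorm{\lambda^{-1/2}}_{W^2_{\infty}(\Omega)}$, and both finish by trapping $W(\op{S}_h,\op{G}_h^{-1})$ in $\op{B}_{1/2}(1)$ and invoking~\eqref{eq:distineq}. The difference is in how the norm bound is converted into control of the joined numerical range. The paper conjugates: it bounds $\nnorm{I-\op{G}_h^{-1/2}\op{N}_h\op{G}_h^{-1/2}}\le\nnorm{\op{G}_h^{-1/2}}^2\,\nnorm{\op{G}_h-\op{N}_h}$, passes to the inverse via $\nnorm{I-T^{-1}}\le(1-\alpha)^{-1}\nnorm{I-T}$, and identifies $W(\op{S}_h,\op{G}_h^{-1})$ with the numerical range of $\op{G}_h^{1/2}\op{S}_h\op{G}_h^{1/2}$; this uses only the lower bound $\op{G}_h\ge I$, so the final constant involves only $C(\op{K})$ and mesh constants. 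You instead estimate $\enorm{\op{S}_h-\op{G}_h^{-1}}_h$ through the factorization $\op{N}_h^{-1}(\op{G}_h-\op{N}_h)\op{G}_h^{-1}$ and then divide by the denominator of the Rayleigh-type quotient, which forces you to bound that denominator from below, i.e., to bound $\op{G}_h=I+\op{H}_h$ from \emph{above}. You rightly observe that the constant $C(\op{H})$ of Condition~\ref{mgipm:cond:cond1} is unusable here (it is proportional to $\nnorm{\lambda^{-1/2}}_{W^2_{\infty}}$, which the hypothesis only controls by $\delta h^{-2}$) and substitute the crude bound $\enorm{\op{H}_h}_h\le\beta^{-1}\enorm{\op{K}_h}_h^2$. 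The price is that your constant carries the extra factor $1+\widetilde{C}=O(1+\beta^{-1}\nnorm{\op{K}}^2)$, which the paper's does not. Since $\beta$ is a fixed problem datum and the theorem asserts independence only of $h$ and $\lambda$, this still proves the statement as written, but the conjugation trick is quantitatively sharper precisely in the regime $\beta\ll 1$ in which the method is actually run ($\beta=10^{-3}$, $10^{-6}$ in the experiments). The remaining bookkeeping in your Steps 2--3 (uniform bound on $\enorm{\op{S}_h}_h$, symmetry of $d_{\sigma}$, the observation that the containment in $\op{B}_{1/2}(1)$ yields $\op{S}_h\in\mathfrak{L}_+$) is sound.
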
\\
%%% BEGIN PROOF
\indent{\em Proof}.
Again, we restrict our attention to the locally-symmetric case.
The operator $\op{G}_h$ is symmetric positive definite with respect $\innprd{\cdot}{\cdot}_h$ and satisfies 
$$\innprd{\op{G}_h u}{u}_h = \enorm{u}_h^2 + \enorm{\op{L}_h u}_h^2 \ge  \enorm{u}_h^2\ .$$
Therefore $\sigma(\op{G}_h)\subseteq [1,\infty)$, and $\enorm{\op{G}_h^{\nu}}_h \le 1$ for all $\nu<0$. Due to the norm equivalence 
$\enorm{\cdot}_h \sim \nnorm{\cdot}$ there exists $C_1>0$ so that $\nnorm{\op{G}_h^{-\frac{1}{2}}} \le C_1$.
By Lemma~\ref{lma:GNapprox} and Propositions~\ref{prop:conditionimplication} and~\ref{prop:conditionimplicationKL}
we have
\begin{eqnarray*}
\nnorm{\op{G}_h- \op{N}_h}\le C_2 h^2 \nnorm{\lambda^{-\frac{1}{2}}}_{W^2_{\infty}(\Omega)}
\end{eqnarray*}
for some constant $C_2>0$. Hence for $C_3=C_1^2 C_2$
\begin{eqnarray*}
\nnorm{I-\op{G}_h^{-\frac{1}{2}} \op{N}_h \op{G}_h^{-\frac{1}{2}}}\le \nnorm{\op{G}_h^{-\frac{1}{2}}}^2\cdot \nnorm{\op{G}_h- \op{N}_h} 
\le  C_3 h^2 \nnorm{\lambda^{-\frac{1}{2}}}_{W^2_{\infty}(\Omega)}\ .
\end{eqnarray*}
Since for any operator $T$ we have $\nnorm{I-T^{-1}}\le (1-\alpha)^{-1} \nnorm{I-T}$ if $\nnorm{I-T} \le \alpha$, 
\begin{eqnarray}
\label{eq:ineq13}
\nnorm{I-\op{G}_h^{\frac{1}{2}} \op{S}_h \op{G}_h^{\frac{1}{2}}}\stackrel{\op{S}_h=\op{N}_h^{-1}}{\le}
\frac{4}{3}\nnorm{I-\op{G}_h^{-\frac{1}{2}} \op{N}_h \op{G}_h^{-\frac{1}{2}}}{\le} 
\frac{4 C_3}{3} h^2 \nnorm{\lambda^{-\frac{1}{2}}}_{W^2_{\infty}(\Omega)}\ ,
\end{eqnarray}
provided $h^2 \nnorm{\lambda^{-\frac{1}{2}}}_{W^2_{\infty}(\Omega)}\le \delta=\frac{1}{4 C_3}$.
By further restricting $\delta$, we can assume the right-hand side of~\eqref{eq:ineq13} to be $\le 1/2$,
which implies that $W(\op{S}_h, \op{G}_h^{-1}) \subset \op{B}_{\frac{1}{2}}(1)$. By~\eqref{eq:distineq}
we obtain
\begin{eqnarray*}
d_{\sigma}(\op{S}_h, \op{G}_h^{-1})\le \frac{8 \ln 2}{3} C_3 h^2 \nnorm{\lambda^{-\frac{1}{2}}}_{W^2_{\infty}(\Omega)}\ .\qquad\endproof
\end{eqnarray*}
%%% END PROOF

The highlight of the last result is the presence of $O(h^2)$ (or $O(h)$ for general quasi-uniform meshes)
in the right-hand side of~\eqref{mgipm:eq:NGest}, which is the optimal order of approximation in $h$. 
We should stress that for classical multigrid methods for differential equations one has $O(1)$ as the right-hand side estimate, which is sufficient for
mesh-independence. In this case, if the theoretically introduced smooth function $\lambda$ could be the same for all meshes, 
the number of  $\op{S}_h$-preconditioned iterations is expected to {\bf decrease} with $h\downarrow 0$.
In reality, the discrete function $\lambda_h$  is tied to the Lagrange multipliers $\vect{v}_1, \vect{v}_2$, which in turn are 
related (actually expected to converge to as $\mu, h\downarrow 0$) the Lagrange multipliers  
$\underline{\lambda}, \overline{\lambda}$ of the continuous problem. Since in general the latter are only in $L^2$, 
 the factor $\nnorm{\lambda^{-\frac{1}{2}}}_{W^2_{\infty}(\Omega)}$ is expected to be unbounded as $\mu\downarrow 0$.
Therefore the preconditioning qualities of $\op{S}_h$ are expected to increase with $h\downarrow 0$, but decrease with $\mu\downarrow 0$. 
Thus for large-scale, high-resolution problems, where $h\ll 1$, the presented method is expected to perform very well, 
especially in connection with the multigrid method discussed in the next section.
However, for fixed $h$, as $\mu\downarrow 0$ in the IPM formulation and the approximate solution approaches $\widehat{\vect{u}}$, if the 
inequality constraints are active
then the quality of the proposed preconditioner normally degrades.
The advantages or disadvantages of this method will ultimately be discussed based on numerical experiments in Section~\ref{mgimp:sec:num_examples}.

%-----------SECTION-----------%
\section{The multigrid preconditioner} 
\label{mgipm:sec:multigrid}
While the  two-grid preconditioner $\op{S}_h$ may be efficient in terms of number of iterations, 
it is expensive to apply. In this section we develop a { multigrid} preconditioner $\op{S}^{mg}_h$ that also satisfies the optimal
order estimate~\eqref{mgipm:eq:NGest} but has a lesser cost. Since the process of passing from a two-grid to a multigrid
preconditioner of comparable quality has been  analyzed in~\cite{MR2429872}, we give
here only a brief description. In this section we assume a finite number of grids
$$I_{\max}=\{h_i\}_{0\le i\le i_{\max}},\ h_i=h_0 2^{-i}, $$
and the goal is to ultimately construct an efficient multigrid preconditioner for the operator on the finest grid.

Consider the operator
$\mathfrak{I}_{i-1}^i:\mathfrak{L}(\op{V}_{h_{i-1}})\rightarrow \mathfrak{L}(\op{V}_{h_i})$
by
$$
\mathfrak{I}_{i-1}^i (\op{M})\stackrel{\mathrm{def}}{=} (I-\pi_{h_{i-1}})+\op{M}\pi_{h_{i-1}}\ .$$
Cf.~\eqref{mgipm:eq:twogridprecinv} we have $\op{S}_{h_i} = \mathfrak{I}_{i-1}^i  (\op{G}^{-1}_{h_{i-1}})$. 
If we define $\op{S}^{V}_h$, $h\in I_{\max}$,  recursively by
\begin{equation}
\label{eq:Vcycledef}
\op{S}^{V}_{h_i} = \left\{\begin{array}{lll}\vspace{7pt}
 \op{G}^{-1}_{h_0}&\mathrm{if}& i=0\ ,\\
 \mathfrak{I}_{i-1}^i  (\op{S}^{V}_{h_{i-1}})&\mathrm{if}& 1\le i\le i_{\max}\ , \end{array}\right .
\end{equation}
then $\op{S}^{V}_h$ has a V-cycle structure. However, it is shown in~\cite{MR2429872} that $\op{S}^{V}_h$ is suboptimal,
in that it satisfies~\eqref{mgipm:eq:NGest} with $h^2$ replaced by $h^2_0$. Thus the quality of $\op{S}^{V}_h$ does not
improve with $h\downarrow 0$, as desired, it is simply mesh-independent (it only depends on $h_0$). To achieve the
desired result we  define the operator $\mathfrak{N}_i:\mathfrak{L}(\op{V}_{h_i})\rightarrow \mathfrak{L}(\op{V}_{h_i})$
$$
\mathfrak{N}_i (\op{M})\stackrel{\mathrm{def}}{=} 2\op{M} - \op{M}\: \op{G}_{h_i}\:\op{M}\ .$$
The latter is related to Newton's method for the operator-equation \mbox{$\op{X}^{-1}-\op{G}_{h_i}=0$};
namely, if $\op{X}_0$ is a good guess at the solution, i.e., approximates well $\op{G}^{-1}_{h_i}$, then the
first Newton iterate starting at $\op{X}_0$ is $\op{X}_1=\mathfrak{N}_i(\op{X}_0)$ (see also Remark 3.11 in~\cite{MR2429872}).
We define the multigrid preconditioner using the following algorithm:

\vspace{6pt}
\noindent{\bf Algorithm 1:}\emph{ Operator-form definition of $\op{S}^{mg}_{h_i}$}
\vspace{6pt}
\begin{enumerate}
\item {\tt if}\hspace{3pt} $i=0$ \hspace{3pt} 
\vspace{6pt}
\item \hspace{20pt} $\op{S}^{mg}_{h_0} := \op{G}^{-1}_{h_0}$ \hspace{96pt} {\tt \%} coarsest level
\vspace{6pt}
\item {\tt else if} \hspace{3pt} $i<i_{\max}$ \hspace{10pt} 
\vspace{6pt}
\item \hspace{42pt} $\op{S}^{mg}_{h_i} :=\mathfrak{N}_i(\mathfrak{I}_{i-1}^i (\op{S}^{mg}_{h_{i-1}}))$ \hspace{22pt} {\tt \%} intermediate level
\vspace{6pt}
\item  \hspace{20pt} {\tt else}\hspace{3pt} 
\vspace{6pt}
\item \hspace{42pt} $\op{S}^{mg}_{h_i} :=\mathfrak{I}_{i-1}^i (\op{S}^{mg}_{h_{i-1}})$ \hspace{42pt} {\tt \%} finest level
\vspace{6pt}
\item  \hspace{20pt} {\tt end if} 
\vspace{6pt}
\item  {\tt end if} 
\vspace{6pt}
\end{enumerate}
The key factor in Algorithm 1 is the application of $\mathfrak{N}_i$ at Step 4, and here is why: while $\op{G}^{-1}_{h_i}$
is well approximated by  $\mathfrak{I}_{i-1}^i(\op{S}^{mg}_{h_{i-1}})$ provided that  $\op{G}^{-1}_{h_{i-1}}\approx \op{S}^{mg}_{h_{i-1}}$ (recall
that $\op{G}^{-1}_{h_i}{\approx} \op{S}_{h_i} = \mathfrak{I}(\op{G}^{-1}_{h_{i-1}})$), an application
of $\mathfrak{N}_i$ brings $\mathfrak{I}(\op{S}^{mg}_{h_{i-1}})$ even closer to $\op{G}^{-1}_{h_i}$. This step is critical if
we want $d_{\sigma}(\op{G}^{-1}_{h}, \op{S}^{mg}_{h})=O(d_{\sigma}(\op{G}^{-1}_{h}, \op{S}_{h}))$.
Also, there are two main reasons for splitting the cases of intermediate vs. finest resolution, as opposed to just replacing 
$\mathfrak{I}_{i-1}^i (\op{S}^{V}_{h_{i-1}})$ with $\mathfrak{N}_i(\mathfrak{I}_{i-1}^i (\op{S}^{V}_{h_{i-1}}))$ in~\eqref{eq:Vcycledef}. 
First we would like to have
$\op{S}^{mg}_h=\op{S}_h$ for $h=h_{i_{\max}}$ if only two grids are used. Second, the application of $\mathfrak{N}_i$ includes a multiplication 
by $\op{G}_{h_i}$; since for the intended large-scale applications the finest-level mat-vec is expected to be 
very costly, we prefer that no such mat-vecs are computed  {\bf inside} the preconditioner. 
\begin{theorem} 
\label{mgipm:th:multigrid}
In the hypotheses of Theorem~\textnormal{\ref{mgipm:th:twogrid}}, and with $\op{S}^{mg}_h$ 
defined as in Algorithm {\textnormal 1} there exist $C, \delta>0$ 
independent of $h$ and $\lambda$ so that for $h^2 \nnorm{\lambda^{-\frac{1}{2}}}_{W^2_{\infty}(\Omega)} \le \delta$
\begin{equation}
\label{mgipm:eq:GSmg}
d_{\sigma}\left(\op{G}_h^{-1}, \op{S}^{mg}_h\right) \le C h^2 \nnorm{\lambda^{-\frac{1}{2}}}_{W^2_{\infty}(\Omega)}\ ,\  \mathrm{for}\ h=h_{\max}\ .
\end{equation}
\end{theorem}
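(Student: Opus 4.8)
The plan is to follow the level-by-level induction of~\cite{MR2429872}, carrying the spectral distance $d_{\sigma}$ — a genuine metric on $\mathfrak{L}_+$, so the triangle inequality is available — as the quantity to propagate. Write $A=\nnorm{\lambda^{-\frac{1}{2}}}_{W^2_{\infty}(\Omega)}$ and, for $0\le i\le i_{\max}$, set $\epsilon_i=d_{\sigma}(\op{G}_{h_i}^{-1},\op{S}^{mg}_{h_i})$; one would first check, inductively and in tandem with the bounds below, that $\op{S}^{mg}_{h_i}\in\mathfrak{L}_+(\op{V}_{h_i})$ so that $\epsilon_i$ is defined, which follows from the smallness bounds together with the $\innprd{\cdot}{\cdot}_{h_i}$-symmetric positive definiteness of $\op{G}_{h_i}^{-1}$ and the uniform boundedness of $\op{G}_{h_i}$ (the latter via Condition~\ref{mgipm:cond:cond1} and Proposition~\ref{prop:conditionimplication}). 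I would assemble three ingredients: (i) the two-grid estimate of Theorem~\ref{mgipm:th:twogrid} on the pair $(h_i,h_{i-1})$, which — recalling $\op{S}_{h_i}=\mathfrak{I}_{i-1}^i(\op{G}_{h_{i-1}}^{-1})$ from~\eqref{mgipm:eq:twogridprecinv} — reads $d_{\sigma}(\op{G}_{h_i}^{-1},\mathfrak{I}_{i-1}^i(\op{G}_{h_{i-1}}^{-1}))\le C_0h_i^2A$; (ii) a level-uniform stability of the lifting, $d_{\sigma}(\mathfrak{I}_{i-1}^i(\op{M}),\mathfrak{I}_{i-1}^i(\op{N}))\le C_{\mathfrak I}\,d_{\sigma}(\op{M},\op{N})$; and (iii) the quadratic contraction of the Newton-type map, $d_{\sigma}(\op{G}_{h_i}^{-1},\mathfrak{N}_i(\op{M}))\le C_1\,\bigl(d_{\sigma}(\op{G}_{h_i}^{-1},\op{M})\bigr)^2$ when $d_{\sigma}(\op{G}_{h_i}^{-1},\op{M})$ is below a fixed threshold, which follows softly from the identity $I-\mathfrak{N}_i(\op{M})\op{G}_{h_i}=(I-\op{M}\op{G}_{h_i})^2$ together with~\eqref{eq:distineq} and Lemma~\ref{lma:dist_ctrl_radius}.

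With these in hand, the induction would run as follows. At $i=0$, $\op{S}^{mg}_{h_0}=\op{G}_{h_0}^{-1}$, so $\epsilon_0=0$. For an intermediate level $1\le i<i_{\max}$, Step~4 of Algorithm~1 sets $\op{S}^{mg}_{h_i}=\mathfrak{N}_i(\op{M})$ with $\op{M}=\mathfrak{I}_{i-1}^i(\op{S}^{mg}_{h_{i-1}})$; the triangle inequality with (i) and (ii) gives $d_{\sigma}(\op{G}_{h_i}^{-1},\op{M})\le C_0h_i^2A+C_{\mathfrak I}\epsilon_{i-1}$, and then (iii) yields the recursion
\[
\epsilon_i\le C_1\bigl(C_0h_i^2A+C_{\mathfrak I}\,\epsilon_{i-1}\bigr)^2\ .
\]
The statement to prove by induction is $\epsilon_i\le Kh_i^2A$ for a fixed $K$ (say $K=2C_0$): assuming it at level $i-1$ and using $h_{i-1}=2h_i$, one gets $C_0h_i^2A+C_{\mathfrak I}\epsilon_{i-1}\le(C_0+4C_{\mathfrak I}K)h_i^2A$, hence $\epsilon_i\le C_1(C_0+4C_{\mathfrak I}K)^2(h_i^2A)\cdot(h_i^2A)\le Kh_i^2A$ once $h_i^2A$ is small enough, and — since $h_i\le h_0$ — this holds at every level provided $h_0^2A$ is below a suitable $\delta$ (equivalently, the coarsest mesh is fine enough relative to $\lambda$; this is subsumed in the standing hypothesis, as Theorem~\ref{mgipm:th:twogrid} must hold on every consecutive pair of grids). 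The same smallness keeps $d_{\sigma}(\op{G}_{h_i}^{-1},\op{M})$ under the threshold needed in (iii). The point to stress is that the geometric decay $h_i=h_02^{-i}$ combined with the squaring in $\mathfrak{N}_i$ prevents the coarse-grid error $\epsilon_{i-1}$ from spoiling the optimal order; dropping Step~4 — i.e. using the V-cycle $\op{S}^V_h$ of~\eqref{eq:Vcycledef} — would only give $\epsilon_i\le C_0A\sum_{j\le i}h_j^2\le\frac{4}{3}C_0h_0^2A$, that is, mesh-independence instead of the improving estimate.

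At the finest level $i=i_{\max}$, Step~6 applies no Newton correction, $\op{S}^{mg}_{h_{i_{\max}}}=\mathfrak{I}_{i_{\max}-1}^{i_{\max}}(\op{S}^{mg}_{h_{i_{\max}-1}})$, so one more pass of the triangle inequality with (i) and (ii) gives
\[
\epsilon_{i_{\max}}\le C_0h_{i_{\max}}^2A+C_{\mathfrak I}\,\epsilon_{i_{\max}-1}\le(C_0+4C_{\mathfrak I}K)\,h_{i_{\max}}^2A\ ,
\]
which is exactly~\eqref{mgipm:eq:GSmg} with $C=C_0+4C_{\mathfrak I}K$ and $h=h_{\max}=h_{i_{\max}}$; note it is the Newton steps at the intermediate levels (and not at the finest) that make $\epsilon_{i_{\max}-1}$ already of order $h_{i_{\max}}^2A$. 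The non-locally-symmetric case goes through verbatim with $h^2$ replaced by $h$.

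The hard part will be establishing ingredient (ii) — the level-uniform lifting stability of $\mathfrak{I}_{i-1}^i$ in $d_{\sigma}$ — since $\pi_{h_{i-1}}$ is not self-adjoint for $\innprd{\cdot}{\cdot}_{h_i}$, the same ``almost symmetric'' difficulty noted after~\eqref{mgipm:eq:twogridprecinv}. My expectation is that the argument of~\cite{MR2429872} transfers essentially verbatim: it uses only that $\op{G}_{h_i}$ is $\innprd{\cdot}{\cdot}_{h_i}$-symmetric positive definite, bounded below by the identity and uniformly bounded above, plus the norm equivalence~\eqref{eq:equivmeshipl2} to pass between $\enorm{\cdot}_{h_i}$ and $\nnorm{\cdot}$, at the cost that $C_{\mathfrak I}$ is a level-independent constant that may exceed $1$. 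Crucially, any $C_{\mathfrak I}\ge1$ is harmless here, because the quadratic term in the recursion absorbs it — which is precisely the reason the Newton map $\mathfrak{N}_i$ was built into Algorithm~1.
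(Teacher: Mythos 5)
Your proposal is correct and follows essentially the route the paper itself indicates: the paper omits the details and defers to Theorem~5.4 of~\cite{MR2429872}, whose argument is precisely the level-by-level induction in the spectral distance that you reconstruct, with the identity $I-\mathfrak{N}_i(\op{M})\op{G}_{h_i}=(I-\op{M}\op{G}_{h_i})^2$ supplying the quadratic contraction and the near-symmetry of the lifted operators (traceable to the $\innprd{\cdot}{\cdot}_h$-symmetry of $\op{G}_h$ and the norm equivalence~\eqref{eq:equivmeshipl2}) supplying the level-uniform stability of $\mathfrak{I}_{i-1}^i$. Your observations that the smallness hypothesis must be read on the coarsest grid $h_0$ and that a lifting constant $C_{\mathfrak I}\ge 1$ is absorbed by the squaring are exactly the points the cited proof turns on.
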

\noindent The proof of Theorem~\ref{mgipm:th:multigrid} follows closely that of Theorem 5.4 in~\cite{MR2429872} and, in the interest 
of brevity, we do not give further details. Suffice it to say that the use of the spectral distance is instrumental, and that
an essential ingredient is the symmetry (with respect to $\innprd{\cdot}{\cdot}_h$) of $\op{G}_h$.

In practice, for large-scale problems, neither $\op{G}_h$ nor $\op{S}^{mg}_h$ are ever formed, so both are to be applied matrix-free. 
A simple verification shows that, given  some $\op{M}\in\mathfrak{L}(\op{V}_{h_i})$, the vector $\tilde{u}=(\mathfrak{N}_i(\op{M}))r$ can be computed by
setting $\tilde{u}:=u_2$ where $u_{k+1}:=u_k+\op{M}(r-\op{G}_{h_i}u_k)$ with $u_0=0$. Thus the matrix-free application of $\op{S}^{mg}_h$ is computed 
by the following function:

\vspace{6pt}
\noindent{\bf Algorithm 2:} Matrix-free implementation of the action $u=\op{S}^{mg}_h r$.
\vspace{6pt}
\begin{enumerate}
\item {\tt function}\ $u = MG(r, i)$
\vspace{6pt}
\item {\tt if}\hspace{3pt} $i=0$ \hspace{3pt} \hspace{100pt} {\tt \%} coarsest level
\vspace{6pt}
\item \hspace{20pt} $u := \op{G}^{-1}_{h_0} r$ \hspace{78pt} {\tt \%} direct or unpreconditioned CG solve
\vspace{6pt}
\item {\tt else} 
\vspace{6pt}
\item \hspace{20pt} $u := (I-\pi_{h_{i-1}}) r + MG(\pi_{h_{i-1}} r, i-1)$
\vspace{6pt}
\item \hspace{20pt}{\tt if} $i<i_{\max}$ \hspace{78pt} {\tt \%} intermediate level
\vspace{6pt}
\item \hspace{40pt} $r_1:=r-\op{G}_{h_i}u$
\vspace{6pt}
\item \hspace{40pt} $u_1 := (I-\pi_{h_{i-1}}) r_1 + MG(\pi_{h_{i-1}} r_1, i-1)$
\vspace{6pt}
\item \hspace{40pt} $u := u+ u_1$
\vspace{6pt}
\item \hspace{20pt} {\tt end if}\hspace{3pt} 
\vspace{6pt}
\item {\tt end if}\hspace{3pt} 
\vspace{6pt}
\end{enumerate}
As can be readily seen, Algorithm 2 has a W-cycle structure. To estimate the cost of $MG(\cdot, i_{\max})$
we denote by $T(i)$ the cost of applying $MG(\cdot, i)$ for $0<i<i_{\max}$. If we assume that 
one residual computation at level $i$ has complexity $O(N_{h_i}^2)$, and that the cost of computing an $L^2$-projection is negligible 
compared to that of a residual computation (this is reasonable for most applications since mass matrices are normally easy to invert)
then the resulting recursion for the function $T$ reads:
$$
T(i)=O(N_{h_i}^2) + 2 T(i-1)\ .
$$
For $i=i_{\max}$ the term $O(N_{h_i}^2)$ is replaced by a potentially smaller cost of just computing an $L^2$-projection.
Given that $N_{h_{i-1}} \approx 2^{-d} N_{h_{i}}$ (here $d=2$), a standard argument shows that 
$$T(i_{\max}) = O(N_{i_{\max}}^2)\ ,$$
that is, a cost that is proportional to that of a residual computation.

Another comment refers to a detail that is not very transparent in Algorithm 2, namely that coarse versions of 
$\lambda_{h_i}$ are necessary for each level, because 
$$\op{G}_{h_i}=I+(\op{D}^{h_i}_{1/\sqrt{\lambda_{h_i}}})^* (\op{K_{h_i}})^* \op{K_{h_i}}\op{D}^{h_i}_{1/\sqrt{\lambda_{h_i}}}\ .$$ 
Since the original problem is solved starting at the finest level, where $\lambda_{h_{i_{\max}}}$ is given by the
optimization algorithm, the functions $\lambda_{h_i}$ are obtained by simply discarding the values at finer nodes of $\lambda_{h_{i_{\max}}}$.
The parameter $\beta$ is hidden in $\lambda$ and affects the process indirectly.

%-----------SECTION-----------%
\section{Applications and numerical examples}
\label{mgimp:sec:num_examples}
%--------------------------%
In this section we discuss two applications. The first is related to the inverse contamination
problem studied in~\cite{Bart_Omar:SC05a, ABDGHV06}, where $\op{K}$ is a time-$T$
solution operator of a parabolic equation. The second  is a standard
elliptic-constrained optimal control problem with additional
box-constraints on the control.

\subsection{Solution strategies and metrics for success}
\label{ssec:metrics}
For both applications we apply Mehrotra's algorithm and we solve the inner linear systems
using the multigrid preconditioner previously defined.
In the absence of multiple grids, the linear systems~\eqref{lin_sys_normal_genWDH} are
solved using conjugate gradient (CG), while for more than one level we used
MG-preconditioned conjugate gradient squared (CGS), because of the slight
non-symmetry of the MG preconditioner.  As a first metric we record  the number of inner
linear iterations needed at each outer iteration; secondly, we record the
total number of finest-grid mat-vecs
for the entire solution process. Recall that each outer iteration
requires the solution of two linear systems with identical matrices,
namely one for the predictor step and one for the corrector step; in the interest of the presentation
we record only the linear iterations for the predictor step.  Also, a small number of 
mat-vecs are required in the process in addition to those needed for the predictor-corrector solves, and are reflected in
the count.
With respect to the second metric we remark that the proposed  algorithm is
intended for large-scale problems, with the most expensive computation
being the finest-scale residual computation. The
ultimate goal is to significantly reduce the total number  of
finest-scale mat-vecs, because this is expected to be directly linked with
execution time in a truly large-scale computation.  With regard to the second metric
we would like the total number of finest-level mat-vecs to decrease with $h\downarrow 0$.
As for the first metric (number of iterations) we would like to witness the following:\\
{\bf [a.]} The number of MG-preconditioned CGS-iterations should be
  less than half of the unpreconditioned CG-iterations  (each
  CGS-iteration involves two mat-vecs, while a CG iteration requires
  only one).\\
{\bf [b.]} For a given resolution $h$, the number of MG-preconditioned CGS
  iterations should be relatively bounded with respect to the number of
  levels used, provided the coarsest level is sufficiently fine, as
  stated in  Theorem~\ref{mgipm:th:multigrid}.\\
{\bf [c.]} Mostly important, {\bf the number of MG-preconditioned CGS iterations
  should decrease with $h\downarrow 0$}; in other words, the
  MG-preconditioned CGS becomes increasingly advantageous compared to CG as
  the problem-size increases. One word of caution though: linear systems of
  different resolutions are not necessarily related in a direct fashion,
  since their ``$\lambda$'' is dictated by the evolution of  Mehrotra's  algorithm,
  which is slightly different for  each resolution. For example, the tenth linear system
  to be solved in the IPM process at a resolution $2 h$ is not necessarily some
  coarse version of the tenth system to be solved at resolution $h$.\\
\indent Also, we should point out that in all our tests we use a cold start, that is, we do not
take advantage of results from coarser levels except for in the MG-solve of the inner linear
systems. While for realistic applications ``warm-start'' strategies are essential, we restrict our
attention to the way our multigrid technology plays a role in solving the inner linear systems.

%-----------SUBSECTION-----------%
\subsection{Time-reversal for a parabolic equation}
\label{ssec:invadr}
We consider the problem of finding the initial state (the control) for a system governed by a parabolic
equation given the state at a later time $T$ under additional box-constraints on the control.
\begin{comment}
An application of this problem is the question of identifying the initial concentration
of an air-contaminant subjected to an advection-diffusion equation that models
the evolution of the contaminant in an atmosphere with a known flow pattern, given
the entire state of the contaminant at a given time $T$. This is a simplified version of
the problem studied in~\cite{Bart_Omar:SC05a, ABDGHV06} where measurements of the contaminant were taken
at sparse locations in space but at all times, which is closer to a realistic scenario.
Since the problem is ill-posed, it is formulated as a regularized optimization
problem controlled by the initial state $v$. If the correct initial state is
localized, it is well known that lack of bounds on the control leads to a solution
that is not localized and can have values outside of the physically meaningful
$[0,1]$-range, thus we constrain the  control to satisfy $0\le v(x)\le 1$.
\end{comment}
Multigrid-preconditioning for the unconstrained version of this problem was
studied in detail in~\cite{mythesis, MR2429872} and for the space-time measurements in~\cite{Bart_Omar:SC05a}.

Formally, we consider the following parabolic initial value problem with periodic boundary
conditions
\begin{equation}
\label{eq:parabolic_gen}
\left\{
\begin{array}{lll}
\partial_t y  -\partial_x(a \partial_x y + b y)+ c y = 0 &,&\ \ \mbox{on}\; [0,1]\times (0, T]\ , \\
y(0,t)= y(1,t),\ \partial_x y(0,t)=\partial_x y(1,t)&,&\ \ \mbox{for}\; t\in (0,T]\ , \\
y(x, 0) =u(x)&,&\ \ \mbox{for}\; x\in [0,1]\ , \\
\end{array}
\right .
\end{equation}
where $a>0, b, c\ge 0$ are constants, and $T>0$ is the end-time. For $t>0$ we denote by
$\op{S}(t)\in \mathfrak{L}(L^2([0,1]))$ the time-$t$ solution operator mapping the initial value
onto $y(\cdot, t)$
$$u\stackrel{\op{S}(t)}{\longmapsto} y(\cdot, t)\ ,$$
and let $\op{K}=\op{S}(T)$. The discrete $\op{K}_h$ is obtained by using
a Galerkin formulation with continuous piecewise elements on a uniform grid for the spatial
discretization and Crank-Nicolson in time.
It is shown in~\cite{MR83m:65072} (see also~\cite{MR2249024}) that for sufficiently
small $h$ the following estimate holds:
\begin{equation}
\label{eq:parsmoothest}
\nnorm{\op{K} u - \op{K}_h\pi_h u}_{H^m}\le C h^{2-m} \nnorm{u}\ ,\ \forall u\in L^2([0,1])\ ,\ m=0, 1,
\end{equation}
where $C=C(T)$, provided the time step $k$ is proportional to the spatial resolution
\mbox{$k=C_1 h$}, with $C_1$ chosen to ensure stability. Consequently, space and time
resolutions are refined at the same rate, and Condition~\ref{mgipm:cond:condsmooth} is verified, so our theory applies.
The specific details (boundary conditions, constant advection etc.) in this example were chosen for convenience, however,
two and three spatial dimensions, other types of boundary conditions, as well as
smoothly varying functions in place of the constants $a, b, c$ are supported.

\paragraph{A direct verification of the convergence order} As mentioned earlier,
when running Mehrotra's algorithm with different resolutions, the added diagonal terms
$\lambda$ may not be in direct relationship with each other. Hence, in order to practically
verify the presence of $h^2$ in the estimate~\eqref{mgipm:eq:NGest} we resort to an artificial context:
we construct $\op{G}_h$ based on a fixed function $\lambda_h=\op{I}_h(\sin)+\beta$
for $h_j=80\cdot 2^j, j=0, 1, 2, 3$, and we define the corresponding
two-grid preconditioners $\op{N}_h$. Then we compute the ``distances''
$d_h=\max\{\abs{\ln \alpha} : \alpha\in \sigma(\op{G}_h, \op{N}_h)\}$. Since $d_h$
approximates the spectral distance of interest (because $\op{N}_h$ is close to being symmetric, actually
we have $d_h\le d_{\sigma}$) we expect to see that $d_h=O(h^2)$. We repeat the experiment for
$\beta=1, 0.1, 0.01$. The results presented in Table~\ref{tab:results_W3}, while not yet converged,
give a strong indication of an asymptotic rate $\lim_{h\rightarrow 0}d_{2h}/d_{h} = 4$.

\begin{table}[!ht]
\begin{center}
\caption{$d_h=\max\{\abs{\ln \alpha} : \alpha\in \sigma(\op{G}_h, \op{N}_h)\}$ for $\lambda(x)\approx (\sin(x) +\beta)$.}
\label{tab:results_W3}
\begin{tabular}{|l|l|l|l|l|l|l|}
  \hline
  $h\  \backslash\  \beta$& \multicolumn{2}{|c|}{1} & \multicolumn{2}{c|}{0.1}& \multicolumn{2}{c|}{0.01} \\\hline
        & $d_{h}$ & rate & $d_{h}$ & rate & $d_{h}$ & rate \\\hline
  1/80  & 0.0206 &         & 0.1127 &         &  0.2812 &        \\
  1/160 & 0.0066 &  3.1342 & 0.0363 &  3.1078 &  0.1270 & 2.2140 \\
  1/320 & 0.0020 &  3.3140 & 0.0102 &  3.5488 &  0.0445 & 2.8535 \\
  1/640 & 0.0006 &  3.5199 & 0.0027 &  3.7365 &  0.0123 & 3.6284 \\
  \hline
\end{tabular}
\end{center}
\end{table}

\paragraph{Numerical study}
We consider the ``true'' initial value $u_0$ supported on two intervals with $u_0$ reaching
the value $1$ on one of the intervals and $1/2$ on the other interval, then let $f=\op{K} u_0$.
Specific values are $a=4\cdot 10^{-3}, b=0.4, c=0$, and $T=0.8$. In Figure~\ref{fig:numerical_sol_invadr} we
show $u_0, f$ as well as the converged solution $u_{\min}$ of the box-constrained optimization problem with $\beta=10^{-3}$,
a value chosen because of the relatively good (visual) agreement of $u_0$ with $u_{\min}$. We run Mehrotra's algorithm
for $h=2^{-10}, 2^{-11}, 2^{-12}, 2^{-13}$, and for each $h$ we test the solvers with 1, 2, and 3 levels.
The number of linear iterations required by each of the linear solves in the predictor step are
shown in Figure~\ref{fig:lin_iterations_adr}, while the corresponding values for $\nnorm{\lambda^{-1/2}}_{W^2_{\infty}}$ and $\mu$ are shown in
the top two pictures of Figure~\ref{fig:muandlambdas_adr}.

\begin{figure}[!ht]
%\begin{center}
        \includegraphics[width=6.5in]{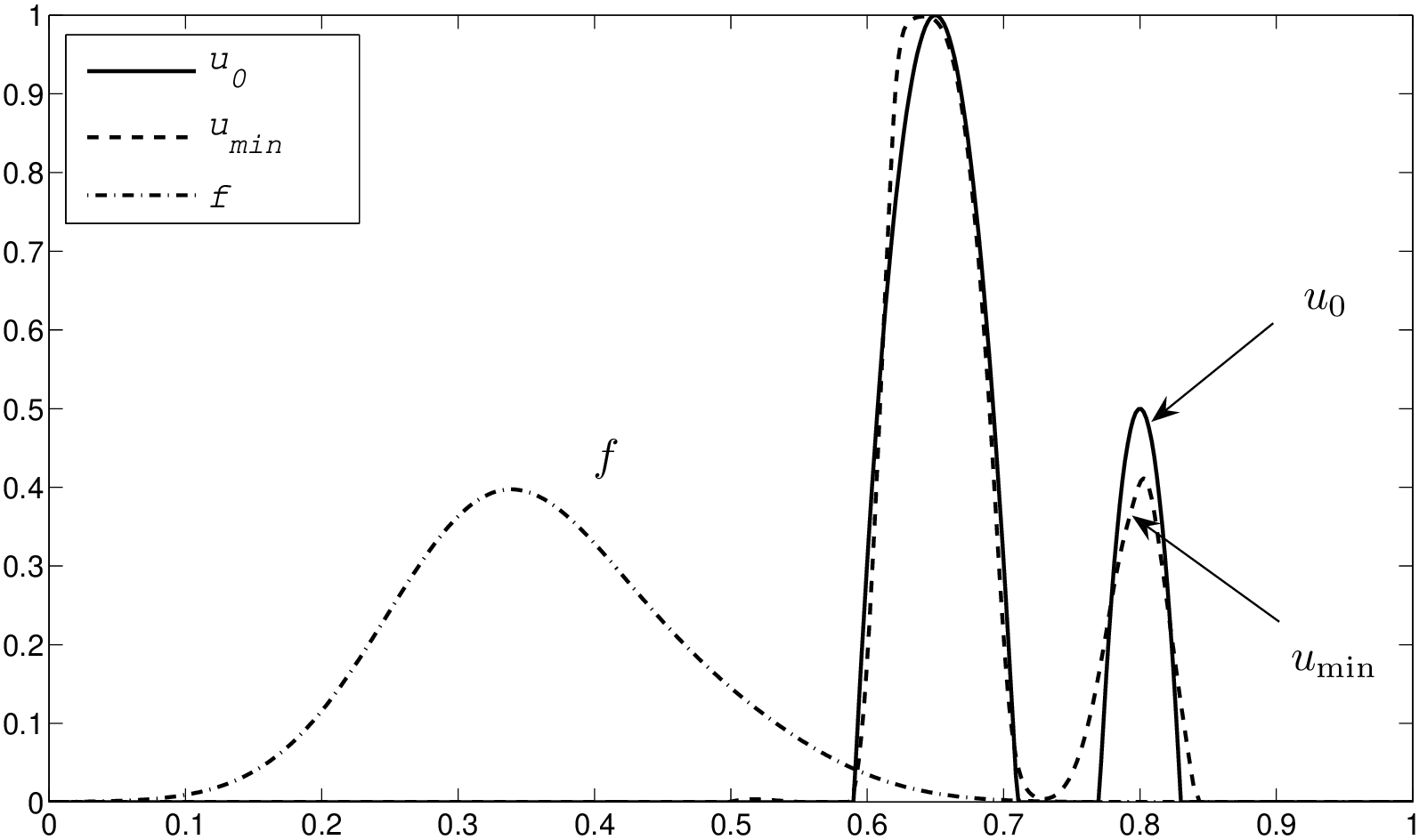}
\caption{Solution with $f=\op{K} u_0$, $\beta=10^{-3}$, $0\le u \le 1$.}
\label{fig:numerical_sol_invadr}
%\end{center}
\end{figure}

\begin{figure}[!th]
%\begin{center}
        \includegraphics[width=6.5in]{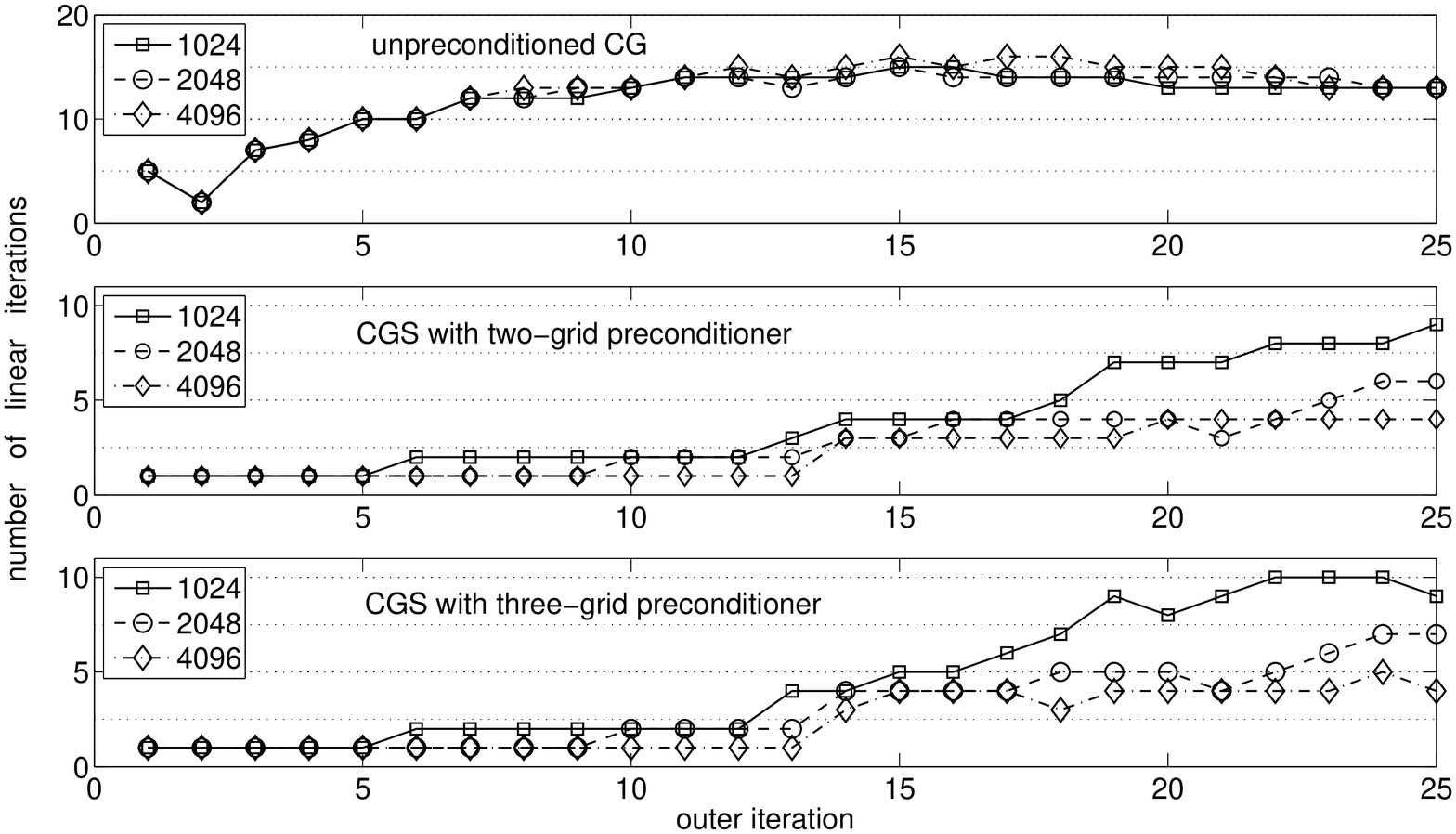}
\caption{Number of iterations for each of the predictor-step linear systems solved \textnormal{(}$\beta = 10^{-3}$\textnormal{)}.}
\label{fig:lin_iterations_adr}
%\end{center}
\end{figure}

\begin{figure}[!th]
%\begin{center}
        \includegraphics[width=6.5in]{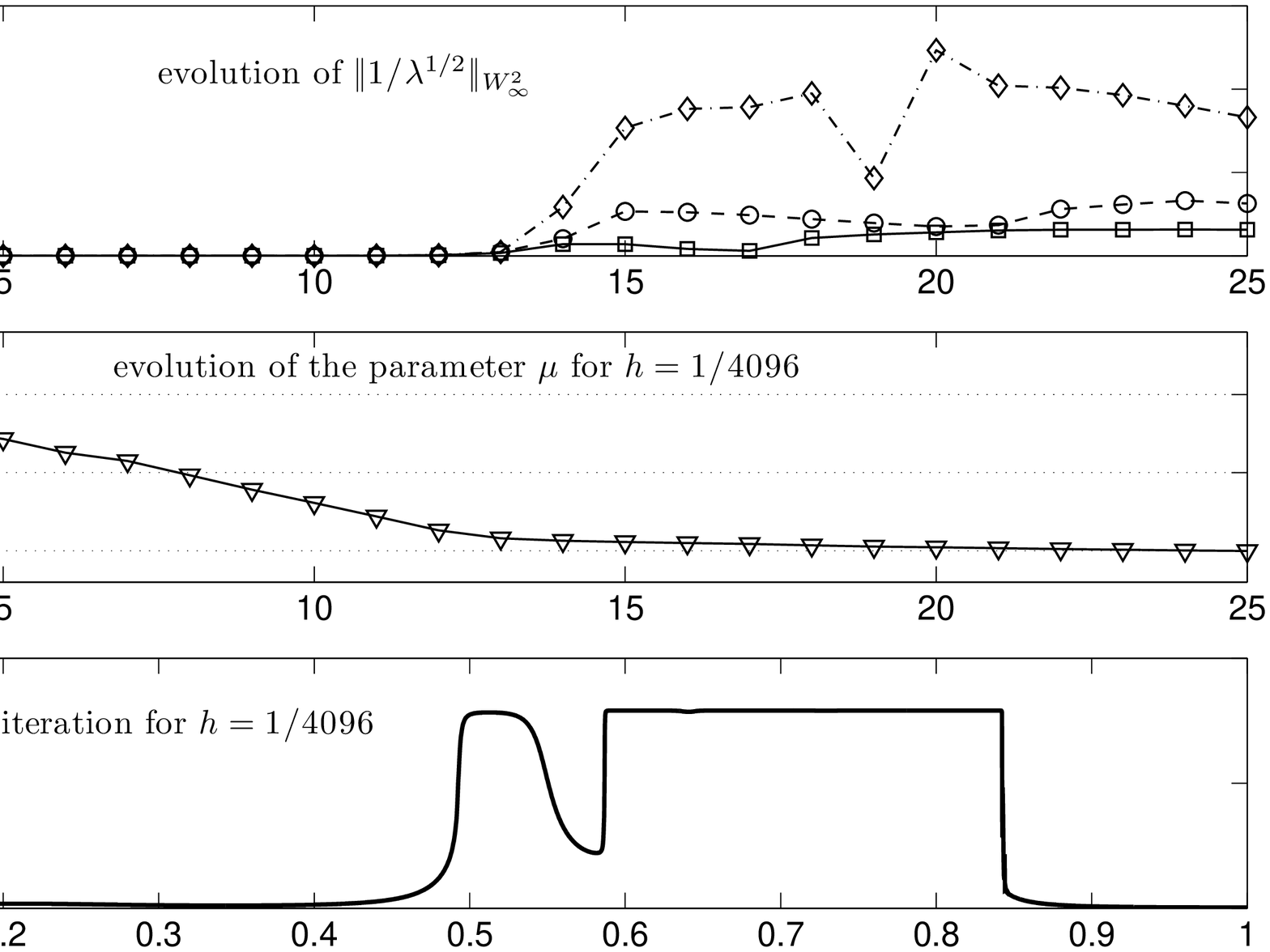}
\caption{Top: discrete $\nnorm{\lambda_h^{-\frac{1}{2}}}_{W^2_{\infty}}$ based on discrete Laplacian should give
  an idea of the size of $\nnorm{\lambda^{-\frac{1}{2}}}_{W^2_{\infty}}$; middle: $\log$-plot of $\mu$ as a function of
  outer iteration; bottom: $\lambda_h$ for $h=1/4096$ at the last outer iterate \textnormal{(}$\beta = 10^{-3}$\textnormal{)}.}
\label{fig:muandlambdas_adr}
%\end{center}
\end{figure}

First we remark that the number of unpreconditioned CG iterations appears to be mesh-independent (top chart in Figure~\ref{fig:lin_iterations_adr}):
essentially the curves representing the number of iterations for each of the resolutions more-or-less overlap.
%For example, the predictor-step linear system at the twentieth outer iteration requires between $13$ and $15$ iterations for all resolutions under  discussion.
We notice only a slight increase in number of iterations for higher resolutions.
Second, from the middle chart in  Figure~\ref{fig:lin_iterations_adr} representing the number of two-grid CGS iterations,
we infer that the number of two-grid preconditioned iterations {\bf consistently} decreases with $h\downarrow 0$, as desired. For example,
at the twenty-fourth iteration these numbers are
$8, 6$, and $4$, while at the twentieth they are $7, 4, 4$. This phenomenon is repeated for the three-grid preconditioner as can be seen from the bottom
chart in Figure~\ref{fig:lin_iterations_adr}. Moreover, for the most part, the number of MG-CGS preconditioned iterations is significantly smaller than
half the number of unpreconditioned CG iterations: e.g., for $h=1/4096$, up to the thirteenth iteration (where $\mu$ is already down to approx. $10^{-8}$, 
see Figure~\ref{fig:muandlambdas_adr})
 only one two-grid (or three-grid) MG-preconditioned CGS
iteration is necessary, while up to $15$ unpreconditioned CG iterations are needed. However, after the thirteenth iteration $\nnorm{\lambda^{-1/2}}_{W^2_{\infty}}$
shows a significant  increase, as seen on the top chart of Figure~\ref{fig:muandlambdas_adr},
and the MG-preconditioned CGS is less efficient: for $h=1/4096$, at the twenty-fourth  iteration $4$ two-grid iterations are needed
compared to $13$ CG iterations, a lesser advantage compared to the earlier outer iterations. The bottom chart in Figure~\ref{fig:muandlambdas_adr} shows
the last computed $\lambda_h$ at $h=1/4096$ to give an idea of why the quantity $\nnorm{\lambda^{-1/2}}_{W^2_{\infty}}$ is so large. A comparison between the
bottom and middle charts in Figure~\ref{fig:lin_iterations_adr} shows that the number of MG-preconditioned iterations is not very sensitive to the number
of levels, provided  the coarsest mesh is sufficiently fine. In this example four levels would force a much too coarse base mesh, and produce unsatisfactory
preconditioners. The last piece of evidence is the total count of finest-level mat-vecs, shown in Table~\ref{tab:finemat-vecs_invadr}. In this example,
a mat-vec involves solving the advection-reaction-diffusion equation on $[0,T]$.
The data clearly shows that,
as \mbox{$h\downarrow 0$}, the two-level solvers is getting increasingly efficient in  this metric compared to CG: the ratio goes from $581/728$ for $h=1/1024$
to $377/768$ for $h=1/8192=2^{-13}$. We should remark also that the essential impediment to a more significant improvement over CG lies in the increase
in the $\nnorm{\lambda^{-1/2}}_{W^2_{\infty}}$  as \mbox{$h\downarrow 0$}. As shown in Theorem~\ref{mgipm:th:twogrid}, non-smoothness of $\lambda^{-1/2}$ decreases the
preconditioner's efficiency.

\begin{table}[ht]
%\caption{Total number of finest-grid mat-vecs: left table -- reversed parabolic equation, right table -- elliptic constrained optimal control problem.}
%\label{tab:finemat-vecs_invadr}
\begin{minipage}[b]{0.5\linewidth}\centering
\caption{Total number of fine-grid mat-vecs\newline for the \textnormal{1D} reversed parabolic equation}
\label{tab:finemat-vecs_invadr}
\hspace{-1.cm}
\begin{tabular}{|l|l|l|l|}
  \hline
  $h\  \backslash\ $ levels& 1& 2& 3\\\hline
  1/1024  & 728 & 581 & 661 \\
  1/2048  & 740 & 463 & 489 \\
  1/4096  & 764 & 403 & 425 \\
  1/8192  & 768 & 377 & 403 \\
  \hline
\end{tabular}
\end{minipage}
\hspace{0.cm}
\begin{minipage}[b]{0.5\linewidth}
\centering
\caption{Total number  of fine-grid mat-vecs for \newline the \textnormal{2D} elliptic-constrained opt. ctrl. problem}
\label{tab:finemat-vecs_lap}
\begin{tabular}{|l|l|l|l|l|}
  \hline
  $h\  \backslash\ $ levels& 1& 2& 3& 4\\\hline
  1/256  & 354 & 282 & 572 & -- \\
  1/512  & 355 & 220 & 250 & 452 \\
  1/1024 & 355 & 198 & 210 & 224\\
%  1/1600 &  354 & 180 & 186  & 194\\
  1/2048 &  363 & 172 & 174  & 174\\
  \hline
\end{tabular}
\end{minipage}
\end{table}

%-----------SUBSECTION-----------%
\subsection{An elliptic-constrained control problem}
In this example we discuss the elliptic-constrained optimal control problem~\eqref{mgipm:eq:lap}
from Example B, which is a standard test problem in PDE-constrained
optimization~\cite{MR2505585, MR2516528}, and corresponds
to~\eqref{mgipm:eq:maineq} with $\op{K} = \Delta^{-1}$. We consider a square domain
with a continuous piecewise linear finite element discretization based on the standard three-lines
triangular mesh\footnote{The {\it three-line mesh} is obtained by dividing the square into
equally sized squares  with sides parallel to the coordinate axes, and by further cutting each little
square along its slope-one diagonal.}. Standard estimates for finite element solutions of elliptic problems
show that Condition~\ref{mgipm:cond:condsmooth} is verified~\cite{MR2373954}.

\paragraph{Numerical study}
Let $\Omega=[0,1]\times [0,1]$, $\beta=10^{-6}$, and
$f$ be the function that satisfies  $\Delta f = u_0, f|_{\partial
\Omega}=0$,  where $u_0(x,y)= \frac{3}{2} \sin(2 \pi x)\: \sin(2 \pi
y)$. With this selection of $f$,  the choice $u=u_0$ would be a
solution of~\eqref{mgipm:eq:lap} if $\beta=0$ and no box constraints
were present (or if $[-\frac{3}{2}, \frac{3}{2}]\subseteq [\underline{u},\overline{u}]$).   Here the bounds
$[\underline{u},\overline{u}]=[-1, 1]$ are active: without them, given that $\beta\ll 1$, the
solution would be close to $u_0$, that is,  would have a maximum
(resp. minimum) close to $3/2$ (resp. $-3/2$).  The solution with
$h=1/128$ is depicted in Figure~\ref{fig:numerical_sol}.  We have
solved the problem with $h=2^{-8}, 2^{-9}, 2^{-10}, 2^{-11}$ using one, two,
three, and four levels (where appropriate) using the strategy
described in  Section~\ref{ssec:metrics}.

\begin{figure}[!ht]
%\begin{center}
        \includegraphics[width=5.5in]{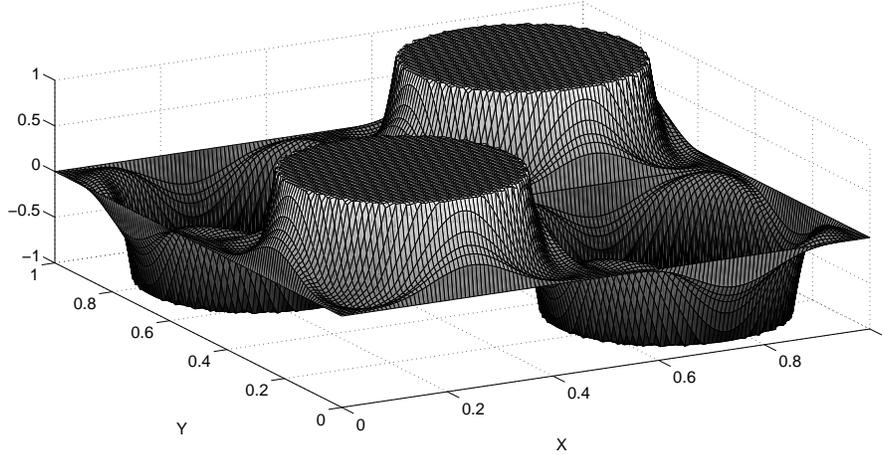}
\caption{Solution with $f$ satisfying $\Delta f=\frac{3}{2} \sin(2 \pi x)\: \sin(2 \pi y)$, $\beta=10^{-6}$, $[\underline{u},\overline{u}]=[-1,1]$.}
\label{fig:numerical_sol}
%\end{center}
\end{figure}

\begin{figure}[!th]
%\begin{center}
        \includegraphics[width=6.5in]{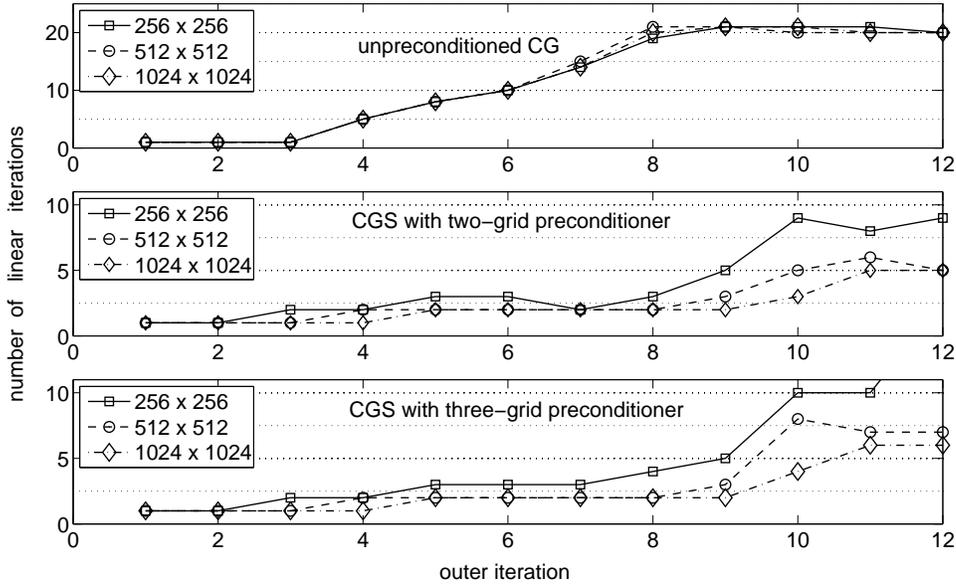}
\caption{Number of iterations for each of the predictor-step linear systems solved \textnormal{(}$\beta = 10^{-6}$\textnormal{)}.}
\label{fig:lin_iterations}
%\end{center}
\end{figure}

As with the previous example, we show in
Figure~\ref{fig:lin_iterations} the number of iterations required by
each of the linear systems at the predictor step. The top plot shows
the number of unpreconditioned iterations to level off at 21. The
middle  plot again shows two facts:  the number of
MG-preconditioned CGS iterations decreases with $h\downarrow 0$. In
addition, for the finest grid,  the number of two-grid preconditioned
CGS iterations is less than $1/4$ that of the number of iterations for the unpreconditioned
case even when looking beyond  the eighth outer iteration, where as
before, roughness of the $\lambda$-function lowers the quality of the
MG-algorithm. For example, for the tenth outer iteration with
$h=1/1024$, CGS required $3$ iterations, while $21$ iterations
were needed in the unpreconditioned case; for the  eleventh outer iteration the numbers are $5$
vs.~$20$. Of course, each two-grid preconditioned CGS iteration is significantly more
expensive than an unpreconditioned CG iteration, which is why the number of levels
should be maximized. The bottom plot in
Figure~\ref{fig:lin_iterations} shows that the two-level  behavior is
replicated using three-level preconditioners. Moreover, {\bf at fine
resolutions} (\mbox{$h\le 2^{-10}$}), the numbers  of required three-level preconditioned CGS iterations are
not significantly higher than those of two-level CGS
iterations. However, this is not the case for low resolution
$h=2^{-8}$, where the number of three-level CGS iterations required for
the last three systems (not shown on the plot) is quite large: $45,
17, 58$. This is why we insist that the  MG-preconditioner is
efficient only when the coarsest resolution used is sufficiently fine
and the finest resolution $h$ is  small. With respect to the second
metric, we show in Table~\ref{tab:finemat-vecs_lap} the total number of
fine mat-vecs for each of the runs, and the results confirm that
the MG-preconditioner becomes increasingly efficient with $h\downarrow 0$.
Recall that for this application a mat-vec requires solving the Poisson equation.

\appendix
\section{Some facts about the spectral distance}
\label{sec:spec_dist}
Throughout this section $(\op{X}, \innprd{\cdot}{\cdot})$ is a real, finite
dimensional Hilbert space with norm $\nnorm{\cdot}$.
All operators in this section are assumed to be in $\op{L}_+(\op{X})$ (see Section~\ref{ssec:tganalysis} for definition)
unless otherwise specified.
\noindent The following inequalities were proved in~\cite{MR2429872}  (Lemma 3.2):
% LEMMA - log ineq
\begin{lemma}
\label{lma:log_ineq}
If $\alpha\in(0,1)$ and $z\in {\mathcal B}_{\alpha}(1)$, then
\begin{equation}
\label{eq:log_ineq}
\frac{\ln(1+\alpha)}{\alpha}|1-z| \le |\ln z| \le
\frac{|\ln(1-\alpha)|}{\alpha} |1-z|\ .
\end{equation}
For $\abs{\ln z}\le \delta$ we have
\begin{equation}
\label{eq:log_ineq2}
\frac{1-e^{-\delta}}{\delta} |\ln z|\le |1-z| \le \frac{e^{\delta}-1}{\delta} |\ln z| .
\end{equation}
\end{lemma}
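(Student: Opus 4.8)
My plan is to derive everything from the integral representation of the principal logarithm along the straight segment joining $1$ to $z$. For $z\in\mathcal{B}_{\alpha}(1)$ with $\alpha\in(0,1)$ that segment lies entirely in $\{\,w:\mathrm{Re}\,w>1-\alpha>0\,\}$, hence avoids the cut $(-\infty,0]$; writing $\zeta:=z-1$ we then have
\[
\ln z=\int_0^1\frac{\zeta\,dt}{1+t\zeta}\ ,\qquad\text{hence}\qquad
|\ln z|=|1-z|\cdot\Bigl|\int_0^1\frac{dt}{1+t\zeta}\Bigr|\ .
\]
Likewise $e^{\ln z}-1=\ln z\int_0^1 e^{t\ln z}\,dt$ for every $z$, so $|1-z|=|\ln z|\cdot\bigl|\int_0^1 e^{t\ln z}\,dt\bigr|$. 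Both \eqref{eq:log_ineq} and \eqref{eq:log_ineq2} will come from comparing these integrals against real scalar integrals.

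For \eqref{eq:log_ineq}, the upper bound follows from the reverse triangle inequality $|1+t\zeta|\ge 1-t|\zeta|>1-t\alpha>0$, which gives
\[
|\ln z|\le |1-z|\int_0^1\frac{dt}{1-t|\zeta|}\le|1-z|\int_0^1\frac{dt}{1-t\alpha}=\frac{|\ln(1-\alpha)|}{\alpha}\,|1-z|\ ,
\]
the middle step being just $|\zeta|<\alpha$ together with monotonicity of $r\mapsto(1-tr)^{-1}$. For the lower bound I would pass to real parts,
\[
\Bigl|\int_0^1\frac{dt}{1+t\zeta}\Bigr|\ \ge\ \mathrm{Re}\int_0^1\frac{dt}{1+t\zeta}\ =\ \int_0^1\frac{1+t\,\mathrm{Re}\,\zeta}{|1+t\zeta|^2}\,dt\ ,
\]
and then invoke the elementary pointwise estimate $\dfrac{1+t\,\mathrm{Re}\,\zeta}{|1+t\zeta|^2}\ge\dfrac{1}{1+t|\zeta|}$, which after clearing denominators is exactly $t\,(|\zeta|-\mathrm{Re}\,\zeta)\,(1-t|\zeta|)\ge0$ and holds since $|\zeta|<\alpha<1$; integrating and again using $|\zeta|<\alpha$ yields $|\ln z|\ge\frac{\ln(1+\alpha)}{\alpha}|1-z|$. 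This uses only monotonicity of $r\mapsto(1\pm tr)^{-1}$, no fact about $\frac{\ln(1\pm r)}{r}$.

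For \eqref{eq:log_ineq2} I would first record the sharp form of the above obtained by letting $\alpha\downarrow|\zeta|$ (equivalently, running the two estimates with $\alpha$ replaced by $|\zeta|$), valid whenever $|1-z|<1$:
\[
\ln\!\bigl(1+|1-z|\bigr)\ \le\ |\ln z|\ \le\ -\ln\!\bigl(1-|1-z|\bigr)\ .
\]
The right-hand inequality rearranges to $|1-z|\ge 1-e^{-|\ln z|}$; since $r\mapsto\frac{1-e^{-r}}{r}=\int_0^1 e^{-tr}\,dt$ is nonincreasing, under $|\ln z|\le\delta$ this gives the lower bound in \eqref{eq:log_ineq2}, while the excluded case $|1-z|\ge1$ is immediate because then $|1-z|\ge1>1-e^{-\delta}=\frac{1-e^{-\delta}}{\delta}\,\delta\ge\frac{1-e^{-\delta}}{\delta}|\ln z|$. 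For the upper bound in \eqref{eq:log_ineq2} I would use the exponential representation directly: $|1-z|=|\ln z|\,\bigl|\int_0^1 e^{t\ln z}\,dt\bigr|\le|\ln z|\int_0^1 e^{t\,\mathrm{Re}\,\ln z}\,dt\le|\ln z|\int_0^1 e^{t\delta}\,dt=\frac{e^{\delta}-1}{\delta}|\ln z|$, using $\mathrm{Re}\,\ln z\le|\ln z|\le\delta$.

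I do not anticipate a serious obstacle; the argument is a chain of one-line estimates. The points that need attention are: (i) proving the pointwise inequality behind the lower bound of \eqref{eq:log_ineq} by reducing it to the manifestly nonnegative $t(|\zeta|-\mathrm{Re}\,\zeta)(1-t|\zeta|)$; (ii) realizing that \eqref{eq:log_ineq2} is cleanest when its two bounds are extracted from \emph{different} representations — inverting the $\ln$-estimate for the lower bound, using the $\exp$-representation for the upper one — since a direct real-part argument on the $\exp$-side would lose a spurious factor $\cos(\delta/2)$; and (iii) remembering that the hypothesis $\alpha<1$ is exactly what guarantees the segment $[1,z]$ misses the branch cut, so that the integral representation of $\ln z$ is legitimate.
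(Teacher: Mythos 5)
Your proof is correct. Note that the paper itself does not prove this lemma --- it simply cites Lemma~3.2 of \cite{MR2429872} --- so there is no in-paper argument to compare against; your derivation from the integral representations $\ln z=\int_0^1\frac{\zeta\,dt}{1+t\zeta}$ and $z-1=\ln z\int_0^1 e^{t\ln z}\,dt$ is a valid self-contained substitute. I checked the two points that carry the weight: the pointwise inequality $\frac{1+t\,\mathrm{Re}\,\zeta}{|1+t\zeta|^2}\ge\frac{1}{1+t|\zeta|}$ does reduce, after clearing the (positive) denominators, to $t\,(|\zeta|-\mathrm{Re}\,\zeta)(1-t|\zeta|)\ge 0$; and the lower bound in \eqref{eq:log_ineq2} is correctly obtained by inverting the sharp upper bound $|\ln z|\le-\ln(1-|1-z|)$ together with the monotonicity of $r\mapsto\int_0^1 e^{-tr}\,dt$, with the case $|1-z|\ge 1$ handled separately as you do. The only cosmetic slip is the strict inequality $1-t|\zeta|>1-t\alpha$ at $t=0$, which is an equality there; this does not affect the integrated estimates.
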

%
% LEMMA
%
\begin{lemma}
\label{lma:dist_ctrl_radius}
Let $L, G\in \op{L}_+(\op{X})$ such that 
$$\min\left(d_{\sigma}(L^{-1}, G), d_{\sigma}(L, G^{-1})\right)\le \delta\ .$$ 
Then
\begin{equation}
\label{eq:spec_rad_vs_spec_dist}
\rho(I-L G)\le \frac{e^{\delta}-1}{\delta} \min\left(d_{\sigma}(L^{-1}, G), d_{\sigma}(L, G^{-1})\right)\ .
\end{equation}
\end{lemma}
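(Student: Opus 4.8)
\emph{Proof proposal.} The plan is to bound $\rho(I-LG)$ by locating the spectrum of $LG$ inside the joined numerical ranges that define the two spectral distances, and then to convert a bound on $\abs{1-z}$ over such a range into the claimed estimate by means of the elementary inequality \eqref{eq:log_ineq2}.

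First I would carry out the spectral localization. Pick $\mu\in\sigma(LG)$ together with an eigenvector $v\ne 0$ in the complexification of $\op{X}$, so $LGv=\mu v$. Multiplying on the left by $L^{-1}$ gives $Gv=\mu L^{-1}v$; taking the inner product with $v$ and noting that $\innprd{L^{-1}v}{v}$ has strictly positive real part (so it cannot vanish) yields
$$\mu=\frac{\innprd{Gv}{v}}{\innprd{L^{-1}v}{v}}\in W(G,L^{-1}).$$
Setting $u=Gv\ne 0$ and rewriting the eigenvalue equation as $Lu=\mu G^{-1}u$, the identical manipulation gives $\mu\in W(L,G^{-1})$. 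Hence $\sigma(LG)\subseteq W(G,L^{-1})\cap W(L,G^{-1})$, and since $\rho(I-LG)=\max_{\mu\in\sigma(LG)}\abs{1-\mu}$, we obtain $\rho(I-LG)\le\sup\{\abs{1-z}:z\in W\}$ for $W$ taken to be either of those two numerical ranges. Because $L,G$, and therefore also $L^{-1},G^{-1}$, have positive definite symmetric part, all the Rayleigh quotients occurring here lie in the open right half-plane, so each of these numerical ranges is contained in $\mathbb{C}\setminus(-\infty,0]$ and $\ln$ is well defined on it (this is the setting of~\cite{MR2429872}).

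Next I would connect the numerical range to the spectral distance. Directly from the definition of $W(\cdot,\cdot)$, the sets $W(G,L^{-1})$ and $W(L^{-1},G)$ are reciprocals of one another, and $\abs{\ln z}=\abs{\ln(1/z)}$; hence $\sup\{\abs{\ln z}:z\in W(G,L^{-1})\}=d_{\sigma}(L^{-1},G)$, and likewise $\sup\{\abs{\ln z}:z\in W(L,G^{-1})\}=d_{\sigma}(L,G^{-1})$. Now set $m=\min\bigl(d_{\sigma}(L^{-1},G),d_{\sigma}(L,G^{-1})\bigr)$ and assume, without loss of generality, that $m=d_{\sigma}(L^{-1},G)$ (the other case is symmetric, with $W(L,G^{-1})$ replacing $W(G,L^{-1})$). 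For each $\mu\in\sigma(LG)\subseteq W(G,L^{-1})$ we then have $\abs{\ln\mu}\le d_{\sigma}(L^{-1},G)=m\le\delta$, so Lemma~\ref{lma:log_ineq}, inequality \eqref{eq:log_ineq2}, gives $\abs{1-\mu}\le\frac{e^{\delta}-1}{\delta}\abs{\ln\mu}\le\frac{e^{\delta}-1}{\delta}\,m$; taking the maximum over $\mu\in\sigma(LG)$ produces \eqref{eq:spec_rad_vs_spec_dist}.

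I expect the only delicate point to be the first step: since $L$ and $G$ need not be symmetric, one must work with eigenvectors in the complexified space and with the complexified joined numerical ranges, and must verify that these ranges stay off the branch cut $(-\infty,0]$ so that the logarithmic inequality applies; everything after that is routine bookkeeping with reciprocals and the definition of $d_{\sigma}$.
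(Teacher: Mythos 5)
Your proposal is correct and follows essentially the same route as the paper's proof: localize the spectrum of $LG$ (equivalently, of $I-LG$) inside the joined numerical ranges $W(G,L^{-1})$ and $W(L,G^{-1})$ via the eigenvector manipulations $Gv=\mu L^{-1}v$ and $Lu=\mu G^{-1}u$, then convert $\abs{\ln\mu}\le\delta$ into a bound on $\abs{1-\mu}$ using \eqref{eq:log_ineq2}. Your explicit remarks on the reciprocal symmetry of the numerical ranges and the nonvanishing of the denominators are points the paper leaves implicit, but the argument is the same.
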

%
% BEGIN PROOF
\begin{proof}
If $\lambda\in\sigma(I-L G)$ then there exists a unit vector $u\in \op{X}^{\mathbb{C}}$
such that $(I-L G)u = \lambda u$, therefore
\begin{eqnarray}
\label{eq:srd0}
(1-\lambda)u = L G u\ .
\end{eqnarray}
After left-multiplying  with $L^{-1}$ and taking the inner product with $u$ we obtain
$$(1-\lambda)\innprd{L^{-1}u}{u} = \innprd{G u}{u},\ \ \mathrm{therefore}\ \ 
\lambda = 1-\frac{\innprd{G u}{u}}{\innprd{L^{-1} u}{u}}\ .$$
If we substitute $v=G^{-1}u$ in~\eqref{eq:srd0} and take the inner product with $v$ we have
$$
(1-\lambda)G^{-1}v = L v\ ,\ \ \mathrm{therefore}\ \lambda = 1-\frac{\innprd{L v}{v}}{\innprd{G^{-1} v}{v}}\ .
$$
Hence, if $d_{\sigma}(L^{-1}, G)\le \delta$, then
\begin{eqnarray*}
\rho(I-L G)&\le& \sup\{|1-z|\ :\ z = \innprd{G u}{u}/\innprd{L^{-1} u}{u}
\ \mathrm{for\ some\ }u\in \op{X}^{\mathbb{C}}\setminus\{0\}\}\\
&\stackrel{\eqref{eq:log_ineq2}}{\le}& 
\frac{e^{\delta}-1}{\delta} d_{\sigma}(L^{-1}, G)\ .
\end{eqnarray*}
Instead, if $d_{\sigma}(L, G^{-1})\le \delta$, then
\begin{eqnarray*}
\rho(I-L G)&\le& \sup\{|1-z|\ :\ z = \innprd{L u}{u}/\innprd{G^{-1} u}{u}
\ \mathrm{for\ some\ }u\in \op{X}^{\mathbb{C}}\setminus\{0\}\}\\
&\stackrel{\eqref{eq:log_ineq2}}{\le}& 
\frac{e^{\delta}-1}{\delta} d_{\sigma}(L, G^{-1})\ .
\end{eqnarray*}
which proves~\eqref{eq:spec_rad_vs_spec_dist}.
\end{proof}\\
%
% BEGIN PROOF
%
% THE REST IS OUT
\begin{comment}
We now give the proof of  Lemma~\ref{lma:norm_dist}:
\begin{proof}
(a) For $u\in\op{X}^{\mathbb{C}},\ \nnorm{u}=1$ we have
\begin{eqnarray*}
\Abs{\innprd{G u}{u}}\ge \op{Real}{\innprd{G u}{u}} = \innprd{G_s u}{u} \ge \min_{\nnorm{u}=1} \innprd{G_s u}{u} = \nnorm{G_s^{-1}}^{-1}\ ,
\end{eqnarray*}
so 
\begin{equation}
\label{eq:ineq1}
\Abs{\frac{\innprd{N u}{u}}{\innprd{G u}{u}} - 1} \le \frac{{\nnorm{N-G}}}{\Abs{\innprd{G u}{u}}}\le
\nnorm{G_s^{-1}}\cdot {\nnorm{N-G}}\ .
\end{equation}
Hence  $\Abs{\frac{\innprd{N u}{u}}{\innprd{G u}{u}} - 1}\le\alpha$, and
\begin{eqnarray*}
\Abs{\ln\frac{\innprd{N u}{u}}{\innprd{G u}{u}}} \stackrel{\eqref{eq:log_ineq}}{\le}\frac{|\ln(1-\alpha)|}{\alpha}\Abs{\frac{\innprd{N u}{u}}{\innprd{G u}{u}} - 1}
\stackrel{\eqref{eq:ineq1}}{\le}\frac{|\ln(1-\alpha)|}{\alpha}\nnorm{G_s^{-1}}^{-1}\cdot {\nnorm{N-G}}\ .
\end{eqnarray*}
(b) Similar arguments lead to 
\begin{eqnarray*}
d_{\sigma}(L, G^{-1})  &=& d_{\sigma}(G^{\frac{1}{2}} L G^{\frac{1}{2}}, I) \le\frac{\abs{\ln (1-\alpha)}}{\alpha} \nnorm{G^{\frac{1}{2}} L G^{\frac{1}{2}} - I}\\
&\le&\frac{\abs{\ln (1-\alpha)}}{\alpha} \nnorm{G^{-\frac{1}{2}}} \cdot \nnorm{G L - I}\cdot \nnorm{ G^{\frac{1}{2}}}\ ,
\end{eqnarray*}
and the conclusion follows.
\end{proof} \\
\end{comment}
%We should point out that the above estimates may not be very accurate in certain situations. For further
%results on the spectral distance please consult~\cite{MR2429872}.

\bibliography{ref,ref_ipm}
\bibliographystyle{siam}

\end{document}